\newtheorem{Theorem}{Theorem}[section]
\newtheorem{Lemma}[Theorem]{Lemma}
\theoremstyle{definition}
\definecolor{rp}{RGB}{83,54,106}
\def\boxit#1{\vbox{\hrule\hbox{\vrule\kern6pt\vbox{\kern6pt#1\kern6pt}\kern6pt\vrule}\hrule}}
\begin{document}
\begin{frontmatter}
\title{Sharp detection boundaries on testing dense subhypergraph}
\runtitle{Sharp detection boundaries on testing dense subhypergraph}
\runauthor{M. Yuan and Z. Shang}
\begin{aug}
\author[A]{\fnms{Mingao} \snm{Yuan}\ead[label=e1]{mingao.yuan@ndsu.edu}}
\and
\author[B]{\fnms{Zuofeng} \snm{Shang}\ead[label=e2]{zshang@njit.edu}}
\address[A]{Department of Statistics,
North Dakota State University,
\printead{e1}}

\address[B]{Department of Mathematical Sciences,
New Jersey Institute of Technology,
\printead{e2}}
\end{aug}

\begin{abstract}
We study the problem of testing the existence of a dense subhypergraph.
The null hypothesis is an Erd\"{o}s-R\'{e}nyi uniform random hypergraph
and the alternative hypothesis is a uniform random hypergraph
that contains a dense subhypergraph. 
We establish sharp detection boundaries in both scenarios: (1) the edge probabilities are known; (2) the edge probabilities are unknown. 
In both scenarios, sharp detectable boundaries are characterized by 
the appropriate model parameters. 
Asymptotically powerful tests are provided when the model parameters 
fall in the detectable regions. Our results indicate that the detectable regions for general hypergraph models are dramatically different from their
graph counterparts.
\end{abstract}

\begin{keyword}[class=MSC2020]
\kwd[Primary ]{62G10}
\kwd[; secondary ]{05C80}
\end{keyword}

\begin{keyword}
\kwd{sharp detection boundary}
\kwd{uniform hypergraph}
\kwd{dense subhypergraph detection}
\kwd{asymptotically powerful test}
\end{keyword}

\end{frontmatter}

\section{Introduction}
\label{S:1}
Suppose we observe an \textit{undirected} $m$-uniform hypergraph  $(\mathcal{V},\mathcal{E})$ on $N:=|\mathcal{V}|$ vertices with an edge set $\mathcal{E}$. Each edge in $\mathcal{E}$ consists of exactly $m$ vertices. 
In particular, $m=2$ degenerates to ordinary graphs. Without loss of generality, denote $\mathcal{V}=[N]:=\{1,2,\ldots,N\}$.
The corresponding adjacency tensor is an $m$-dimensional 0-1 symmetric array $A\in(\{0,1\}^N)^{\otimes m}$ satisfying $A_{i_1i_2\ldots i_m}=1$ if and only if $\{i_1,i_2,\ldots, i_m\}\in \mathcal{E}$.
By symmetry we mean that $A_{i_1i_2\ldots i_m}=A_{j_1j_2\ldots j_m}$ whenever $i_1,i_2,\ldots,i_m$ is a permutation of $j_1,j_2,\ldots,j_m$. 
Given $A$, we are interested in testing the existence of 
a dense subhypergraph, which can be formulated as a hypothesis testing problem:
\begin{equation}\label{hypothesis}
H_0: A\sim \mathcal{H}_m(N,p_0)\,\,\,\,\text{vs.}\,\,\,\, H_1: A\sim \mathcal{H}_m(N,p_0; n,p_1),
\end{equation}
where $0<p_0<p_1\le 1$ are known.
The null hypothesis in (\ref{hypothesis}) says that $A$ follows an
Erd\"{o}s-R\'{e}nyi $m$-uniform random hypergraph with edge rate $p_0$. Equivalently, $A_{i_1i_2\ldots i_m}$ for $1\leq i_1<i_2<\cdots <i_m\leq N$ are independent and identically distributed 
Bernoulli variables with $\mathbb{P}(A_{i_1i_2\ldots i_m}=1)=p_0$. The alternative hypothesis in (\ref{hypothesis})
says that $A_{i_1i_2\ldots i_m}$ for $1\leq i_1<i_2<\cdots <i_m\leq N$ are independent Bernoulli variables and
there exists a subset $S\subset\mathcal{V}$ with $|S|=n$ such that $\mathbb{P}(A_{i_1i_2\ldots i_m}=1)=p_1$ if the distinct vertices $i_1,i_2,\ldots,i_m$ all belong to $S$, and $\mathbb{P}(A_{i_1i_2\ldots i_m}=1)=p_0$ otherwise. 
The assumption $p_1>p_0$ implies that the vertices within $S$ are more possibly connected. When $p_1=1$, $S$ is called the hypergraphic planted clique (HPC),
and problem (\ref{hypothesis}) becomes testing the existence of HPC.
We borrow the terminology HPC from \cite{LZ20}.
When $m=2$, Arias-Castro and Verzelen \cite{AV14} established sharp detection boundaries for testing (\ref{hypothesis}). For general $m$, Bollob\'{a}s and Erd\"{o}s \cite{BE76} considered testing HPC and provided a sufficient condition in terms of $n,N,m,p_0$ under which the exhaustive search algorithm is successful.
However, it remains unknown whether sharp detection boundaries for testing
(\ref{hypothesis}) still exist for arbitrary $m$.

In this paper, we positively answer the above question and provide sharp
detection boundaries for testing the problem (\ref{hypothesis}) for arbitrary $m\ge2$.
A statistical test $T$ for testing (\ref{hypothesis}) is a 0-1 valued function of the observed adjacency tensor $A$ satisfying $T=1$ if and only if $H_0$ is rejected. The risk of $T$ is defined as
\[
\gamma_N(T)=\mathbb{P}_0(T=1)+\max_{|S|=n}\mathbb{P}_S(T=0),
\]
where $\mathbb{P}_0$ and $\mathbb{P}_S$ denote the probability measures
under $H_0$ and $H_1$, respectively.
A test $T$ is said to be asymptotically powerful (or asymptotically powerless) if 
$\gamma_N(T)\to0$ (or $\gamma_N(T)\to1$).
If an asymptotically powerful test exists, we say that the dense subhypergraph is detectable; otherwise, it is undetectable.
As an initial stage, we show that the sufficient condition provided in
\cite{BE76} is also necessary (see Theorem \ref{clique0}). 
As a byproduct, we propose the hypergraphic clique number test (HCNT) 
for testing HPC which is proven asymptotically powerful. 
We next consider the problem (\ref{hypothesis}) for general $0<p_0<p_1<1$ and derive sharp detection boundary in terms of $n,N,m,p_0,p_1$
(see Theorems \ref{thm:1} and \ref{thm:2}).
We propose either the hypergrahic total degree test (HTDT) or
hypergraphic scan test (HST) both being proven asymptotically powerful. See Table \ref{p0known} 
for a summary of our results for testing (\ref{hypothesis}) in the two regimes $p_1=1$ and $p_1\in(0,1)$, including the corresponding asymptotically powerful tests if they exist, in which
$H_{p}(q)$ is the Kullback-Leibler divergence from $\text{Bern}(q)$ to $\text{Bern}(p)$ defined as
$H_{p}(q)=q\log\frac{q}{p}+(1-q)\log\frac{1-q}{1-p}$, for $p, q\in (0,1)$.
\begin{table}[H]
\vspace{-2mm}
	\centering
	\begin{tabular}{|p{2cm} p{6cm} p{2cm} p{4cm}|}\hline
	 &  & Detectibility & Asymptotically powerful test \\
	\hline
	\multirow{ 2}{*}{$0<p_0<p_1=1$}&$n< \left(m!\log_{\frac{1}{p_0}}N\right)^{\frac{1}{m-1}}$&Undetectable&None\\
	&$n>\left(m!\log_{\frac{1}{p_0}}N\right)^{\frac{1}{m-1}}$&Detectable&HCNT\\ \hline
	\multirow{ 3}{*}{$0<p_0<p_1<1$}&$\frac{p_1-p_0}{\sqrt{p_0}}\ll \left(\frac{N}{n^2}\right)^{\frac{m}{2}}$ and $H_{p_0}(p_1)<\frac{m!\log\frac{N}{n}}{n^{m-1}}$ & Undetectable        &  None \\
 &$\frac{p_1-p_0}{\sqrt{p_0}}\gg \left(\frac{N}{n^2}\right)^{\frac{m}{2}}$    &	 Detectable         & HTDT\\
	 &$H_{p_0}(p_1)>\frac{m!\log\frac{N}{n}}{n^{m-1}}$    & Detectable         & HST\\
	\hline
\end{tabular}
	\caption{\it Detection boundaries for testing (\ref{hypothesis}) when $p_0, p_1$ are known and the corresponding asymptotically powerful tests.}	
\label{p0known}
\vspace{-5mm}
\end{table}

As a second stage, assume that $p_0$ and $p_1$ are both unknown, and so
the hypotheses in (\ref{hypothesis}) become composite.
Specifically, suppose there exist $\mathcal{C}_0\subset(0,1)$ and $\mathcal{C}_1\subset(0,1)^2$ such that $p_0\in\mathcal{C}_0$ in $H_0$ and
$(p_0,p_1)\in\mathcal{C}_1$ in $H_1$.
In this case, the risk of a test $T$ becomes
\begin{equation}\label{new:risk}
\gamma_N^*(T)=\sup_{p_0\in\mathcal{C}_0}\mathbb{P}_0(T=1)+
\sup_{(p_0,p_1)\in\mathcal{C}_1}\max_{|S|=n}\mathbb{P}_S(T=0).
\end{equation}
A test $T$ is said to be asymptotically powerful (or asymptotically powerless) if 
$\gamma_N^*(T)\to0$ (or $\gamma_N^*(T)\to1$).
Let 
\begin{equation}\label{p0:prime}
p_0^\prime=\frac{N^{(m)}p_0-n^{(m)}p_1}{N^{(m)}-n^{(m)}},
\end{equation}
where $N^{(m)}:={N\choose m}$.
We derive sharp detection boundary for testing (\ref{hypothesis}) which are characterized
in terms of $n,N,m,p_0',p_1$
(see Theorems \ref{thm:4} and \ref{thm:6})
We also propose hypergraphic scan test (HST), hypergraphic loose 2-path test (HL2-PT) or hypergraphic tight 2-path test (HT2-PT) for detecting the dense subhypergraph which are proven asymptotically powerful. See Table \ref{p0unknown} for a summary of our results including the corresponding asymptotically powerful tests.

\begin{table}[H]
\vspace{-2mm}
	\centering
	\begin{tabular}{|p{6.5cm} p{2cm} p{4cm}|}
	\hline
	  & Detectibility & Asymptotically powerful test \\
	\hline
	$\frac{p_1-p_0^\prime}{\sqrt{p_0^\prime}}\ll \left(\frac{N}{n^2}\right)^{\frac{m+1}{4}}$ and $H_{p_0^\prime}(p_1)<\frac{m!\log\frac{N}{n}}{n^{m-1}}$ & Undetectable        &  None\\
 $\frac{p_1-p_0^\prime}{\sqrt{p_0^\prime}}\gg \left(\frac{N}{n^2}\right)^{\frac{m+1}{4}}$ and $n^2\succ N$   &	 Detectable         &  HL2-PT\\
 $\frac{p_1-p_0^\prime}{\sqrt{p_0^\prime}}\gg \left(\frac{N}{n^2}\right)^{\frac{m+1}{4}}$ and $n^2\ll N$   &	 Detectable         & HT2-PT\\
	 $H_{p_0^\prime}(p_1)>\frac{m!\log\frac{N}{n}}{n^{m-1}}$    & Detectable         & HST\\
	\hline
\end{tabular}
	\caption{\it Detection boundary for testing (\ref{hypothesis}) when $p_0, p_1$ are unknown and the corresponding asymptotically powerful tests.}	
\label{p0unknown}
\vspace{-5mm}
\end{table}
An interesting byproduct is the dramatic difference between testing 
dense subgraph and testing dense subhypergraph as displayed in Figures
\ref{HPCphase} to \ref{unknownHperphase}. For instance, testing HPC
seems easier than testing PC since the former has smaller undetectable regions than the latter; see Figure \ref{HPCphase}.
When $n\succ\sqrt{N}$, testing general dense subhypergraph seems easier than 
testing dense subgraph; see Figures \ref{subHperphase} and \ref{unknownsubHperphase}. However, the two testing problems are no longer comparable when $n=o(\sqrt{N})$; see Figures \ref{Hperphase} and \ref{unknownHperphase}.

This work is among the recent surge in theoretical study of hypergraph models 
primarily from statistical perspectives. 
Other developments in this field are summarized below.
Assuming the existence of a dense subhypergraph,
\cite{LJY15, K11,ZHS06, BGK17,HWC17, CDKKR18,LZ20} proposed 
various detection algorithms. 
In stochastic block models,
\cite{GD14,GD17,ACKZ15,B93,CK10,KBG17,LCW17,R09,RP13,KSX20,FP16,ALS16,ALS18} proposed various algorithms for detecting the underlying communities, and
\cite{YLFS20} established sharp phase transition phenomenon for 
testing the existence of the communities. 
The readers are referred
to the survey paper \cite{BTYZQ21} for more
references. 

\section{Detection of HPC}\label{clique}

Consider the problem of detecting HPC, i.e., $p_1=1$ in (\ref{hypothesis}). 
Define the hypergraphic clique number test (HCNT) as 
$\omega_N=\max\{0\leq k\leq N: V\subset[N],W_V=k^{(m)},|V|=k\}$, where
$W_V:=\sum_{\substack{i_1,\ldots,i_m\in V\\ i_1<\cdots<i_m}} A_{i_1i_2\ldots i_m}$ is the number of edges in the subhypergraph restricted to $V$.
The following theorem provides a sharp detection boundary characterized by $n,N,m,p_0$.

\begin{Theorem}\label{clique0}
Let $m\ge2$. The following results hold.
\begin{enumerate}[label={(\Roman*)},itemindent=1em]
\item 
All tests are asymptotically powerless if for a constant $\epsilon\in(0,1)$,
\[
n\leq \left[m!(1-\epsilon)\log_{\frac{1}{p_0}}N\right]^{\frac{1}{m-1}}.
\]\label{thm:1:I} 
\item  The HCNT is asymptotically powerful if for a constant $\epsilon\in(0,1)$,
\begin{equation}\label{BE:cond}
n\geq \left[m!(1+\epsilon)\log_{\frac{1}{p_0}}N\right]^{\frac{1}{m-1}}.
\end{equation}
\label{thm:1:II}
\end{enumerate}
\end{Theorem}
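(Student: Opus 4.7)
The plan is to handle the two halves by dual moment methods: a first-moment (Markov) argument for the achievability Part (II), and a second-moment ($\chi^2$) argument for the impossibility Part (I).

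For Part (II), I would use the rejection rule $T := \mathbf{1}\{\omega_N \geq n\}$. Under $H_1$, the planted set $S$ forms a complete subhypergraph with probability one (every potential edge in $S$ has probability $p_1=1$), so $\omega_N \geq n$ and $T = 1$ almost surely. Under $H_0$, let $Y$ count the $n$-subsets $V \subset [N]$ with $W_V = \binom{n}{m}$; then $\mathbb{E}_0[Y] = \binom{N}{n}p_0^{\binom{n}{m}}$. Using $\log\binom{N}{n}\le n\log N$ and $\binom{n}{m}\sim n^m/m!$, the hypothesis $n^{m-1}\ge m!(1+\epsilon)\log_{1/p_0}N$ forces $\log\mathbb{E}_0[Y] \le -\epsilon n\log N(1+o(1)) \to -\infty$, so Markov's inequality yields $\mathbb{P}_0(T=1) \le \mathbb{E}_0[Y] \to 0$.

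For Part (I), place the uniform prior $\pi$ on $n$-subsets and set $\mathbb{Q}_1 := \mathbb{E}_{S\sim\pi}[\mathbb{P}_S]$. For every test $T$ one has
\[
\gamma_N(T) \ge 1 - \mathrm{TV}(\mathbb{P}_0,\mathbb{Q}_1) \ge 1 - \tfrac12\sqrt{\mathbb{E}_0[L^2]-1}, \qquad L := \frac{d\mathbb{Q}_1}{d\mathbb{P}_0},
\]
so it suffices to show $\mathbb{E}_0[L^2]\to 1$. Since $p_1 = 1$, the per-$S$ likelihood $\mathbb{P}_S/\mathbb{P}_0$ equals $p_0^{-\binom{n}{m}}$ when $S$ is a clique in $A$ and $0$ otherwise; averaging gives $L(A) = X(A)/[\binom{N}{n}p_0^{\binom{n}{m}}]$ with $X$ the number of $n$-cliques, and a direct expansion yields
\[
\mathbb{E}_0[L^2] = \mathbb{E}\!\left[p_0^{-\binom{K}{m}}\right],
\]
where $K$ is the hypergeometric intersection size of two independent uniform $n$-subsets of $[N]$. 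Split the expectation at $k = m$: the $k<m$ contribution equals $\mathbb{P}(K<m) \to 1$ because $\mathbb{E}[K] = n^2/N \to 0$. For $k \ge m$, the union bound $\mathbb{P}(K\ge k) \le \binom{n}{k}^2/\binom{N}{k} \le (en^2/N)^k/k!$ combined with $\binom{k}{m} \le k n^{m-1}/m!$ (valid for $k \le n$) bounds the $k$th summand by
\[
\frac{1}{k!}\exp\!\left(k\!\left[\log\frac{en^2}{N} + \frac{n^{m-1}}{m!}\log\frac{1}{p_0}\right]\right).
\]
Under $n^{m-1} \le m!(1-\epsilon)\log_{1/p_0}N$, the bracketed slope is at most $-\epsilon\log N + O(\log\log N) \le -\tfrac{\epsilon}{2}\log N$, so each term is $O(N^{-k\epsilon/2}/k!)$ and the tail sums to $o(1)$; combined with the $k<m$ piece this gives $\mathbb{E}_0[L^2] = 1 + o(1)$.

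The principal obstacle is that $\psi_k := \mathbb{P}(K=k)p_0^{-\binom{k}{m}}$ is not monotone in $k$---hypergeometric decay competes with super-polynomial likelihood inflation---so bounding $\psi_k$ only at the extremes will not close the sum. The resolution is to produce a \emph{linear-in-$k$} upper bound on $\log\psi_k$ by pairing the hypergeometric tail estimate with the elementary monotonicity $\binom{k}{m}/k \le \binom{n-1}{m-1}/m$ for $k\le n$; this collapses the problem to a single scalar condition on the exponent slope, which is strictly negative precisely under the $(1-\epsilon)$ gap in the hypothesis and matches the sharp boundary $n^{m-1} = m!\log_{1/p_0}N$.
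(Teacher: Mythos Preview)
Your proposal is correct and follows the same strategy as the paper: a first-moment (Markov) bound on the number of $n$-cliques for Part~(II), and the second-moment/$\chi^2$ computation $\mathbb{E}_0[L^2]=\mathbb{E}[p_0^{-\binom{K}{m}}]$ with the hypergeometric overlap $K$ for Part~(I). The one noteworthy difference is in how you control the tail sum $\sum_{k\ge m} p_0^{-\binom{k}{m}}\mathbb{P}(K\ge k)$. The paper keeps the $k$-dependence, writing the summand as $\exp\bigl(-k\,f(k)\bigr)$ with $f(k)=\log\frac{kN}{n^2}-\frac{\binom{k}{m}}{k}\log\frac{1}{p_0}$, and then argues that $f(k)\ge\min\{f(m),f(n)\}\to\infty$ by checking the two endpoints separately. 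You instead invoke the monotonicity $\binom{k}{m}/k\le\binom{n-1}{m-1}/m\le n^{m-1}/m!$ upfront, which collapses the exponent to a single $k$-independent slope $\log\frac{en^2}{N}+\frac{n^{m-1}}{m!}\log\frac{1}{p_0}\le -\epsilon\log N+O(\log\log N)$ and turns the tail into a geometric (in fact sub-exponential) series. Your route is a bit more direct and avoids the endpoint analysis of $f$; the paper's route retains slightly sharper $k$-dependence but needs the extra observation that $f$ is minimized at an endpoint. Both rely on the same implicit consequence of the hypothesis, namely $n=(\log N)^{O(1)}$ (so $\log n=O(\log\log N)$), which holds whenever $p_0$ is bounded away from $1$.
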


For Part \ref{thm:1:I}, one can show that $H_0$ and $H_1$ are asymptotically mutually contiguous so that all tests are asymptotically powerless.
For Part \ref{thm:1:II}, one can show that $\gamma_N(\omega_N)\to1$ so that HCNT is asymptotically powerful. 
Bollob\'{a}s and Erd\"{o}s \cite{BE76} showed that HPC can be detected through exhaustive search under (\ref{BE:cond}), so Part \ref{thm:1:II} is nothing but a restatement of their result in hypothesis testing framework.
Part \ref{thm:1:I} says that this condition is necessary to ensure the existence of a successful test. 

An interesting byproduct of Theorem \ref{clique0} is to provide an understanding on the relationship between testing PC ($m=2$) and HPC ($m>2$).
Figure \ref{HPCphase} demonstrates detection regions when $m=2,3$ and $p_0=1/2$. The green color indicates regions where PC or HPC are detectable,
and the red color indicates regions where they are undetectable.
Interestingly, $m=3$ yields a larger detectable region than $m=2$, displaying a dramatic difference between testing PC and testing HPC. 
\begin{figure}[htp] 
\vspace{-3mm}
\centering
\includegraphics[scale=0.39]{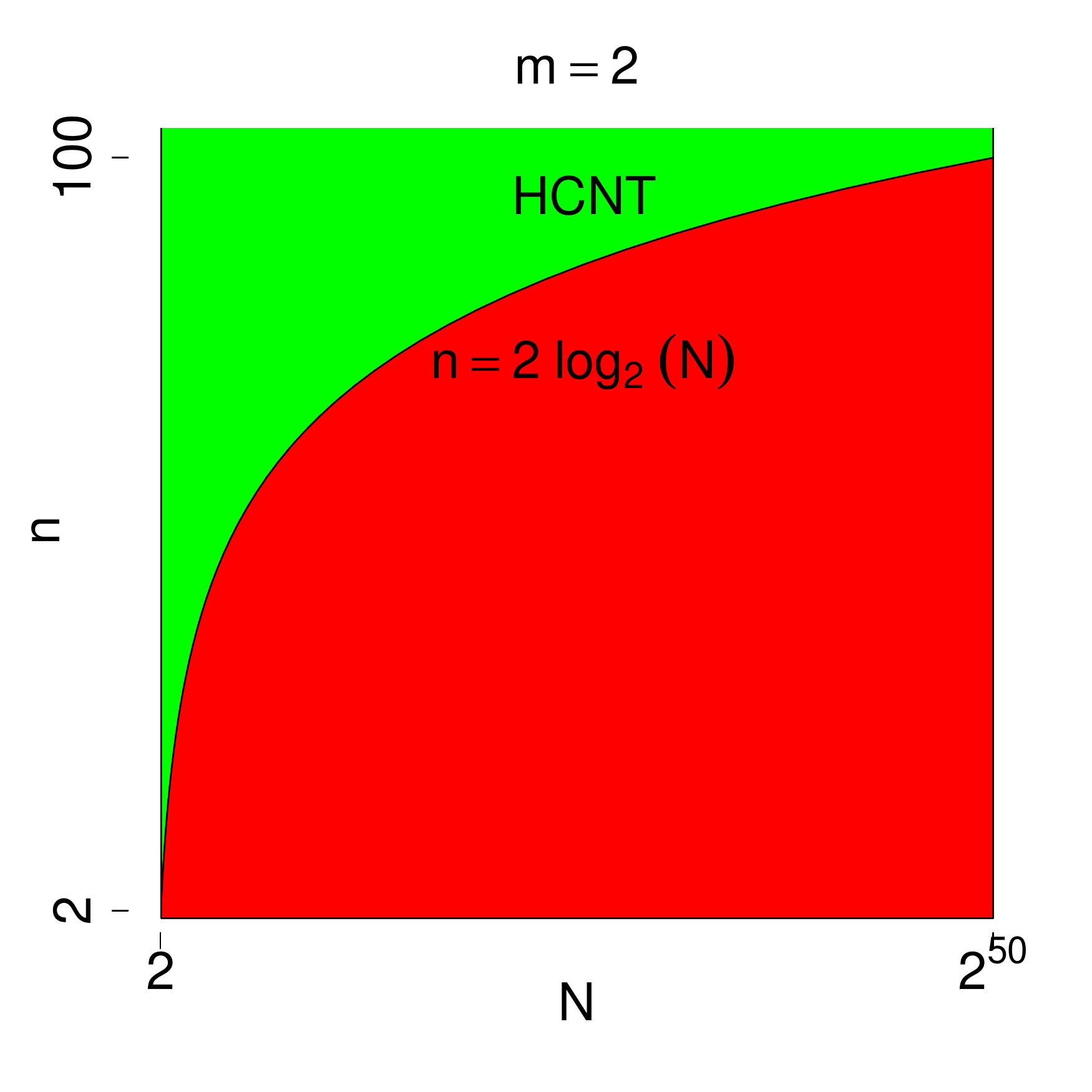}
\hspace{20mm}
\includegraphics[scale=0.4]{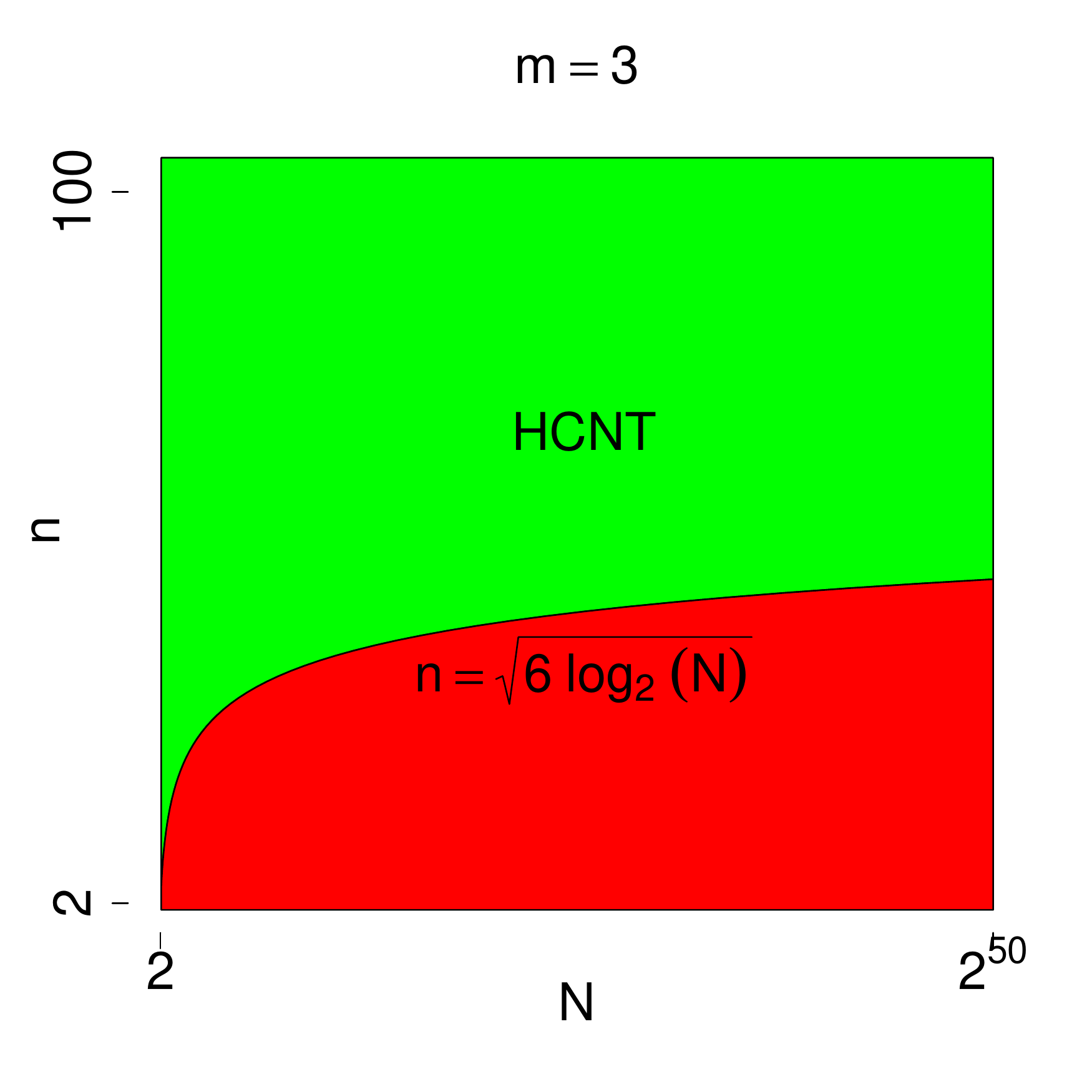}
\vspace{-4mm}
\caption{\it\small Detection boundary in $(N,n)$ for testing PC ($m=2$) and HPC ($m=3$) when $p_0=1/2$. Red: undetectable; green: detectable.}
\label{HPCphase}
\vspace{-5mm}
\end{figure}

\section{Detection of general dense subhypergraph with known $p_0$ and $p_1$}\label{subgraph}
Consider problem (\ref{hypothesis}) with known $p_0$ and $p_1$ satisfying $0<p_0<p_1<1$. Moreover, assume that 
\begin{equation}\label{cond:1}
 \frac{\log N}{n^{m-1}}=o(1), \hskip 1cm \log\left(1\vee \frac{1}{n^{m-1}p_0}\right)=o\left(\log\frac{N}{n}\right).
\end{equation}
Condition (\ref{cond:1}) requires $p_0$ and $p_1$ being suitably away from zero and $n$ being suitably large compared with $N$.
The following theorem provides a lower detection boundary characterized by $n,N,m,p_0,p_1$.
\begin{Theorem}\label{thm:1}
Let $m\geq 2$ and (\ref{cond:1}) hold. 
All tests are asymptotically powerless if
\begin{equation}\label{cond:2}
\frac{p_1-p_0}{\sqrt{p_0}}\left(\frac{n^2}{N}\right)^{\frac{m}{2}}=o(1),
\end{equation}
\begin{equation}\label{cond:3}
\limsup_{n,N\to\infty} \frac{(n-1)(n-2)\cdots(n-m+1)H_{p_0}(p_1)}{m!\log\frac{N}{n}}<1.
\end{equation}
\end{Theorem}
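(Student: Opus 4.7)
The plan is to apply Le Cam's two-point method in its Bayesian form. Place a uniform prior on the hidden subset $S$, let $\bar{\mathbb{P}}_1=\binom{N}{n}^{-1}\sum_{|S|=n}\mathbb{P}_S$ be the mixture alternative, and denote by $L$ its likelihood ratio with respect to $\mathbb{P}_0$. Since any test $T$ satisfies
\begin{equation*}
\gamma_N(T)\geq 1-\mathrm{TV}(\mathbb{P}_0,\bar{\mathbb{P}}_1)\geq 1-\tfrac{1}{2}\sqrt{\mathbb{E}_0[L^2]-1},
\end{equation*}
it is enough to prove $\mathbb{E}_0[L^2]=1+o(1)$ under conditions (\ref{cond:1})--(\ref{cond:3}).

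The second moment factorises edge-by-edge. For independent uniform $S_1,S_2\subset[N]$ with intersection $K=|S_1\cap S_2|$, edges contained in only one of the two subsets give factors whose $\mathbb{P}_0$-mean is $1$, while the $\binom{K}{m}$ edges contained in $S_1\cap S_2$ each contribute a factor $1+\lambda$ with $\lambda:=(p_1-p_0)^2/[p_0(1-p_0)]$; a direct calculation then yields
\begin{equation*}
\mathbb{E}_0[L^2]=\mathbb{E}_K\bigl[(1+\lambda)^{\binom{K}{m}}\bigr],
\end{equation*}
where $K$ is hypergeometric with mean $n^2/N$. I would then split this expectation at a threshold $k_\ast$ chosen slightly above $n^2/N\vee m$, and bound the two pieces separately.

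In the bulk range $K\leq k_\ast$ I would use $(1+\lambda)^{\binom{k}{m}}\leq\exp(\lambda k^m/m!)$, expand, and apply standard hypergeometric moment bounds, which yield a leading contribution of order $\lambda(n^2/N)^m$ -- precisely the quantity forced to be $o(1)$ by condition (\ref{cond:2}). In the tail range $K>k_\ast$ I would combine a Stirling-type estimate $\mathbb{P}(K=k)\lesssim (en^2/(kN))^k$ with an inequality of the form $\log(1+\lambda)\leq C\,H_{p_0}(p_1)$ (valid under (\ref{cond:1}), which keeps $p_0$ from being too degenerate), thereby recasting the log-integrand as $\binom{k}{m}H_{p_0}(p_1)-k\log(N/n)+O(k)$; condition (\ref{cond:3}) then makes this expression uniformly negative on $k\in(k_\ast,n]$ up to a strict gap, so the tail is $o(1)$.

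The principal obstacle is the tail step. In general $\log(1+\lambda)$ (the order-$2$ R\'{e}nyi divergence) is only comparable to $H_{p_0}(p_1)$ (the Kullback--Leibler divergence) in suitable regimes, and securing a small enough constant $C$ is where the second half of (\ref{cond:1}), which rules out overly small $n^{m-1}p_0$, enters decisively. A secondary technical issue is aligning the bulk and tail bounds at the same threshold: matching the Stirling estimate of $\binom{n}{k}\binom{N-n}{n-k}/\binom{N}{n}$ against the growth of $(1+\lambda)^{\binom{k}{m}}$ pins down $k_\ast$, and conditions (\ref{cond:2}) and (\ref{cond:3}) emerge as exactly the criteria governing decay of the bulk and of the tail, respectively.
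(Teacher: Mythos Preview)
Your overall framework (Bayesian mixture alternative, second moment, split at a threshold) is the right scaffolding, but there is a genuine gap in the tail step that prevents the untruncated argument from reaching the sharp constant in (\ref{cond:3}).

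The difficulty is exactly the one you flag: the ratio $\Delta/H_{p_0}(p_1)$, where $\Delta=\log(1+\lambda)$, is not bounded by any constant $C\leq 1$ under (\ref{cond:1}). Already in the benign regime $p_1/p_0\to 1$ one has $\Delta\sim\lambda\sim 2H_{p_0}(p_1)$, so the best attainable $C$ is $2$. In the regime $p_1/p_0\to\infty$ with $p_1\to 0$ (perfectly compatible with (\ref{cond:1})--(\ref{cond:3})) the ratio $\Delta/H_{p_0}(p_1)\sim \log(p_1^2/p_0)/(p_1\log(p_1/p_0))$ blows up like $1/p_1$. Consequently your tail bound $\binom{k}{m}\Delta-k\log\frac{kN}{n^2}\leq 0$ only follows from (\ref{cond:3}) with a spurious factor $C$: at $k=n$ you would need $\limsup<1/C$, not $\limsup<1$. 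Concretely, a single term $\mathbb{P}(K=n)(1+\lambda)^{\binom{n}{m}}\asymp\exp\{\binom{n}{m}\Delta-n\log(N/n)\}$ can diverge when the limsup in (\ref{cond:3}) lies in $(1/2,1)$.

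The paper circumvents this by \emph{truncating} the likelihood ratio: it works with $\tilde{L}=\binom{N}{n}^{-1}\sum_{|S|=n}L_S I_{\Gamma_S}$, where $\Gamma_S$ caps $A_T$ for every $T\subset S$ with $|T|>k_{\min}$ at a carefully chosen level $q_{|T|}|T|^{(m)}$. After truncation the exponent for $K>k_{\min}$ becomes $\Delta_K K^{(m)}$ with $\Delta_K=H_{p_0}(q_K)-2H_{p_1}(q_K)$ rather than $\Delta K^{(m)}$; the choice of $q_k$ (Lemma \ref{lem2}) forces $\frac{(k-1)\cdots(k-m+1)}{m!}H_{p_0}(q_k)=\log\frac{N}{k}+2$, and Lemma \ref{lem:3} shows the remaining $H_{p_1}$ part drives the exponent to $-\infty$. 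This truncation device (following \cite{AV14,BI13}) is precisely what recovers the sharp constant; without it the second-moment method proves only a weaker version of (\ref{cond:3}).
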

Proof of Theorem \ref{thm:1} proceeds by showing that 
The lower detection boundary in Theorem \ref{thm:1} is actually sharp, as revealed by the following theorem.
Define the hypergraphic total degree test as
\[
\text{(HTDT):}\,\,\,\, W=\sum_{1\leq i_1<\ldots<i_m\leq N}A_{i_1i_2\ldots i_m},
\]
and the hypergraphic scan test as
\[
\text{(HST):}\,\,\,\, W_n=\max_{|S|=n,S\subset \mathcal{V}}W_S,
\]
where $W_S=\sum_{\substack{1\leq i_1<\ldots<i_m\leq N\\
i_1,\ldots,i_m\in S}}A_{i_1i_2\ldots i_m}$ is the number of edges restricted to $S$.
\begin{Theorem}\label{thm:2}
Let $m\ge2$ and (\ref{cond:1}) hold. The following results hold.
\begin{enumerate}[label={(\Roman*)},itemindent=1em]
\item
Suppose $\lim\limits_{N\to\infty}N^mp_0=\infty$. HTDT is asymptotically powerful if
\begin{equation}\label{cond:4}
\frac{p_1-p_0}{\sqrt{p_0}}\left(\frac{n^2}{N}\right)^{\frac{m}{2}}\to\infty.
\end{equation} \label{thm:2:I}
\item
Suppose $\lim\limits_{n\to\infty}n^mp_1=\infty$. 
HST is asymptotically powerful if
\begin{equation}\label{cond:5}
\limsup_{n,N\to\infty} \frac{(n-1)(n-2)\cdots(n-m+1)H_{p_0}(p_1)}{m!\log\frac{N}{n}}>1.
\end{equation}
\label{thm:2:II}
\end{enumerate}
\end{Theorem}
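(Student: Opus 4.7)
The plan is to establish both parts by comparing a suitably chosen statistic to a threshold, relying on classical tail bounds: Chebyshev's inequality for Part (I) and a Chernoff/Cram\'er large deviation bound combined with a union bound for Part (II).

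For Part (I), I would apply Chebyshev's inequality to $W$. Under $H_0$, $W \sim \mathrm{Binomial}(N^{(m)}, p_0)$, so $\E_0[W]=N^{(m)} p_0$ and $\var_0(W) = N^{(m)} p_0(1-p_0)$. Under $H_1$, $W$ decomposes as the sum of two independent binomials, yielding mean gap $\E_S[W]-\E_0[W] = n^{(m)}(p_1-p_0)$ and variance $\var_S(W) = n^{(m)} p_1(1-p_1) + (N^{(m)}-n^{(m)})p_0(1-p_0) \le \E_S[W]$. I would set the threshold at the midpoint $t = \E_0[W] + \tfrac{1}{2} n^{(m)}(p_1-p_0)$ and reject when $W > t$. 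Chebyshev then bounds the type I error by $\var_0(W)/[t-\E_0(W)]^2 \asymp N^m p_0 / [n^m(p_1-p_0)]^2$, i.e.\ the reciprocal of the squared signal-to-noise ratio $\big[\tfrac{p_1-p_0}{\sqrt{p_0}}(n^2/N)^{m/2}\big]^2$, which tends to zero under (\ref{cond:4}). The type II error admits an analogous Chebyshev bound; the assumption $N^m p_0 \to \infty$ enters by ensuring that the null variance is not negligible and that the alternative-side variance remains of the same order as or smaller than the squared signal.

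For Part (II), I would threshold the scan statistic $W_n$ at $t = n^{(m)} q^*$ for a carefully chosen $q^* \in (p_0, p_1)$. By (\ref{cond:5}) there exist $\delta > 0$ and a subsequence along which $n^{m-1} H_{p_0}(p_1) > (1+\delta)\, m! \log(N/n)$; by continuity of $q \mapsto H_{p_0}(q)$ on $(p_0,1)$ I can pick $q^* \in (p_0, p_1)$ with $q^*$ bounded away from $p_1$ and $n^{m-1} H_{p_0}(q^*) > (1+\delta/2)\, m! \log(N/n)$. A union bound over the $\binom{N}{n} \le \exp(n\log(eN/n))$ vertex subsets of size $n$, combined with the Chernoff upper tail $\mathbb{P}(\mathrm{Bin}(n^{(m)},p_0) \ge n^{(m)} q^*) \le \exp(-n^{(m)} H_{p_0}(q^*))$, yields
\[
\mathbb{P}_0(W_n \ge t) \le \exp\!\big(n\log(eN/n) - n^{(m)} H_{p_0}(q^*)\big) \to 0,
\]
the convergence following from $n^{(m)} \sim n^m/m!$ and the choice of $q^*$. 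The type II error is controlled by the matching Chernoff lower tail $\mathbb{P}_S(W_{S} \le t) \le \mathbb{P}(\mathrm{Bin}(n^{(m)}, p_1) \le n^{(m)} q^*) \le \exp(-n^{(m)} H_{p_1}(q^*))$, which vanishes because $q^*$ is bounded away from $p_1$ and $n^m p_1 \to \infty$ forces $n^{(m)} H_{p_1}(q^*) \to \infty$.

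The main obstacle is the existence-of-$q^*$ step in Part (II): the $\limsup > 1$ condition only gives the strict inequality at $q = p_1$, whereas to bound the alternative tail one must pull $q^*$ strictly away from $p_1$, which only reduces $H_{p_0}(q)$. A continuity argument combined with the explicit monotonicity of $H_{p_0}(q)$ on $[p_0,1]$ allows both constraints to be met simultaneously along the subsequence realising the $\limsup$, which is sufficient to make $\gamma_N(W_n) \to 0$. A secondary subtlety in Part (I) is the alternative-side variance when $p_1-p_0$ is large, but the bound $\var_S(W) \le \E_S(W) \le N^{(m)} p_0 + n^{(m)}(p_1-p_0)$ splits it into a null-like term and a term of order the signal, each of which is absorbed by (\ref{cond:4}).
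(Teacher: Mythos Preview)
Your Part (I) is essentially the paper's argument: both compute the mean gap $n^{(m)}(p_1-p_0)$ and the variances under $H_0$ and $H_1$, then use Chebyshev (the paper packages this as ``$R\to\infty$ implies powerful'' via a lemma from \cite{AV14}). Your observation that $\var_S(W)\le \E_S(W)=N^{(m)}p_0+n^{(m)}(p_1-p_0)$ and that the second piece is $o$ of the squared signal because $n^{(m)}(p_1-p_0)\gg\sqrt{N^m p_0}\to\infty$ is exactly how $N^mp_0\to\infty$ enters; your prose about it ``ensuring the null variance is not negligible'' is slightly off, but the mathematics is right.

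Part (II) has a genuine gap. You assert you can pick $q^*\in(p_0,p_1)$ \emph{bounded away from $p_1$} while keeping $n^{m-1}H_{p_0}(q^*)>(1+\delta/2)\,m!\log(N/n)$. This fails in the regime $p_1/p_0\to 1$. There $H_{p_0}(q)\sim(q-p_0)^2/(2p_0)$, so preserving the inequality forces $q^*-p_0\ge c(p_1-p_0)$ for some fixed $c<1$, hence $p_1-q^*\le(1-c)(p_1-p_0)=o(p_1)$ and $q^*/p_1\to 1$. Your subsequent claim that ``$n^mp_1\to\infty$ forces $n^{(m)}H_{p_1}(q^*)\to\infty$'' then has no support: when $q^*/p_1\to 1$, $H_{p_1}(q^*)\sim(p_1-q^*)^2/(2p_1)$ and you must actually verify $n^{(m)}(p_1-q^*)^2/p_1\to\infty$, which is not implied by $n^mp_1\to\infty$ alone. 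The continuity/monotonicity remark in your final paragraph does not rescue this.

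The paper's proof takes the threshold $a=\eta p_0+(1-\eta)p_1$ with $\eta=o(1)$ chosen \emph{depending on the regime of $p_1/p_0$}: $\eta=(n^{(m)}p_1)^{-1/4}$ when $p_1/p_0\to r\in(1,\infty]$, and $\eta=(\log(N/n))^{-1/4}$ when $p_1/p_0\to 1$. Because $\eta=o(1)$, the type I bound is immediate from $H_{p_0}(a)\sim H_{p_0}(p_1)$. For type II the paper uses Chebyshev rather than Chernoff, reducing to $n^{(m)}(p_1-a)/\sqrt{n^{(m)}p_1}\to\infty$; the case analysis then checks this with the specific $\eta$, and in the delicate $p_1/p_0\to 1$ case it is exactly the KL hypothesis (giving $n^{(m)}(p_1-p_0)^2/p_0\gtrsim n\log(N/n)$) that closes the argument. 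Your Chernoff route can be made to work with the same case split, but not with a single ``bounded away'' choice of $q^*$.
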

The condition $N^mp_0\to\infty$ in Theorem \ref{thm:2} requires that the average degree of the hypergraph tends to infinity, which is useful to ensure detectability. To further propose
consistent detection algorithms, one typically needs stronger condition such as $N^{m-1}p_0\rightarrow\infty$ (see \cite{GD14,GD17,YLFS20}).
The condition $n^mp_1\to\infty$ requires the underlying subhypergraph being suitably dense.
Given $m,n,N$, Theorems \ref{thm:1} and \ref{thm:2} together provide a region $R_{m}$ of $(p_0,p_1)$ in which the dense subhypergraph is undetectable:
\[
R_{m}=\left\{(p_1,p_0): 0<p_0<p_1<1,\ \frac{p_1-p_0}{\sqrt{p_0}}\ll\left(\frac{N}{n^2}\right)^{\frac{m}{2}},\ H_{p_0}(p_1)<\frac{m!}{n^{m-1}}\log\frac{N}{n}\right\},\ \ m\geq 2.
\]
In particular, $R_{2}$ with $m=2$ degenerates to \cite{AV14}. 
It can be checked that when $n\lesssim\sqrt{N}$, $R_m\subsetneq R_2$ if $m>2$; when $n=o(\sqrt{N})$, such inclusion no longer holds if $m>2$. 
See Figures \ref{subHperphase} and \ref{Hperphase}
an illustration for $R_{2}$ and $R_{3}$.
This may indicate that testing subhypergraph is generally easier than testing subgraph when $n\lesssim\sqrt{N}$, while the two testing problems are generally incomparable when $n=o(\sqrt{N})$.

\begin{figure}[htp] 
\vspace{-3mm}
\centering
\includegraphics[scale=0.4]{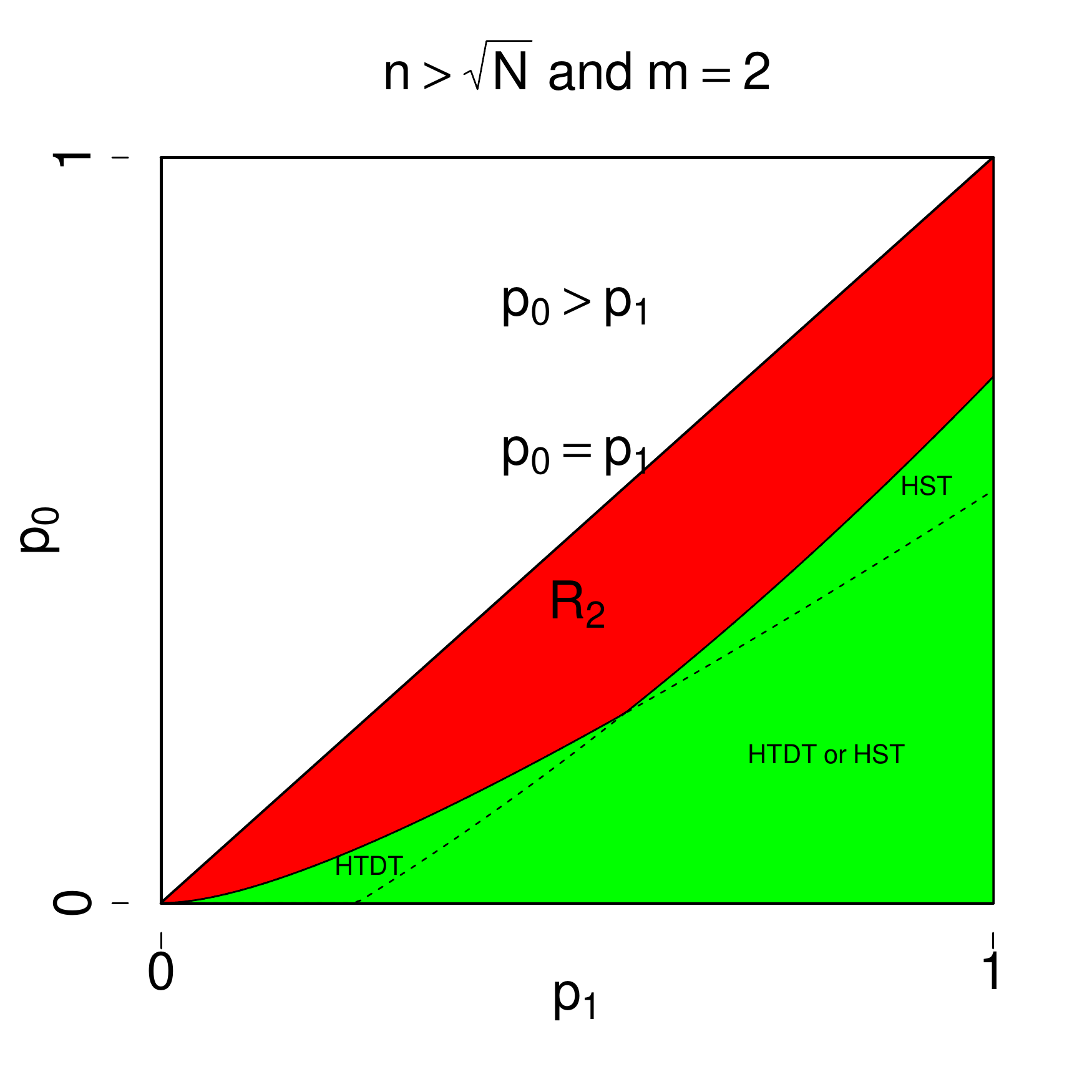}
\hspace{20mm}
\includegraphics[scale=0.4]{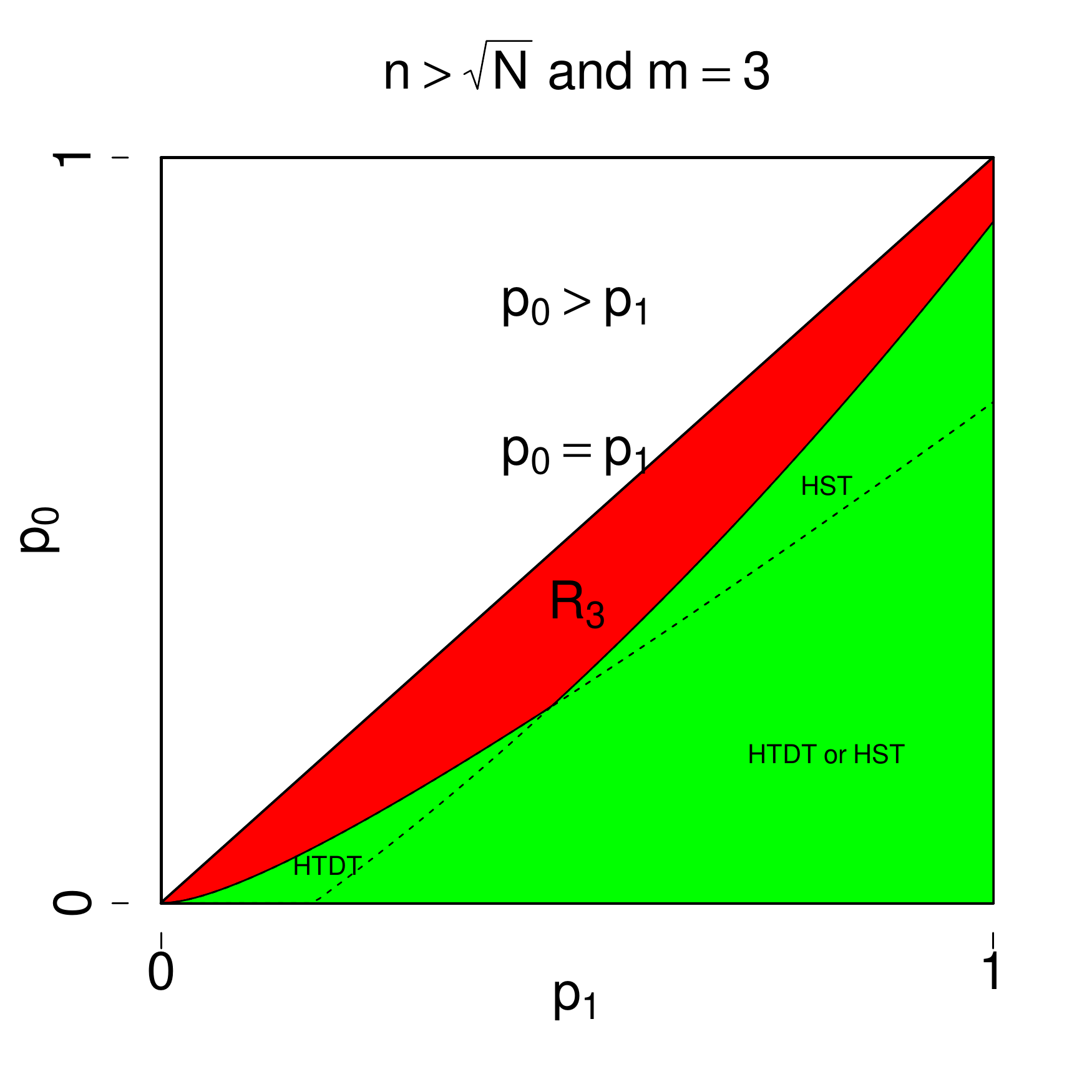}
\vspace{-5mm}
\caption{\it\small Detection boundaries in $(p_1,p_0)$ for testing (\ref{hypothesis}) when $p_0,p_1$ are known and $n\succ\sqrt{N}$. Red: undetectable; green: detectable.}
\label{subHperphase}
\end{figure}

\begin{figure}[htp] 
\vspace{-3mm}
\centering
\includegraphics[scale=0.4]{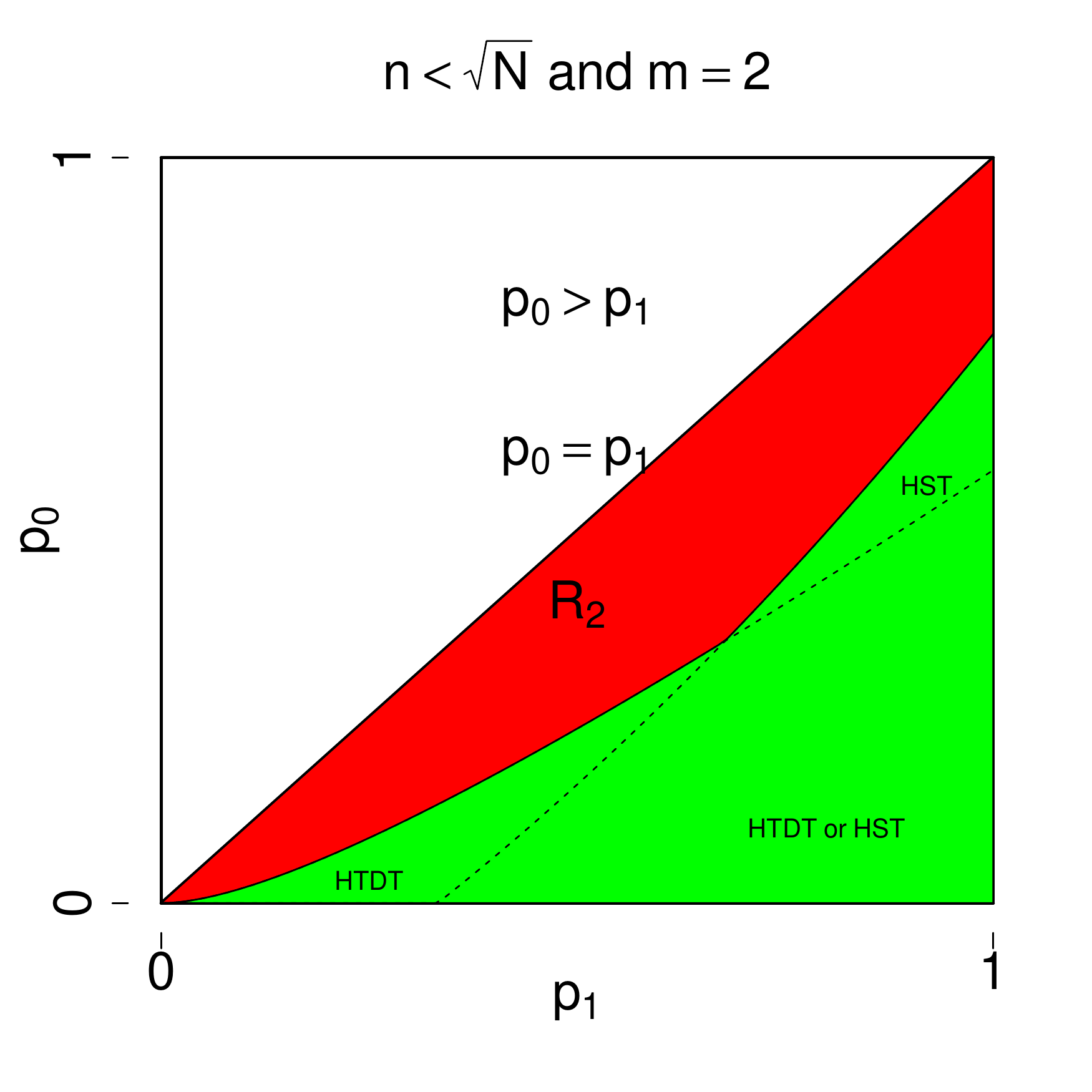}
\hspace{20mm}
\includegraphics[scale=0.4]{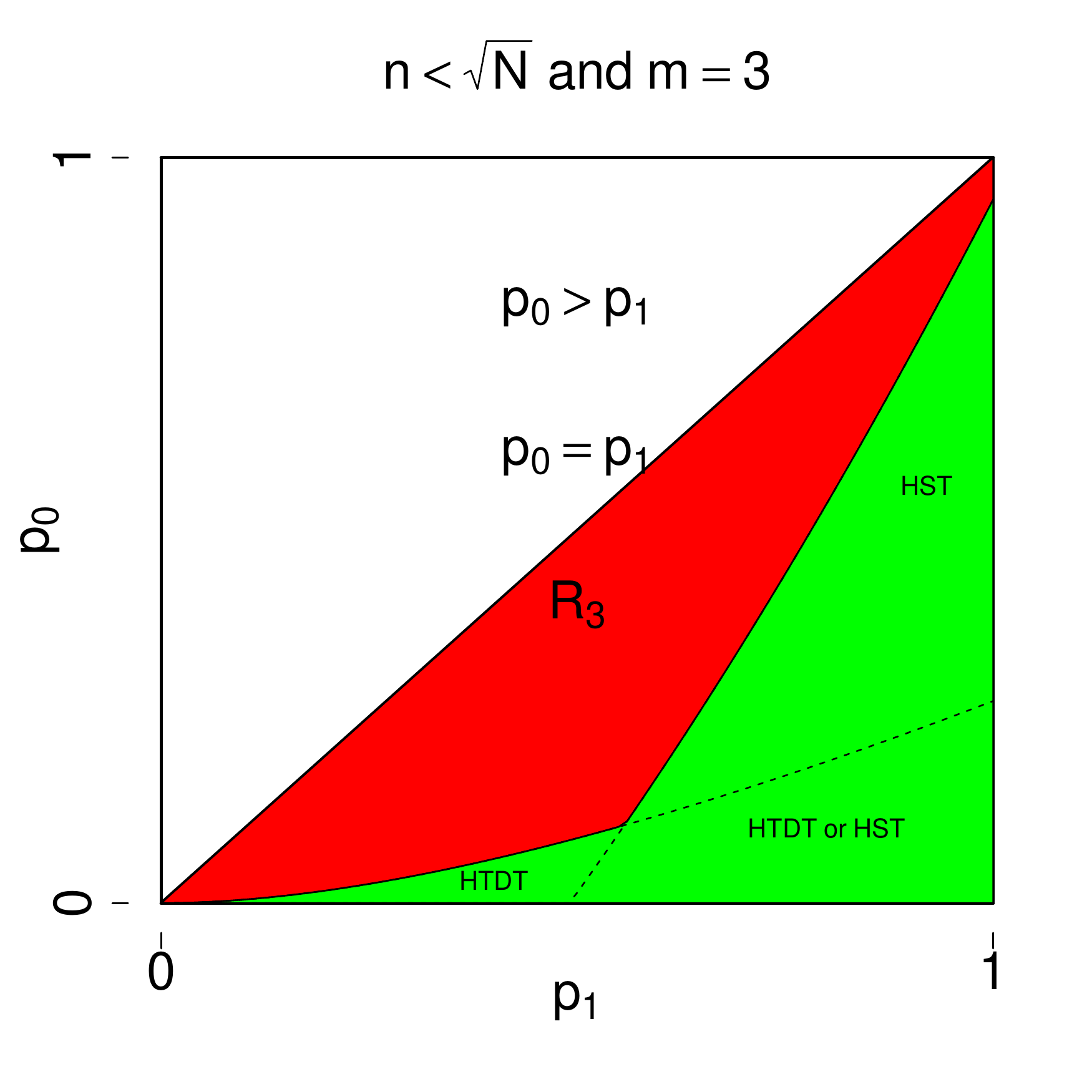}
\vspace{-5mm}
\caption{\it\small Detection boundaries in $(p_1,p_0)$ for testing (\ref{hypothesis}) when $p_0,p_1$ are known and $n=o(\sqrt{N})$. Red: undetectable; green: detectable.}
\label{Hperphase}
\end{figure}

\section{Detection of general dense subhypergraph with unknown $p_0$ and $p_1$}\label{unknown}
Consider the more realistic scenario that both $p_0$ and $p_1$ are unknown
but still satisfy $0<p_0<p_1<1$. 
Now the hypotheses in (\ref{hypothesis}) become composite which poses challenges for directly analyzing $\gamma_N^*(T)$,
defined in (\ref{new:risk}), due to the supremum therein. Inspired by \cite{AV14}, we consider the following auxiliary testing problem:
\begin{equation}\label{unknownp0:hypothesis}
H_0: A\sim \mathcal{H}_m(N,p_0)\,\,\,\,\text{vs.}\,\,\,\, H_1^\prime: A\sim\mathcal{H}_m(N,p_0^{\prime}; n,p_1),
\end{equation}
where $p_0^\prime$ is defined in (\ref{p0:prime}).
It can be checked that the hypergraphs under $H_0$ and $H_1^\prime$ have the same expected total degrees.
We will derive a sharp detection boundary for testing (\ref{hypothesis}) under the following rate conditions:
\begin{equation}\label{ucond:1}
 \frac{\log N}{n^{m-1}}=o(1), \hskip 1cm \log\left(1\vee \frac{1}{n^{m-1}p_0^{\prime}}\right)=o\left(\log\frac{N}{n}\right).
\end{equation}
The following theorems provide a sharp detection boundary characterized 
by $n,N,m,p_0',p_1$. 
\begin{Theorem}\label{thm:4}
Suppose condition (\ref{ucond:1}) holds. All tests are asymptotically powerless if
\begin{equation}\label{cond4:1}
\frac{p_1-p_0^{\prime}}{\sqrt{p_0^{\prime}}}\left(\frac{n^2}{N}\right)^{\frac{m+1}{4}}=o(1),
\end{equation}
\begin{equation}\label{cond4:2}
\limsup\limits_{n,N\to\infty} \frac{(n-1)(n-2)\cdots(n-m+1)H_{p_0^{\prime}}(p_1)}{m!\log\frac{N}{n}}<1.
\end{equation}
\end{Theorem}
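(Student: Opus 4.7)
The plan is to reduce the composite testing problem to the auxiliary simple-vs-simple problem (\ref{unknownp0:hypothesis}) and then show that no test separates the two simple laws via the second-moment method. Since $\mathcal{H}_m(N,p_0)$ with $p_0\in \mathcal{C}_0$ belongs to the composite null and $\mathcal{H}_m(N,p_0^\prime;n,p_1)$ (with $S$ uniform over $n$-subsets) belongs to the composite alternative, it suffices to display two specific laws from these classes between which any test has risk tending to one. The reason for taking the outside-$S$ probability to be $p_0^\prime$ from (\ref{p0:prime}) is that both laws then share the same expected total edge count $N^{(m)}p_0$, which makes total-degree based tests useless and forces lower-bound sensitivity to second-order fluctuations; this underlies the exponent $(m+1)/4$ in (\ref{cond4:1}) being smaller than the $m/2$ of Theorem \ref{thm:1}.

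Under a uniform prior on $S\subset[N]$ with $|S|=n$, let $L$ denote the averaged likelihood ratio with respect to $\mathbb{P}_0=\mathcal{H}_m(N,p_0)$; recall that $\E_0[L^2]\to 1$ gives $\gamma_N^*(T)\to 1$ for every $T$. Because edges are $\mathbb{P}_0$-independent and $L_S$ factorizes over them, a direct computation yields
\begin{equation*}
\E_0[L_S L_{S'}] = (1+A_1)^{k^{(m)}}(1+A_2)^{2(n^{(m)}-k^{(m)})}(1+A_3)^{N^{(m)}-2n^{(m)}+k^{(m)}},
\end{equation*}
where $k=|S\cap S'|$, $A_1=(p_1-p_0)^2/[p_0(1-p_0)]$, $A_2=(p_1-p_0)(p_0^\prime-p_0)/[p_0(1-p_0)]$, and $A_3=(p_0^\prime-p_0)^2/[p_0(1-p_0)]$. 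The algebraic identity $A_1 A_3 = A_2^2$ lets me factor
\begin{equation*}
\E_0[L^2] = C_N \cdot \E_{S,S'}\!\left[\left(1+\frac{B}{(1+A_2)^2}\right)^{k^{(m)}}\right], \qquad C_N=(1+A_2)^{2n^{(m)}}(1+A_3)^{N^{(m)}-2n^{(m)}},
\end{equation*}
with $B=(p_1-p_0^\prime)^2/[p_0(1-p_0)]$, so the effective signal is carried entirely by the gap $\delta=p_1-p_0^\prime$.

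The next step is a careful Taylor expansion, justified by (\ref{ucond:1}): the first-order contribution of $\log C_N$ equals $-\delta^2(n^{(m)})^2/[N^{(m)}p_0(1-p_0)]$ and is cancelled by the first-order contribution of $\log\E\bigl[\exp(k^{(m)}B/(1+A_2)^2)\bigr]$ evaluated at the hypergeometric mean $\E[k^{(m)}]=(n^{(m)})^2/N^{(m)}$. What remains are second-order contributions involving $n^{(m)}A_2^2$ and $\mathrm{Var}(k^{(m)})\cdot B^2$, both of which are $o(1)$ precisely under (\ref{cond4:1}). Since the moment-generating function of $k^{(m)}$ blows up on atypically large overlaps, I would split on $\{k\le k^*\}$ versus $\{k>k^*\}$ for a threshold $k^*\asymp n^2/N$: the typical event is handled by the second-order cancellation just described, and the rare event is controlled by combining the hypergeometric large-deviation bound on $|S\cap S'|$ with the asymptotic identity $\log(1+B/(1+A_2)^2)\sim 2H_{p_0^\prime}(p_1)$, so that (\ref{cond4:2}) makes the tail sum geometrically summable.

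The main obstacle will be making the second-order cancellation on the typical event precise: the plain $\chi^2$ bound only exhibits the first-order cancellation, and the sharp exponent $(m+1)/4$ only emerges after expanding each $\log(1+A_i)$ to second order and using both the mean and the variance of $k^{(m)}$ from the hypergeometric distribution of $|S\cap S'|$. Condition (\ref{ucond:1}) is invoked throughout to validate these expansions and to ensure the approximation $H_{p_0^\prime}(p_1)\approx \delta^2/[2p_0^\prime(1-p_0^\prime)]$ on the parameter range relevant to the typical-overlap regime.
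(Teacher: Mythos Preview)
Your reduction to the auxiliary simple-vs-simple problem and the exact factorization $\E_0[L_SL_{S'}]=C_N\cdot(1+B/(1+A_2)^2)^{k^{(m)}}$ with the first-order cancellation at $\E[k^{(m)}]=(n^{(m)})^2/N^{(m)}$ are correct and match the paper's mechanism for the typical overlap range. The gap is in the tail: the \emph{untruncated} second moment you propose to bound need not be $1+o(1)$. Concretely, even in the regime $p_1/p_0'\to 1$ where your identity $\log(1+B/(1+A_2)^2)\sim 2H_{p_0'}(p_1)$ is valid, the factor of $2$ is fatal: at $K=n$ the contribution is of order
\[
\binom{N}{n}^{-1}\exp\bigl(n^{(m)}\cdot 2H_{p_0'}(p_1)\bigr)\approx \exp\Bigl(n\bigl[(1-2\eta)\log\tfrac{N}{n}+O(1)\bigr]\Bigr),
\]
which diverges whenever $\limsup\frac{n^{m-1}H_{p_0'}(p_1)}{m!\log(N/n)}\in(\tfrac12,1)$. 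Condition (\ref{cond4:2}) alone cannot make this tail geometrically summable. (Outside the $p_1/p_0'\to 1$ regime the asymptotic $\log(1+B)\sim 2H_{p_0'}(p_1)$ itself fails, so the tail argument breaks earlier.)

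The paper circumvents this by working with the \emph{truncated} likelihood ratio $\tilde{L}=\binom{N}{n}^{-1}\sum_{|S|=n}L_S\,I_{\Gamma_S}$, where $\Gamma_S=\bigcap_{T\subset S,\ |T|>k_{\min}}\{A_T\le q_{|T|}|T|^{(m)}\}$ for carefully chosen $q_k\in(p_0,1)$ satisfying $\frac{(k-1)\cdots(k-m+1)}{m!}H_{p_0}(q_k)=\log\frac{N}{k}+2$ (Lemma~\ref{lem2}). On the truncation event the bound on the overlap term improves from $e^{\Delta K^{(m)}}$ to $e^{\Delta_K K^{(m)}}$ with $\Delta_K=H_{p_0}(q_K)-2H_{p_1}(q_K)$, and Lemma~\ref{lem:3} shows this is enough to kill the large-$K$ contribution. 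The analysis then proceeds over four ranges $K\le k_0'$, $k_0'<K\le k_0$, $k_0<K\le k_{\min}$, $k_{\min}<K\le n$, the last two being handled exactly as in the proof of Theorem~\ref{thm:1}. Your single threshold $k^*\asymp n^2/N$ captures only the first split; you are missing the truncation device and the intermediate threshold $k_{\min}$ that make the atypical overlaps summable.
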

The proof of Theorem \ref{thm:4} proceeds as follows.
Under (\ref{cond4:1}) and (\ref{cond4:2}), it holds that for any test $T$,
$\gamma'_N(T)\to1$, where $\gamma_N'(T)=\mathbb{P}_0(T=1)+\max_{|S|=n}\mathbb{P}'_1(T=0)$ is the risk of $T$ for testing (\ref{unknownp0:hypothesis}).
Then $\gamma_N^*(T)\to1$ due to the trivial fact $\gamma_N^*(T)\ge\gamma'_N(T)$.
The result of Theorem \ref{thm:4} is sharp in the sense that there exist asymptotically powerful tests (in the sense of $\gamma_N^*$ risk)
if either (\ref{cond4:1}) or (\ref{cond4:2}) does not hold, as revealed in the following Theorem \ref{thm:6}.
Before stating our results, we define two new tests, namely,
hypergrahic loose 2-path test
(HL2-PT) and hypergraphic tight 2-path test (HT2-PT).
Following \cite{P16}, two hypergraphic edges $e_1=\{i_1, i_2,\ldots, i_m\}$ and $e_2=\{j_1, j_2, \ldots, j_m\}$ form a loose 2-path if $|e_1\cap e_2|=1$;
form a tight 2-path if $|e_1\cap e_2|=m-1$. When $m=2$, a loose 2-path is identical to a tight 2-path. But they are different when $m\ge3$.
Let
\[
V_1=(m-1)!(N-1)^{(m-1)}\frac{N^{(m)}}{N^{(m)}-1}\hat{p}_0(1-\hat{p}_0),\,\,\,
V_2=\frac{1}{N-m!}\sum_{i_1=1}^N\left[W_{i_1*}-(m-1)!(N-1)^{(m-1)}\hat{p}_0\right]^2,
\]
where
\[
\hat{p}_0=\frac{\sum_{1\leq i_1<i_2<\ldots<i_m\leq N}A_{i_1i_2\ldots i_m}}{N^{(m)}} ,\hskip 1cm W_{i_1*}=\sum_{\text{$i_2, i_3,\ldots,i_m$ are pairwise distinct}}A_{i_1i_2\ldots i_m}.
\]
For $m\ge 2$, define HL2-PT as
\[
\text{(HL2-PT):}\,\,\,\,\mathcal{T}_1=\frac{V_2-V_1}{N^{\frac{2m-3}{2}}\hat{p}_0},
\]
and for $m\ge3$, define HT2-PT as
\[ 
\text{(HT2-PT):}\,\,\,\,
\mathcal{T}_2=\frac{\sum_{\text{$i_1,\ldots,i_{m+1}$
are pairwise distinct}}(A_{i_1\ldots i_m}-\hat{p}_0)(A_{i_2\ldots i_{m+1}}-\hat{p}_0)}{\sqrt{(m+1)!N^{(m+1)}\hat{p}_0^2(1-\hat{p}_0)^2}}.
\]
Note that $V_2$ contains a term $W_{i_1*}^2$ which is actually the number of loose 2-path, so $\mathcal{T}_1$ is called as HL2-PT.
The numerator of $\mathcal{T}_2$ contains a sum of $A_{i_1i_2\ldots i_m}A_{i_2\ldots i_m i_{m+1}}$ which is actually the number of tight 2-path, so $\mathcal{T}_2$ is called as HT2-PT.
\begin{Theorem}\label{thm:6}
Let $m\ge2$ and condition (\ref{ucond:1}) hold. The following results hold. 
\begin{enumerate}[label={(\Roman*)},itemindent=1em]
\item Suppose $p_0N^{m}>2n$. HST is asymptotically powerful if 
\begin{equation}\label{cond6:1}
\lim\sup \frac{(n-1)(n-2)\cdots(n-m+1)H_{p_0^{\prime}}(p_1)}{m!\log\frac{N}{n}}>1.
\end{equation}\label{thm:6:I}

\item  Suppose $\lim\inf N^{m-1}p_0>1$.
HL2-PT is asymptotically powerful if 
\begin{equation}\label{cond7:1}
n\succ\sqrt{N}\,\,\,\,\text{and}\,\,\,\,\frac{p_1-p_0^{\prime}}{\sqrt{p_0^{\prime}}}\left(\frac{n^2}{N}\right)^{\frac{m+1}{4}}\rightarrow\infty.
\end{equation}\label{thm:6:II}
\item
HT2-PT is asymptotically powerful if 
\begin{equation}\label{thm8eq}
n=o(\sqrt{N})\,\,\,\,\text{and}\,\,\,\,
\frac{p_1-p_0^{\prime}}{\sqrt{p_0^{\prime}}}\left(\frac{n^2}{N}\right)^{\frac{m+1}{4}}\rightarrow\infty.
\end{equation}\label{thm:6:III} 
\end{enumerate}
\end{Theorem}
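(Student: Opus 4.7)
The three parts call for different techniques: (I) is handled by a data-driven scan-and-threshold argument, while (II) and (III) are handled by the second-moment method applied to the (essentially) degenerate $U$-statistics $\mathcal{T}_1$ and $\mathcal{T}_2$.

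\textbf{Part (I).} Compared with Theorem \ref{thm:2}(II), the new difficulty is that $p_0$ is unknown and so the threshold for the HST must itself depend on the data. The plan is to reject $H_0$ when $W_n \geq n^{(m)} q_n(\hat{p}_0)$ for a threshold function $q_n$ chosen so that $q_n$ falls strictly between $p_0^\prime$ and $p_1$ with high probability under both hypotheses. The key concentration fact is that $\mathbb{E}[\hat p_0]=p_0$ under both $H_0$ and $H_1^\prime$ (by the very definition of $p_0^\prime$), so Bernstein's inequality combined with the assumption $p_0 N^m > 2n$ gives $\hat{p}_0 = p_0(1+o_{\mathbb{P}}(1))$ in both regimes. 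The Type-I error is then controlled by the Chernoff bound $\mathbb{P}_0(W_S\geq n^{(m)}q_n)\leq \exp(-n^{(m)}H_{p_0}(q_n))$ followed by a union bound over the $\binom{N}{n}$ candidate sets $S$; this tends to zero because (\ref{cond6:1}) together with the estimate $p_0-p_0^\prime = O((n/N)^m (p_1-p_0^\prime))$ forces $H_{p_0}(q_n) > (1+\epsilon)\frac{m!}{n^{m-1}}\log\frac{N}{n}$ for an admissible $q_n$ in the gap. The Type-II error is then handled by a one-sided Bernstein bound on $W_{S^\ast}\sim\mathrm{Bin}(n^{(m)},p_1)$ at the planted subset $S^\ast$.

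\textbf{Parts (II) and (III).} In both cases I would compute $\mathbb{E}[\mathcal{T}_j]$ and $\mathrm{Var}(\mathcal{T}_j)$ under each hypothesis and then invoke Chebyshev. For HL2-PT, under $H_0$ the two quantities $V_1$ and $V_2$ are asymptotically equal (the former is the theoretical variance of $W_{i_1\ast}$, the latter its sample analogue), so $\mathbb{E}[V_2-V_1]\approx 0$; under $H_1^\prime$ the vertices in $S$ have degrees inflated by $\sim (m-1)!(n-1)^{(m-1)}(p_1-p_0^\prime)$, and the resulting between-group variance contributes $\mathbb{E}[V_2-V_1]\asymp n^{2m-1}(p_1-p_0^\prime)^2/N$. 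For HT2-PT, the numerator is a centered sum over ordered $(m+1)$-tuples that, under $H_0$, is a degenerate $U$-statistic of mean $0$; under $H_1^\prime$, the $\sim n^{m+1}$ tuples lying entirely in $S$ produce a signal $\asymp n^{m+1}(p_1-p_0^\prime)^2$. The normalizations built into $\mathcal{T}_1$ and $\mathcal{T}_2$ are chosen so that their null variances are $O(1)$, so the signal-to-noise ratio diverges precisely when $(p_1-p_0^\prime)/\sqrt{p_0^\prime}\gg (N/n^2)^{(m+1)/4}$; the dichotomy $n\succ\sqrt{N}$ versus $n=o(\sqrt{N})$ selects which of the two statistics delivers the tight rate in each regime.

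\textbf{Main obstacle.} The technical core is the variance bookkeeping, which is genuinely combinatorial. For HL2-PT, $V_2$ is quadratic in correlated vertex degrees, with data-dependent centering by $\hat{p}_0$; computing $\mathrm{Var}(V_2-V_1)$ requires expanding $(W_{i_1\ast}-\hat\mu_0)^2(W_{j_1\ast}-\hat\mu_0)^2$ into sums over quadruples of edges classified by their intersection pattern (three, two, one, or zero shared vertices), and the hypothesis $\liminf N^{m-1}p_0>1$ is what controls the correlation between $\hat{p}_0$ and each $W_{i_1\ast}$. For HT2-PT, the tight $2$-paths share $m-1$ vertices, so the fourth-moment expansion over pairs of tight $2$-paths leads to several overlap patterns whose individual contributions must be shown to be subdominant; this is where the threshold $n^2\ll N$ emerges, as a larger $n$ would allow one of these combinatorial terms to overwhelm the null variance. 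Finally, all of these moment estimates must be uniform in $(p_0,p_1)$ over the admissible parameter set in order to convert them into statements about the composite risk $\gamma_N^\ast$.
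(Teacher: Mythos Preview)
Your overall architecture matches the paper: Part (I) via a data-driven threshold, concentration of $\hat p_0$, Chernoff plus union bound for Type I, and a lower-tail bound at the planted $S^\ast$ for Type II; Part (II) by computing the first two moments of $V=V_2-V_1$ under each hypothesis and applying Chebyshev; Part (III) by the analogous moment method for the tight-2-path statistic. The specific signal sizes you identify, $\mathbb{E}_1[V_2-V_1]\asymp n^{2m-1}(p_1-p_0')^2/N$ and $\mathbb{E}_1[\mathcal{T}_2]\asymp (p_1-p_0')^2 p_0^{-1}(n^2/N)^{(m+1)/2}$, are exactly what the paper obtains, and your explanation for the $n\succ\sqrt{N}$ versus $n=o(\sqrt{N})$ split is correct: for HL2-PT the signal-to-null-noise ratio scales as $(n^2/N)^{(2m-1)/2}$ rather than $(n^2/N)^{(m+1)/2}$, and these coincide only at $m=2$.

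One substantive difference in Part (III): you plan to finish with Chebyshev alone, but the paper does not. Under $H_1'$ the variance of $T_{1n}$ may itself diverge (driven by the in-$S$ pieces $T_{a1n},T_{a2n}$ or the cross piece $T_{b2n}$), so the paper splits into cases. When $V_1:=\mathrm{Var}_1(T_{1n})=O(1)$ Chebyshev suffices, but when $V_1\to\infty$ the paper instead shows that the dominant centered summand, after normalization by $\sqrt{V_1}$, satisfies a (martingale) CLT; non-degeneracy of the limit then gives $|T_{1n}|\to\infty$ in probability even without verifying $\mathbb{E}_1 T_{1n}\gg\sqrt{V_1}$. Your Chebyshev route would instead require proving $\mathbb{E}_1 T_{1n}\gg\sqrt{V_1}$ case by case; this appears to hold (e.g.\ the ratios reduce to quantities like $n^{m}p_1$, $n^{m+1}p_1^2$, $N^{(m-1)/2}$ that diverge in the relevant sub-cases), but establishing it is essentially the same combinatorial work and is not lighter than the paper's CLT argument. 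Similarly, in Part (I) the paper's threshold is the explicit $\hat a_q=H_{\hat p_0}^{-1}\!\bigl(n(\log(N/n)+2)/n^{(m)}\bigr)$, and the Type-I control hinges on bounding $H_q(a_q)-H_{p_0}(a_q)$ via a two-case analysis ($n^{m-1}p_0\gg\log(N/n)$ versus $n^{m-1}p_0=O(\log(N/n))$); your sketch gestures at the right inequality but does not name this step, which is where most of the work in Part (I) actually lies.
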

When $m=2$, HL2-PT degenerates to the degree variance test considered in \cite{AV14} which is proven asymptotically powerful if $\frac{p_1-p_0^{\prime}}{\sqrt{p_0^{\prime}}}\left(\frac{n^2}{N}\right)^{\frac{3}{4}}\rightarrow\infty$; see Proposition 5 therein. 
When $m\ge3$, we are not able to show the asymptotic powerfulness (in the sense of $\gamma_N^*$ risk) of HL2-PT 
merely under the condition $\frac{p_1-p_0^{\prime}}{\sqrt{p_0^{\prime}}}\left(\frac{n^2}{N}\right)^{\frac{m+1}{4}}\rightarrow\infty$. We need an additional condition $n\succ\sqrt{N}$ to achieve such goal as in (\ref{cond7:1}).
When $n=o(\sqrt{N})$, we propose a new test HT2-PT which is proven asymptotically powerful as in (\ref{thm8eq}). 
This reveals a substantial difference in testing the hypergraphic hypothesis (\ref{unknownp0:hypothesis}) versus testing the graphic counterpart \cite{AV14}.
It is worth mentioning that the result of in Theorem \ref{thm:6} \ref{thm:6:II} can be further extended, i.e., HL2-PT is asymptotically powerful if
\begin{equation}\label{cond7:1*}
\frac{p_1-p_0^{\prime}}{\sqrt{p_0^{\prime}}}\left(\frac{n^2}{N}\right)^{\frac{m+1}{4}}\to\infty\,\,\,\,\text{and}\,\,\,\, \frac{p_1-p_0^{\prime}}{\sqrt{p_0^{\prime}}}\left(\frac{n^2}{N}\right)^{\frac{2m-1}{4}}\rightarrow\infty.
\end{equation}
The sharp detection boundaries stated in Theorem \ref{thm:6}
are displayed in Figure \ref{unknownsubHperphase} and Figure \ref{unknownHperphase},
in which the undetectable region of $(p_1,p_0')$ is defined by
\[
R_{m}=\left\{(p_1,p_0^{\prime}): 0<p_0^{\prime}<p_1<1,\ \frac{p_1-p_0^{\prime}}{\sqrt{p_0^{\prime}}}\ll\left(\frac{N}{n^2}\right)^{\frac{m+1}{4}},\ H_{p_0^{\prime}}(p_1)<\frac{m!}{n^{m-1}}\log\frac{N}{n}\right\},\ \ m\geq 2.
\]
It can be seen that, when $n\succ\sqrt{N}$, $R_m\subsetneq R_2$ if $m>2$;
when $n=o(\sqrt{N})$, such inclusion no longer holds if $m>2$.
This phenomenon is similar to Figures \ref{subHperphase} and \ref{Hperphase}. 
The difference is that we need HST or HL2-PT in the former
while need HST or HT2-PT in the latter.

\begin{figure}[htp] 
\vspace{-3mm}
\centering
\includegraphics[scale=0.4]{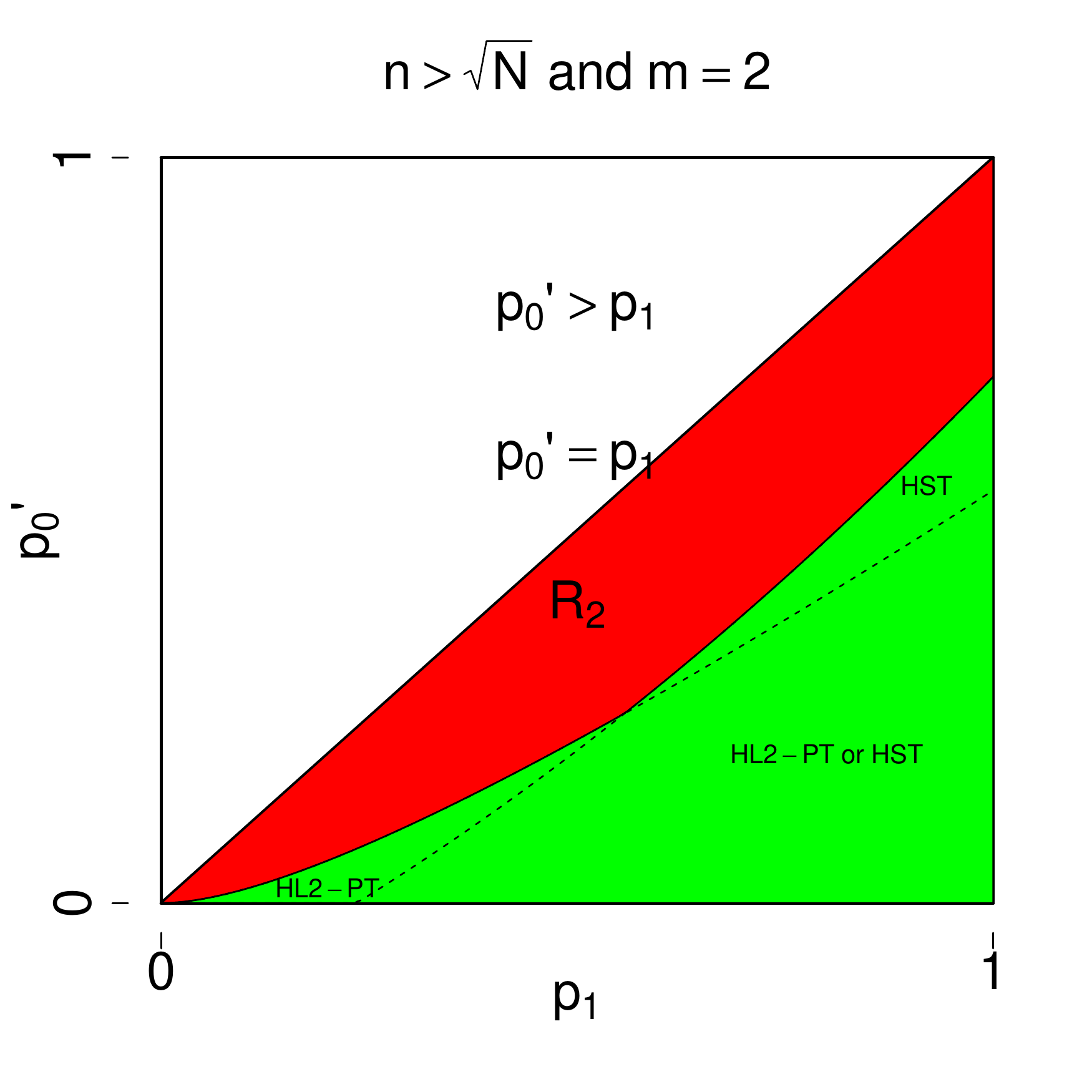}
\hspace{20mm}
\includegraphics[scale=0.4]{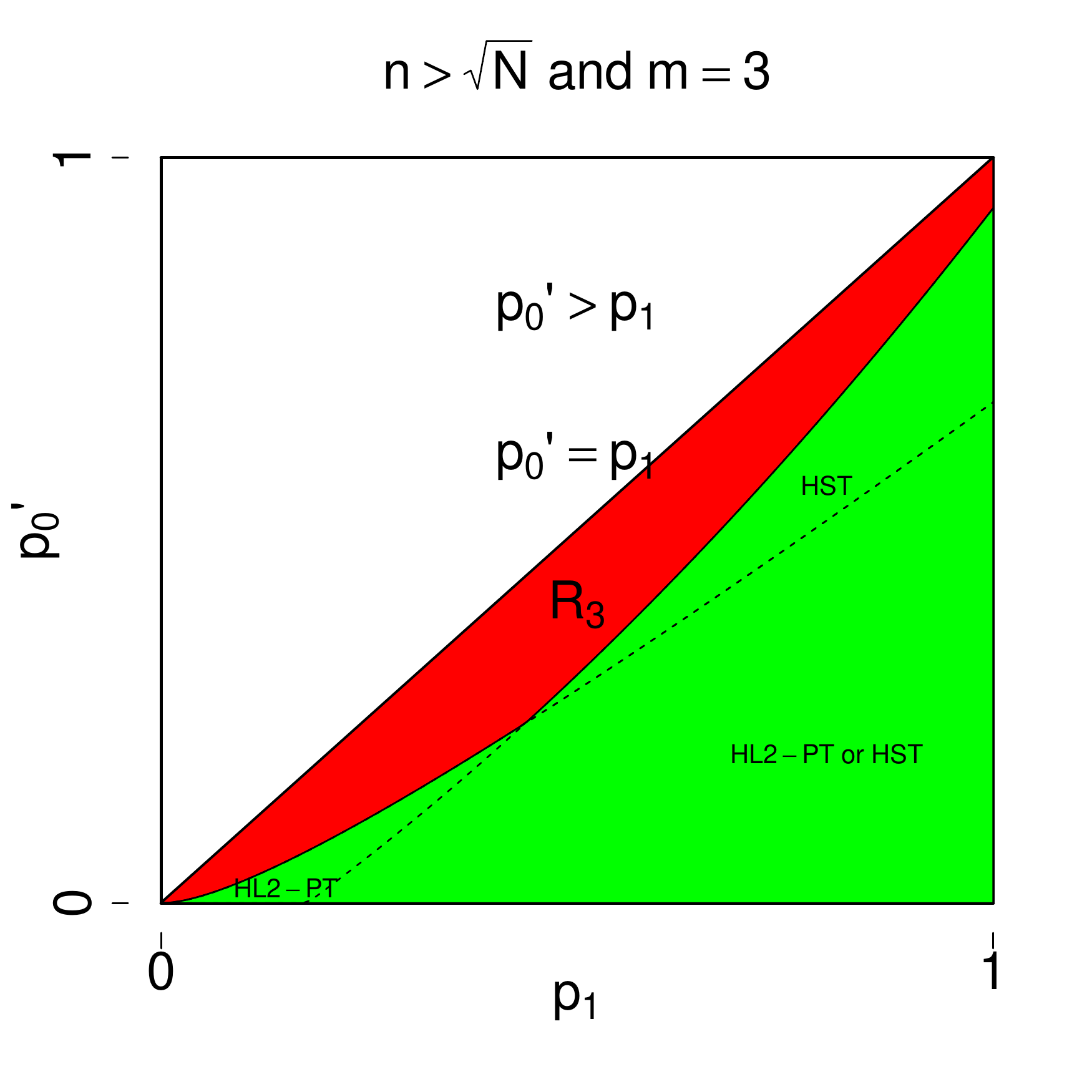}
\vspace{-5mm}
\caption{\it\small Detection boundaries in $(p_1,p_0^{\prime})$ for testing (\ref{hypothesis}) when $p_0,p_1$ are unknown and $n\succ\sqrt{N}$. Red: undetectable; green: detectable.}
\label{unknownsubHperphase}
\end{figure}
\begin{figure}[htp] 
\vspace{-3mm}
\centering
\includegraphics[scale=0.4]{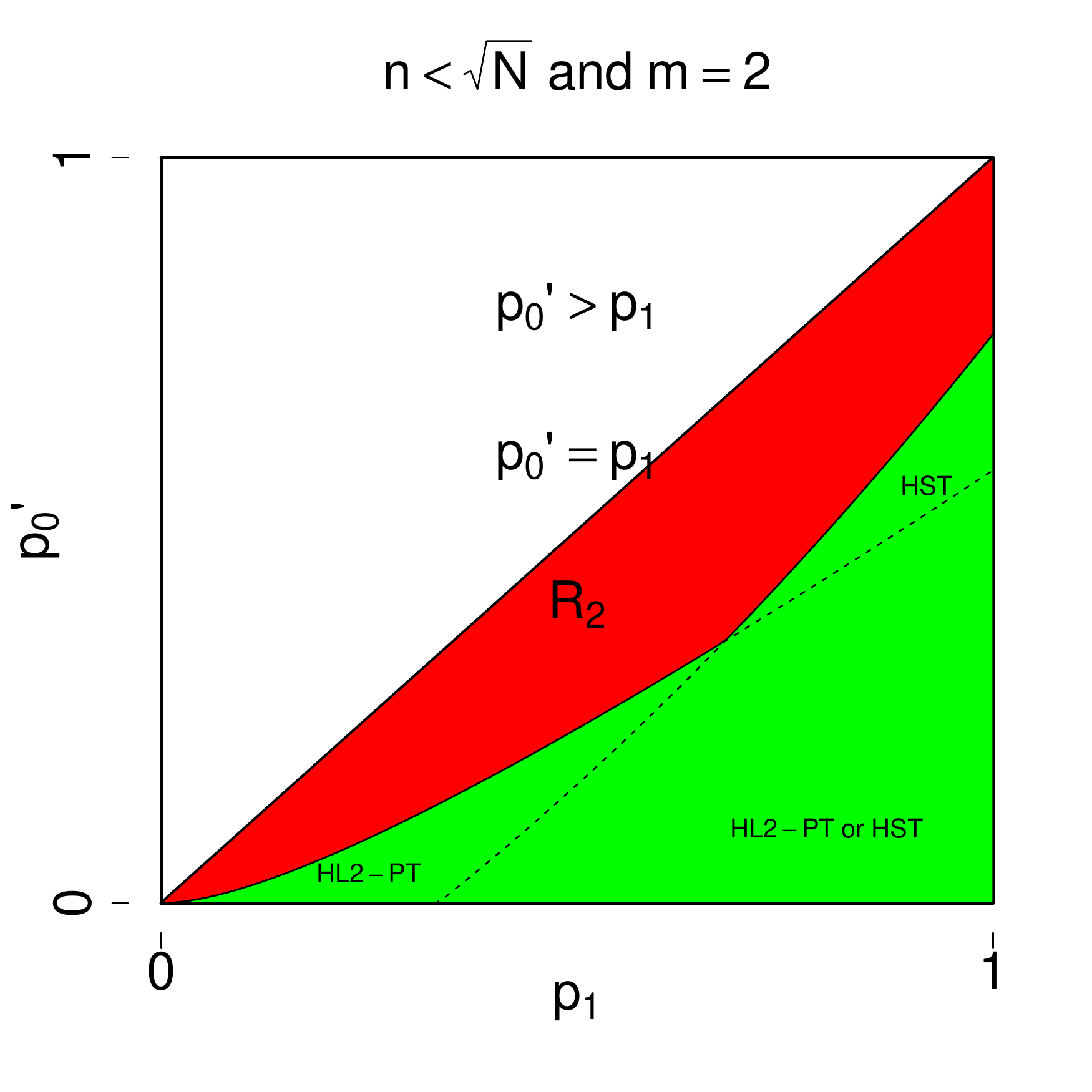}
\hspace{20mm}
\includegraphics[scale=0.4]{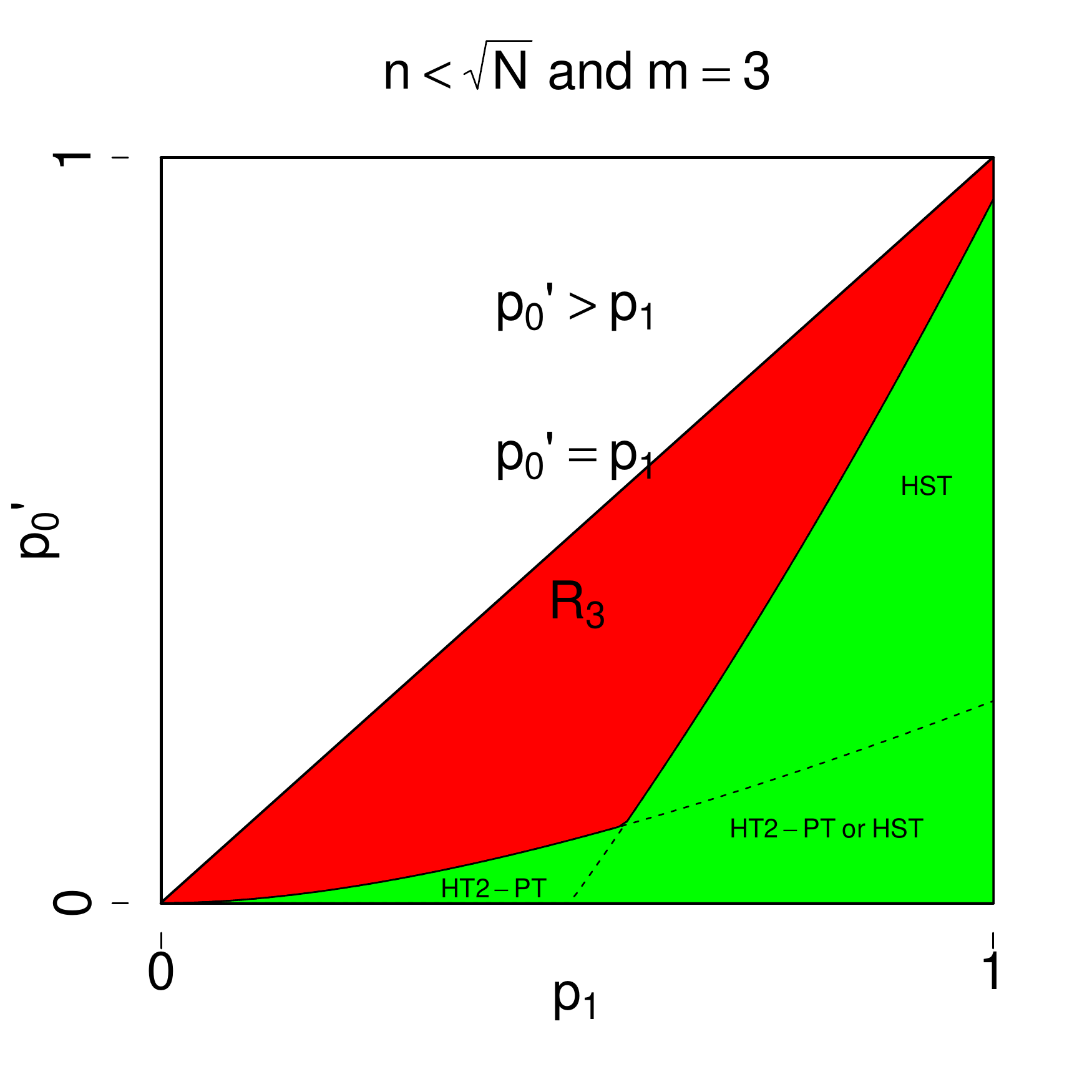}
\vspace{-5mm}
\caption{\it\small Detection boundaries in $(p_1,p_0^{\prime})$ for testing (\ref{hypothesis}) when $p_0,p_1$ are unknown and $n=o(\sqrt{N})$. Red: undetectable; green: detectable.}
\label{unknownHperphase}
\end{figure}




\section{Proofs.}\label{proof}
In this section, we prove the main results of this paper.
\begin{proof}[Proof of Theorem \ref{clique0} \ref{thm:1:I}] 
By direct examinations,
we can show that the likelihood ratio statistic for testing (\ref{hypothesis}) is equal to
\[
L=\frac{|\{S\subset [N]: |S|=n, W_S=n^{(m)}\}|}{N^{(n)}p_0^{n^{(m)}}}.
\]
Let $S_1,S_2$ be two independent and uniformly selected subsets of $[N]$ with cardinality $n$, and let $K=|S_1\cap S_2|$. 
By (37) in \cite{AV14}, it follows that
\[\mathbb{P}(K\geq k)\leq \exp\left[-nH_{\rho}(\frac{k}{n})\right]=\exp\left[-k\left(\log\frac{kN}{n^2}+O(1)\right)\right],
\]
where $\rho=\frac{n}{N-n}$.
Then we have
\begin{eqnarray}\nonumber
\mathbb{E}_0L^2&=&\mathbb{E}_0p_0^{-K^{(m)}}\leq \mathbb{P}_0(K\leq m-1)+\sum_{k=m}^np_0^{-k^{(m)}}\exp\left(-nH_{\rho}(\frac{k}{n})\right)\\  \label{cliqeq1}
&\leq&1+\sum_{k=m}^n\exp\left(-k\left[\log\frac{kN}{n^2}-\frac{k^{(m)}}{k}\log\frac{1}{p_0}+O(1)\right]\right).
\end{eqnarray}

Let $f(k)=\log\frac{kN}{n^2}-\frac{k^{(m)}}{k}\log\frac{1}{p_0}$, for $k\ge1$.
By simple calculations, it can be shown that 
$f(k)\geq c_n:=\min\{f(m),f(n)\}$. Note that the condition in part (\ref{thm:1:I}) implies $n=o(\log N)$, we get that
\begin{eqnarray*}
f(n)&=&\log\frac{N}{n}-\frac{n^{(m)}}{n}\log\frac{1}{p_0}
\geq\log N-\frac{n^{m-1}}{m!}\log\frac{1}{p_0}-\log n\\
&\geq& \log N-(1-\epsilon)\log N-\log n
\rightarrow\infty.
\end{eqnarray*}
Besides,
\[f(m)=\log\frac{mN}{n^2}-\frac{1}{m}\log\frac{1}{p_0}=\log m+f(n)+\left(\frac{n^{(m)}}{n}-\frac{1}{m}\right)\log\frac{1}{p_0}-\log n\rightarrow\infty.\]
Hence $c_n\rightarrow\infty$.
Then by (\ref{cliqeq1}), we conclude that
\[\mathbb{E}_0L^2\leq 1+\sum_{k=m}^n\exp(-kc_n)=1+o(1).\]
As a result, $L$ converges to 1 in distribution under $H_0$.
\end{proof}

\begin{proof}[Proof of Theorem \ref{clique0} \ref{thm:1:II}]
Note that if $\omega_N\geq n$, then there exists at least one clique
of cardinality $n$ in the hypergraph by the definition of clique number \cite{BP09}.
Let $X_n$ denote the number of such cliques in an $m$-uniform hypergraph. Under $H_0$, we have
\begin{eqnarray*}
\mathbb{P}(\omega_N\geq n)\leq\mathbb{P}(X_n\geq 1)\leq \mathbb{E}X_n=\binom{N}{n}p_0^{\binom{n}{m}}.
\end{eqnarray*}
Under the condition of part \ref{thm:1:II}, 
by Stirling's approximation, one has
\begin{eqnarray*}
\log \binom{N}{n}+\binom{n}{m}\log p_0&\leq& n\log N-n\log n+n+O(\log n)-\frac{n^m}{m!(1+\frac{\epsilon}{2})}\log \frac{1}{p_0}\\ 
&\leq& n\left[\log N-\frac{1+\epsilon}{1+\frac{\epsilon}{2}}\log N-\log n+1+o(1)\right]\rightarrow-\infty.
\end{eqnarray*}
Then we have $\mathbb{P}(\omega_N\leq n-1)=1-o(1)$ under $H_0$. 
Under $H_1$, there exists at least one clique of cardinality $n$ and hence $\omega_N\geq n$. Consequently, the clique number test is asymptotically powerful.
\end{proof}

Let's provide several lemmas before proving Theorem \ref{thm:1}. 
\begin{Lemma}[Lemma 3 of \cite{AV14}]\label{lem:0}
For $p\in (0,1)$, $H_p(q)$ is convex and increasing in $q\in [0,1]$. Moreover,
  \[
    H_p(q)=\left\{
                \begin{array}{ll}
                  \frac{(q-p)^2}{2p(1-p)}+O\left(\frac{(q-p)^3}{p^2}\right), \hskip 1cm \frac{q}{p}\rightarrow1;\\
                  p(r\log r-r+1), \hskip 1cm \frac{q}{p}\rightarrow r\in(1,\infty),p\rightarrow0;\\
                  q\log\frac{q}{p}+O(q), \hskip 1cm \frac{q}{p}\rightarrow \infty.
                \end{array}
              \right.
  \]
\end{Lemma}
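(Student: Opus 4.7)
\textbf{Proof proposal for Lemma \ref{lem:0}.}
The plan is to work directly from the definition $H_p(q)=q\log(q/p)+(1-q)\log((1-q)/(1-p))$ via elementary calculus, with the three asymptotic expansions obtained by Taylor expanding in the regime-appropriate small parameter.

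First I would establish the qualitative properties by differentiating. The first derivative is $H_p'(q)=\log\frac{q(1-p)}{p(1-q)}$, which vanishes at $q=p$ and is strictly positive for $q>p$, yielding monotonicity on $[p,1]$ (the relevant regime in this paper, where $q=p_1>p_0=p$). The second derivative $H_p''(q)=1/(q(1-q))>0$ on $(0,1)$ gives strict convexity. Strictly speaking $H_p$ is decreasing on $[0,p]$ and increasing on $[p,1]$, so the stated ``increasing in $q\in[0,1]$'' should be read as increasing in $q\in[p,1]$, which suffices for every subsequent application.

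Next, I would derive each asymptotic. In Regime~1 ($q/p\to1$), Taylor expansion around $q=p$ gives $H_p(q)=\frac{1}{2}H_p''(p)(q-p)^2+\frac{1}{6}H_p'''(\xi)(q-p)^3$ for some $\xi$ between $p$ and $q$; plugging in $H_p''(p)=1/(p(1-p))$ and $H_p'''(q)=-1/q^2+1/(1-q)^2=O(1/p^2)$ uniformly on a small neighborhood of $p$ produces the stated expansion. In Regime~2 ($q=rp$, $p\to0$), I substitute $q=rp$ directly and expand $\log(1-rp)$ and $\log(1-p)$ via $\log(1-x)=-x+O(x^2)$, obtaining $\log\frac{1-rp}{1-p}=-(r-1)p+O(p^2)$; multiplying by $1-rp=1+O(p)$ and adding the explicit term $rp\log r$ recovers $p(r\log r-r+1)+O(p^2)$. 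In Regime~3 ($q/p\to\infty$), I split $H_p(q)=q\log(q/p)+R(p,q)$ with $R(p,q)=(1-q)\log\frac{1-q}{1-p}$ and show $R(p,q)=O(q)$ in two sub-cases: if $q$ stays bounded away from $0$, $R(p,q)$ is bounded (for $p$ bounded away from $1$) and hence $O(q)$ trivially; if $q\to 0$, then $p=o(q)$ and $R(p,q)=(1-q)[-(q-p)]+O(q^2)=-q+O(q^2+p)=O(q)$.

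The main obstacle I anticipate is managing the \emph{uniformity} of the remainder terms across the joint regimes of $p$ and $q$ rather than any single step being hard. In Regime~1 the cubic remainder involves $H_p'''(\xi)$ at an intermediate point $\xi$, and one must verify $|H_p'''(\xi)|\lesssim 1/p^2$ uniformly in $\xi$ between $p$ and $q$, which forces $q$ to stay sufficiently close to $p$ (justifying the hypothesis $q/p\to1$). In Regime~3 the case split between $q\to0$ and $q$ bounded away from $0$ is unavoidable because the $O(q)$ conclusion has different origins in the two sub-cases, and both require $p$ bounded away from $1$ so that $\log(1-p)$ stays bounded. Once these uniformity checks are in place, the entire lemma follows from standard one-variable calculus, consistent with its citation as Lemma~3 of \cite{AV14}.
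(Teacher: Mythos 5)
The paper does not prove this lemma at all; it is imported verbatim as Lemma~3 of \cite{AV14}, so there is no in-paper argument to compare against. Your direct calculus proof is correct and supplies exactly the standard argument behind that citation: $H_p'(q)=\log\frac{q(1-p)}{p(1-q)}$ and $H_p''(q)=\frac{1}{q(1-q)}$ give convexity and monotonicity on $[p,1]$ (and your remark that ``increasing on $[0,1]$'' must be read as ``increasing on $[p,1]$'' is right and harmless, since every application in the paper has $q\ge p$), while the three Taylor expansions and the case split in Regime~3 are all handled correctly. The only caveat, which you essentially flag, is that the remainder bounds in Regimes~1 and~3 additionally require $p$ bounded away from $1$ (otherwise $H_p'''(\xi)$ contains an uncontrolled $(1-\xi)^{-2}$ term); this restriction is implicit in the statement as inherited from \cite{AV14} and holds in all uses here.
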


Additional lemmas are presented below. Define
\[\Delta=\log\left(1+\frac{(p_1-p_0)^2}{p_0(1-p_0)}\right),\hskip 1cm \ k_*=\left(1+\frac{m!\log\frac{N}{n}}{\Delta}\right)^{\frac{1}{m-1}}\wedge n,\]
\[k_{min}=\left(1+\frac{m!\log\frac{Nk_*}{n^2}-\log\left(\log\frac{n^{m-1}}{\log\frac{N}{n}}\wedge\log\frac{N}{n}\right)}{\Delta}\right)^{\frac{1}{m-1}}\wedge n.\]

\begin{Lemma}\label{lem:1}
Under conditions (\ref{cond:1}), (\ref{cond:2}),  (\ref{cond:3}), we have $k^*\sim k_{min}\rightarrow\infty$ and $\log\frac{n}{k_{min}}=o\left(\log \frac{N}{n}\right)$.
\end{Lemma}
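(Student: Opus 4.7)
The plan is to show in sequence that (a) $\Delta$ is uniformly controlled in terms of the problem parameters, (b) $k_*$ is of order $n$ and therefore tends to infinity, and (c) the additional logarithmic correction in $k_{min}$ is asymptotically negligible relative to $k_*^{m-1}$. Set $L:=\log(N/n)$ throughout.

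For step (a), conditions (\ref{cond:1}) and (\ref{cond:3}) together force $H_{p_0}(p_1)=o(1)$, so Lemma \ref{lem:0} applies. In the regimes $p_1/p_0\to 1$ and $p_1/p_0\to r\in(1,\infty)$ with $p_0\to 0$, an explicit Taylor/limit computation gives $\Delta=O(H_{p_0}(p_1))$, and (\ref{cond:3}) then yields $n^{m-1}\Delta=O(m!L)$. The heavy regime $p_1/p_0\to\infty$, where $\Delta$ may dominate $H_{p_0}(p_1)$, is where (\ref{cond:2}) becomes essential: combining $\Delta\le (p_1-p_0)^2/(p_0(1-p_0))$ with (\ref{cond:2}) gives $\Delta\ll (N/n^2)^m$, and in the parameter range where this regime is relevant one still recovers $n^{m-1}\Delta=O(m!L)$ by direct comparison.

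For step (b), this uniform bound gives
\[
k_*^{m-1}=\min\Bigl\{1+\frac{m!L}{\Delta},\,n^{m-1}\Bigr\}\ge c\,n^{m-1}
\]
for some $c>0$. Since $n\to\infty$ by (\ref{cond:1}), this yields $k_*\to\infty$ and $\log(n/k_*)=O(1)$. For step (c), using $\log(Nk_*/n^2)=L-\log(n/k_*)$ in the definition of $k_{min}$, the uncapped value is
\[
k_{min}^{m-1}=k_*^{m-1}-\frac{m!\log(n/k_*)+\log\bigl(\log(n^{m-1}/L)\wedge L\bigr)}{\Delta}.
\]
The inequality $\Delta\cdot k_*^{m-1}\ge m!L$, immediate from the definition of $k_*$ (in both the capped and uncapped case), bounds the relative correction by $\log(n/k_*)/L+\log L/(m!L)=O\bigl((1+\log L)/L\bigr)$, which tends to $0$ under the asymptotic regime $L\to\infty$. (In the bounded-$L$ regime, condition (\ref{cond:2}) forces $\Delta$ to be so much smaller than $m!L/n^{m-1}$ that the cap at $n$ is active for both $k_*$ and $k_{min}$, and all conclusions are trivial.) Hence $k_{min}\sim k_*$, so $k_{min}\to\infty$ and
\[
\log(n/k_{min})=\log(n/k_*)+\log(k_*/k_{min})=O(1)+o(1)=o(L).
\]

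The main obstacle is step (a) in the heavy regime $p_1/p_0\to\infty$ of Lemma \ref{lem:0}, where $\Delta/H_{p_0}(p_1)$ can diverge so that (\ref{cond:3}) alone is insufficient to bound $\Delta$; securing $n^{m-1}\Delta=O(m!L)$ uniformly requires a careful combined use of (\ref{cond:2}) and (\ref{cond:3}).
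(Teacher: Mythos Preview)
Your three-step plan is the right architecture, and Steps (a)--(b) are fine in the regimes $p_1/p_0\to 1$ and $p_1/p_0\to r\in(1,\infty)$, where indeed $\Delta\asymp H_{p_0}(p_1)$ so that (\ref{cond:3}) alone gives $n^{m-1}\Delta=O(m!L)$. The gap is in the heavy regime $p_1/p_0\to\infty$: your central claim $n^{m-1}\Delta=O(m!L)$ is \emph{false} there, so $k_*\asymp n$ and $\log(n/k_*)=O(1)$ need not hold. A concrete counterexample (with $m=2$) is $p_0=1/(n\log n)$, $p_1=\sqrt{p_0}$, $N=n\exp(\sqrt{n\log n})$. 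One checks (\ref{cond:1})--(\ref{cond:3}) directly; here $(p_1-p_0)^2/(p_0(1-p_0))\to 1$, so $\Delta\to\log 2$ and $n^{m-1}\Delta\sim n\log 2$, while $m!L=2\sqrt{n\log n}=o(n)$. Thus $k_*\sim (2/\log 2)\sqrt{n\log n}=o(n)$ and $\log(n/k_*)\sim\tfrac12\log n$, which is $o(L)$ but certainly not $O(1)$. Your route through (\ref{cond:2}) only yields $n^{m-1}\Delta\ll N^m/n^{m+1}$, and the ``direct comparison'' to $L$ you invoke does not exist: $N^m/n^{m+1}$ can be exponentially large in $L$.

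The paper's proof does not use (\ref{cond:2}) here at all. The missing idea is the algebraic inequality
\[
1+\frac{(p_1-p_0)^2}{p_0(1-p_0)}\;\le\;\frac{p_1}{p_0},
\]
which gives $\Delta\le\log(p_1/p_0)$. One then uses the heavy-regime expansion $H_{p_0}(p_1)\sim p_1\log(p_1/p_0)$ from Lemma~\ref{lem:0} together with (\ref{cond:3}) to get $\tfrac{p_1}{p_0}\log\tfrac{p_1}{p_0}\prec L/(n^{m-1}p_0)$, and then (\ref{cond:1}) to deduce $\log(p_1/p_0)=o(L)$. This yields only $k_*\to\infty$ and $(n/k_*)^{m-1}\prec L/(n^{m-1}p_0)$, hence $\log(n/k_*)=o(L)$---weaker than your $O(1)$, but sufficient. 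Your Step~(c) actually survives with this weaker input: the relative correction you wrote is bounded by $\log(n/k_*)/L+\log L/(m!L)$, and both terms are $o(1)$ once you have $\log(n/k_*)=o(L)$ rather than $O(1)$. So the fix is localized: replace your (\ref{cond:2})-based bound in the heavy regime by the inequality $\Delta\le\log(p_1/p_0)$ and the argument above.
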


\begin{proof}[Proof of Lemma \ref{lem:1}] 
It suffices to prove $k_*\rightarrow\infty$ and $\log\frac{n}{k_{*}}=o\left(\log \frac{N}{n}\right)$.
Suppose $\frac{p_1}{p_0}\rightarrow1$, then $\Delta\sim 2H_{p_0}(p_1)$ by Lemma \ref{lem:0}. Hence, we have
\[k_*\succ\left(\frac{\log\frac{N}{n}}{H_{p_0}(p_1)}\right)^{\frac{1}{m-1}}\wedge n\succ\left(\frac{n^{m-1}\log\frac{N}{n}}{m!\log\frac{N}{n}}\right)^{\frac{1}{m-1}}\wedge n\succ n\rightarrow\infty,\]
from which it follows that $\log\frac{n}{k_*}=O(1)$.

If $\frac{p_1}{p_0}\rightarrow r\in(1,\infty)$, then $H_{p_0}(p_1)=o(1)$ by condition (\ref{cond:1}) and $p_0=o(1)$. By Lemma \ref{lem:0} and condition (\ref{cond:3}), we get that
$H_{p_0}(p_1)\sim p_0(r\log r-r+1)\prec \frac{\log\frac{N}{n}}{n^{m-1}}$. We therefore conclude that $n^{m-1}\prec \frac{\log\frac{N}{n}}{p_0}$ and
$k_*\succ\left(\frac{\log\frac{N}{n}}{p_0(r-1)^2}\right)^{\frac{1}{m-1}}\wedge n\succ n\rightarrow\infty$.
Consequently $\log\frac{n}{k_*}=O(1)$.

If $\frac{p_1}{p_0}\rightarrow \infty$, then $p_0=o(1)$ and  by Lemma \ref{lem:0} and condition (\ref{cond:3}), we get that
$H_{p_0}(p_1)=p_1\log\frac{p_1}{p_0}+O(p_1)\prec  \frac{\log\frac{N}{n}}{n^{m-1}}$.
As a result,
\begin{equation}\label{lem1:1}
\frac{p_1}{p_0}\log\frac{p_1}{p_0}\prec \frac{\log\frac{N}{n}}{n^{m-1}p_0}.
\end{equation}
By condition (\ref{cond:1}), it follows that
$\log\frac{p_1}{p_0}\prec \log \frac{\log\frac{N}{n}}{n^{m-1}p_0}=o\left(\log \frac{N}{n}\right)$.
By direct caculations, it holds that $1+\frac{(p_1-p_0)^2}{p_0(1-p_0)}-\frac{p_1}{p_0}=\frac{(p_1-p_0)(p_1-1)}{p_0(1-p_0)}\leq 0$.
Therefore, we have
$k_*\succ\left(\frac{\log\frac{N}{n}}{\log\frac{p_1}{p_0}}\right)^{\frac{1}{m-1}}\rightarrow\infty$.

By (\ref{lem1:1}), $p_1\prec \frac{\log\frac{N}{n}}{n^{m-1}}$. Then 
\begin{eqnarray}
\left(\frac{n}{k_*}\right)^{m-1}\prec\left(\frac{n}{\left[\frac{\log \frac{N}{n}}{\log(1+\frac{p_1^2}{p_0})}\right]^{\frac{1}{m-1}}}\right)^{m-1}\prec \frac{n^{m-1}p_1^2}{p_0\log\frac{N}{n}}\prec\frac{n^{m-1}}{p_0\log\frac{N}{n}}\frac{\log^2\frac{N}{n}}{n^{2(m-1)}}=\frac{\log\frac{N}{n}}{n^{m-1}p_0}.
\label{eq:20}
\end{eqnarray}
Combining (\ref{eq:20}) with condition (\ref{cond:1}), it follows that $\log\frac{n}{k_{*}}=o\left(\log \frac{N}{n}\right)$.
\end{proof}
Let $\theta_q=\log\frac{q(1-p_0)}{p_0(1-q)}$ for any $q\in(0,1)$, and $\Lambda(\theta)=\log(1-p_0+p_0e^{\theta})$.
\begin{Lemma}\label{lem2}
Under condition (\ref{cond:1}), for each $k\in [k_{min}+1,n]$, there exists $q_k\in (p_0,1)$ such that
\[
\theta_{q_k}\leq 2\theta_{p_1}\,\,\,\,\text{and}\,\,\,\, \frac{(k-1)(k-2)\ldots (k-m+1)}{m!}H_{p_0}(q_k)=\log\frac{N}{k}+2.
\]
\end{Lemma}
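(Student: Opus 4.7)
The plan is to produce $q_k$ by an intermediate value argument on the interval $[p_0, q^*]$, where $q^*$ is the unique point in $(p_0,1)$ with $\theta_{q^*}=2\theta_{p_1}$. Consider the function
\[
g(q) = \frac{(k-1)(k-2)\cdots (k-m+1)}{m!}\, H_{p_0}(q),
\]
which is continuous and strictly increasing on $[p_0,1)$ by Lemma \ref{lem:0}, with $g(p_0)=0$. Solving $\theta_{q^*}=2\theta_{p_1}$ explicitly yields $q^* = p_1^2(1-p_0)/\bigl(p_1^2(1-p_0)+p_0(1-p_1)^2\bigr) \in (p_1,1)$. Any $q_k\in[p_0,q^*]$ automatically satisfies $\theta_{q_k}\leq 2\theta_{p_1}$ by monotonicity of $\theta_q$ in $q$, so it suffices to verify $g(q^*)\geq \log(N/k)+2$; IVT will then deliver the unique $q_k\in(p_0,q^*]$ with $g(q_k)=\log(N/k)+2$.

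The core inequality splits into (i) a lower bound on $H_{p_0}(q^*)$ in terms of $\Delta$ and (ii) invoking the definition of $k_{min}$. For (i), I would exploit the exponential-family identity $H_{p_0}(q)=\theta_q\, q-\Lambda(\theta_q)$. The algebraic fact $p_0(1-p_1)^2+p_1^2(1-p_0)=p_0(1-p_0)+(p_1-p_0)^2=p_0(1-p_0)e^\Delta$ yields the clean formulas $\Lambda(\theta_{p_1})=\log\frac{1-p_0}{1-p_1}$ and $\Lambda(2\theta_{p_1})=\Delta+2\log\frac{1-p_0}{1-p_1}$, together with $q^*-p_1=p_1(p_1-p_0)(1-p_1)/\bigl(p_0(1-p_0)e^\Delta\bigr)>0$. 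Substituting into $H_{p_0}(q^*)=2q^*\theta_{p_1}-\Lambda(2\theta_{p_1})$ gives the identity
\[
H_{p_0}(q^*)=2(q^*-p_1)\theta_{p_1}+2H_{p_0}(p_1)-\Delta,
\]
and hence $H_{p_0}(q^*)\geq 2H_{p_0}(p_1)-\Delta$. A case-by-case inspection of the three regimes $p_1/p_0\to 1$, $p_1/p_0\to r\in(1,\infty)$, and $p_1/p_0\to\infty$ (using Lemma \ref{lem:0}) should then upgrade this to $H_{p_0}(q^*)\geq c\Delta$ for some absolute constant $c>0$.

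For (ii), the definition of $k_{min}$ gives, for $k\geq k_{min}+1$,
\[
(k^{m-1}-1)\Delta\geq m!\log\tfrac{Nk_*}{n^2}-\log\!\bigl(\log\tfrac{n^{m-1}}{\log(N/n)}\wedge\log\tfrac{N}{n}\bigr).
\]
Since Lemma \ref{lem:1} gives $k_*\sim k_{min}$ and $\log(n/k_{min})=o(\log(N/n))$, the right-hand side reduces to $m!\log(N/k)\cdot(1+o(1))$. Combining with (i) and the asymptotic $(k-1)^{(m-1)}\sim k^{m-1}$ (using $k_{min}\to\infty$ from Lemma \ref{lem:1}) yields $g(q^*)\geq c\log(N/k)(1+o(1))$, which exceeds $\log(N/k)+2$ for all large $N$ because $\log(N/k)\to\infty$ (as $k\leq n \ll N$ under (\ref{cond:1})).

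The main obstacle is step (i) in the contiguous regime $p_1/p_0\to 1$, where $2H_{p_0}(p_1)=\Delta\bigl(1+o(1)\bigr)$, so the simple bound $2H_{p_0}(p_1)-\Delta$ is only $o(\Delta)$ and the lower bound on $H_{p_0}(q^*)$ must instead come from the positive term $2(q^*-p_1)\theta_{p_1}$. The explicit formula for $q^*-p_1$ combined with $\theta_{p_1}\sim (p_1-p_0)/(p_0(1-p_0))$ shows $2(q^*-p_1)\theta_{p_1}\sim 2\Delta$, so that $H_{p_0}(q^*)\sim 2\Delta$ in this regime; handling the other two regimes of Lemma \ref{lem:0} requires analogous but slightly different bookkeeping and is where I expect the routine but delicate work to accumulate.
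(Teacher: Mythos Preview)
Your overall architecture---define $q^*$ via $\theta_{q^*}=2\theta_{p_1}$, then invoke IVT on $[p_0,q^*]$---is exactly what the paper does (its $\tilde q$ is your $q^*$). The exponential-family identity $H_{p_0}(q^*)=2(q^*-p_1)\theta_{p_1}+2H_{p_0}(p_1)-\Delta$ is correct and is a cleaner way to access $H_{p_0}(q^*)$ than the paper's more ad hoc case analysis.

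The gap is in step (i). You assert that a constant $c>0$ with $H_{p_0}(q^*)\ge c\Delta$ suffices, and then conclude $g(q^*)\ge c\log(N/k)(1+o(1))\ge \log(N/k)+2$. But this last inequality requires $c>1$, not merely $c>0$; with $c<1$ the bound $c\log(N/k)(1+o(1))$ stays below $\log(N/k)+2$. More seriously, a \emph{fixed} constant $c>1$ is not always available. In the sub-regime $p_1/p_0\to\infty$, $p_1^2/p_0\to\infty$, one has $q^*\to 1$, so $H_{p_0}(q^*)\sim\log\frac{1}{p_0}$ while $\Delta\sim\log\frac{p_1^2}{p_0}$, giving $H_{p_0}(q^*)/\Delta\to\bigl(1-\frac{\log p_1^2}{\log p_0}\bigr)^{-1}$. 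When additionally $\frac{\log p_1^2}{\log p_0}=o(1)$, this ratio tends to $1$ from above and no uniform $1+\epsilon$ lower bound exists. In this case your identity does not help either: $2(q^*-p_1)\theta_{p_1}$ is of order $\log\frac{1}{p_1}$, which is $o(\Delta)$ here.

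The paper resolves this sub-regime by abandoning the bound $H_{p_0}(\tilde q)\ge(1+\epsilon)\Delta$ and instead verifying the target inequality directly, exploiting the extra $-\log\bigl(\log\frac{n^{m-1}}{\log(N/n)}\wedge\log\frac{N}{n}\bigr)$ term in the definition of $k_{min}$: one shows that the resulting slack in $(k_{min}^{m-1}-1)\Delta$ compensates for $H_{p_0}(\tilde q)/\Delta$ being only $1+o(1)$, via the estimate $\frac{\log(n/k_*)+\log\log(n^{m-1}/\log(N/n))}{\log(N/n)}\ll \frac{\log(1/p_1)}{\log(1/p_0)}$. This is the genuinely delicate part of the argument, and your proposal treats it as ``routine bookkeeping'' when it is in fact a different mechanism from the other regimes.
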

\begin{proof}[Proof of Lemma \ref{lem2}] 
Let $\tilde{q}\in(0,1)$ be uniquely determined by
$\frac{\tilde{q}}{1-\tilde{q}}=\frac{p_1^2(1-p_0)}{p_0(1-p_1)^2}$.
It's easy to verify that $\theta_{\tilde{q}}=2\theta_{p_1}$. 
Note that $H_{p_0}(t)$ is continuous and monotone increasing in $t\in(p_0,\tilde{q})$.
Since for $k\geq k_{min}$, we have
\[
\frac{(k-1)(k-2)\cdots (k-m+1)}{m!}H_{p_0}(\tilde{q})\geq \frac{(k_{min}-1)(k_{min}-2)\cdots (k_{min}-m+1)}{m!}H_{p_0}(\tilde{q}),
\]
and
$\log\frac{N}{k_{min}}\geq \log\frac{N}{k}$.
Therefore, it suffices to prove 
\begin{equation}\label{lemma5.3}
\frac{(k_{min}-1)(k_{min}-2)\cdots (k_{min}-m+1)}{m!}H_{p_0}(\tilde{q})\geq \log\frac{N}{k_{min}}+2.
\end{equation}
By Lemma \ref{lem:1}, $\log\frac{n}{k_{min}}=o\left(\log \frac{N}{n}\right)$, hence we get 
\[\log\frac{N}{k_{min}}+2=\log\frac{N}{n}+\log\frac{n}{k_{min}}+2=\log\left(\frac{N}{n}\right)(1+o(1)).\]
Recall that $k^*\sim k_{min}$ by Lemma \ref{lem:1}. Then we only need to prove
$k_*^{m-1}H_{p_0}(\tilde{q})\geq m! (1+\epsilon)\log\frac{N}{n}$
for some constant $\epsilon>0$.

Observe that $k_*<n$. By definition, $k_*\le n$.
If $k_{*}=n$, then $k_{min}+1\sim k_{*}+1=n+1$ by Lemma \ref{lem:1}. Hence there doesn't exist any $k\in [k_{min}+1,n]$ and we have nothing to prove. Then we have $k_*<n$. In this case,
$k_*\geq \left(\frac{m!\log\frac{N}{n}}{\Delta}\right)^{\frac{1}{m-1}}$ which implies
$k_*^{m-1}H_{p_0}(\tilde{q})\geq \frac{m!\log\frac{N}{n}}{\Delta}H_{p_0}(\tilde{q})$.

In the following three regimes: $\frac{p_1}{p_0}\rightarrow1$; $\frac{p_1}{p_0}\rightarrow r\in (1,\infty)$; $\frac{p_1}{p_0}\rightarrow\infty$ with $\frac{p_1^2}{p_0}\rightarrow0$ or $\frac{p_1^2}{p_0}\rightarrow r_2>0$,
it can be shown that 
\begin{equation}\label{eqn:Hp0}
    H_{p_0}(\tilde{q})\geq (1+\epsilon)\Delta.
\end{equation}
The proof of (\ref{eqn:Hp0}) is almost identical to \cite{AV14}, which is omitted.
Consider $\frac{p_1^2}{p_0}\rightarrow\infty$ (hence, $\frac{p_1}{p_0}\rightarrow\infty$).
Recall that $k_*<n$. By definition, we conclude 
$\tilde{q}=1-o(1)$. Then $H_{p_0}(\tilde{q})\sim\log\frac{1}{p_0}$ by Lemma \ref{lem:0}. Moreover, it's easy to verify $\Delta\sim\log\frac{p_1^2}{p_0}$. As a result,
$\frac{H_{p_0}(\tilde{q})}{\Delta}\sim \left(1-\frac{\log p_1^2}{\log p_0}\right)^{-1}$.
If $\frac{\log p_1^2}{\log p_0}\geq \delta>0$ for some $\delta>0$, then (\ref{eqn:Hp0}) holds.
Next we consider $\frac{\log p_1^2}{\log p_0}=o(1)$,
in which we will show (\ref{lemma5.3}). Since $\tilde{q}\geq 1-\frac{p_0}{p_1^2}$, we have
$\frac{H_{p_0}(\tilde{q})}{\Delta}-1\geq \left(2+o(1)\right)\frac{\log \frac{1}{p_1}}{\log \frac{1}{p_0}}$.
Then we get
\begin{eqnarray}\nonumber
&&\frac{(k_{min}-1)(k_{min}-2)\cdots (k_{min}-m+1)}{m!}H_{p_0}(\tilde{q})\\
&\geq &\left[\log\frac{N}{k_{min}}+2\right]\left[1-\frac{2+o(1)+2\log\frac{n}{k_*}+
\log\log\frac{n^{m-1}}{\log\frac{N}{n}}}{\log\frac{N}{n}}\right]
\times \left[1+(2+o(1))\frac{\log\frac{1}{p_1}}{\log\frac{1}{p_0}}\right].
\label{eqn:23}
\end{eqnarray}
To show (\ref{lemma5.3}), it suffices to show that the product of the 
last two items in (\ref{eqn:23}) is greater than or equal to one.
Since $(1-a)(1+b)=1+b(1-a-\frac{a}{b})\geq 1+\epsilon b\geq 1$ if $a=o(b)$ and $a=o(1)$,
it suffices to prove that 
\begin{equation}\label{kstarn}
\frac{\log\frac{n}{k_*}+\log\log\frac{n^{m-1}}{\log\frac{N}{n}}}{\log\frac{N}{n}}\ll\frac{\log\frac{1}{p_1}}{\log\frac{1}{p_0}}.
\end{equation}
By (\ref{lem1:1}), we get that
$\frac{p_1^2}{p_0}\prec\left[\frac{\log\frac{N}{n}}{n^{m-1}}\right]^2\frac{1}{p_0}$.
Hence
\[
2\log\log\frac{N}{n}-\log n^{m-1}+\log \frac{1}{n^{m-1}p_0}=\log\left(\left[\frac{\log\frac{N}{n}}{n^{m-1}}\right]^2\frac{1}{p_0}\right)\succ \log \frac{p_1^2}{p_0}\rightarrow\infty.
\]
Then one has $2\log\log\frac{N}{n}+\log \frac{1}{n^{m-1}p_0}\gg\log n^{m-1}$.
Since
$\frac{\log\left(\log\frac{N}{n}\right)^2+\log(1\vee \frac{1}{n^{m-1}p_0})}{\log\frac{N}{n}}=o(1)$,
we have $\log n^{m-1}=o\left(\log\frac{N}{n}\right)$. Consequently,
$\frac{\log\frac{1}{p_0}}{\log\frac{N}{n}}=\frac{\log n^{m-1}+\log\frac{1}{n^{m-1}p_0}}{\log\frac{N}{n}}=o(1)$.
Hence, to prove (\ref{kstarn}), we only need to show
$\log \frac{n}{k_*}+\log\log\frac{n^{m-1}}{\log\frac{N}{n}}=O(\log \frac{1}{p_1})$.

Note that
\[\log \left[\frac{n}{k_*} \right]<\log \left[\frac{n}{k_*} \right]^{m-1}\prec \log\frac{n^{m-1}}{1+\frac{\log\frac{N}{n}}{\Delta}}\prec \log\frac{n^{m-1}}{\log\frac{N}{n}}+\log \Delta\prec \log\frac{n^{m-1}}{\log\frac{N}{n}}+\log\log \frac{p_1^2}{p_0}.\]
Since $p_1\log \frac{p_1}{p_0}\prec \frac{\log\frac{N}{n}}{n^{m-1}}$ by (\ref{lem1:1}), it's easy to verify that
\[\frac{\log \frac{n}{k_*}+\log\log\frac{n^{m-1}}{\log\frac{N}{n}}}{\log \frac{1}{p_1} }\leq \frac{\log\frac{n^{m-1}}{\log\frac{N}{n}}+\log\log \frac{p_1^2}{p_0}+\log\log\frac{n^{m-1}}{\log\frac{N}{n}}}{\log\frac{n^{m-1}}{\log\frac{N}{n}}+\log\log \frac{p_1}{p_0}}=O(1).\]
Proof is complete.
\end{proof}

\begin{Lemma}\label{lem:3}
$\min_{k\in [k_{min}+1,n]}\left\{\frac{(k-1)(k-2)\cdots (k-m+1)}{m!}H_{p_1}(q_k)-\log\frac{n}{k}\right\}\rightarrow\infty$.
\end{Lemma}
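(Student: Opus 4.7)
The plan is to combine Lemma \ref{lem2} with condition (\ref{cond:3}). By direct algebraic expansion, one verifies the exponential family identity
\[
H_{p_0}(q) = H_{p_0}(p_1) + H_{p_1}(q) + (q-p_1)\theta_{p_1}.
\]
Writing $A_k := (k-1)(k-2)\cdots(k-m+1)/m!$ and substituting the defining equation $A_k H_{p_0}(q_k) = \log(N/k) + 2$ from Lemma \ref{lem2}, I obtain
\[
A_k H_{p_1}(q_k) - \log\tfrac{n}{k} = \log\tfrac{N}{n} + 2 - A_k H_{p_0}(p_1) - A_k (q_k-p_1)\theta_{p_1}.
\]
Thus the task reduces to upper bounding the two correction terms on the right-hand side, uniformly in $k \in [k_{\min}+1, n]$.

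The first correction is handled by condition (\ref{cond:3}): it provides some $\eta > 0$ with $A_n H_{p_0}(p_1) \leq (1-\eta)\log(N/n)$ eventually, and since $A_k \leq A_n$ for $k\leq n$, the same bound holds for $A_k H_{p_0}(p_1)$. For the second correction I exploit Lemma \ref{lem2}'s structural constraint $\theta_{q_k} \leq 2\theta_{p_1}$. Using the integral representations
\[
H_{p_1}(q_k) = \int_{\theta_{p_1}}^{\theta_{q_k}}(\eta - \theta_{p_1})\Lambda''(\eta)d\eta, \qquad (q_k - p_1)\theta_{p_1} = \theta_{p_1}\int_{\theta_{p_1}}^{\theta_{q_k}}\Lambda''(\eta)d\eta,
\]
together with $\theta_{q_k} - \theta_{p_1} \leq \theta_{p_1}$, I aim to establish a Young-type inequality
\[
A_k(q_k-p_1)\theta_{p_1} \leq \alpha\, A_k H_{p_1}(q_k) + \tfrac{1}{\alpha}\, A_k H_{p_0}(p_1)
\]
for a suitable $\alpha > 0$ (possibly with a regime-dependent factor in place of $1/\alpha$). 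Plugging both bounds back and choosing $\alpha$ large enough that $(1 + 1/\alpha)(1-\eta) < 1 - \eta/2$ yields $(1+\alpha) A_k H_{p_1}(q_k) \geq (\eta/2)\log(N/n) + \log(n/k) + 2$. Subtracting $\log(n/k)$ and invoking Lemma \ref{lem:1}'s bound $\log(n/k_{\min}) = o(\log(N/n))$ then forces divergence uniformly in $k$.

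The main obstacle is the Young-type inequality itself. It follows from AM-GM applied to the Taylor expansions supplied by Lemma \ref{lem:0} in the regimes $p_1/p_0 \to 1$ and $p_1/p_0 \to r \in (1,\infty)$, but the regime $p_1/p_0 \to \infty$ is delicate because $\Lambda''(\eta)$ can vary drastically across $[\theta_{p_1}, \theta_{q_k}]$, so that the naive ratio $(\tilde q-p_1)\theta_{p_1}/H_{p_0}(p_1)$ can be unbounded. The resolution is to exploit the tension between conditions (\ref{cond:1}) and (\ref{cond:3}): they force $q_k$ to lie sufficiently close to $p_1$ (rather than at the extremal $\tilde q$), keeping the effective Young's constant bounded and the argument valid in this regime as well.
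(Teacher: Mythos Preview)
Your algebraic identity $H_{p_0}(q)=H_{p_0}(p_1)+H_{p_1}(q)+(q-p_1)\theta_{p_1}$ and the resulting reduction
\[
A_kH_{p_1}(q_k)-\log\tfrac{n}{k}=\log\tfrac{N}{n}+2-A_kH_{p_0}(p_1)-A_k(q_k-p_1)\theta_{p_1}
\]
are both correct, and this is a cleaner starting point than the paper's direct case analysis. The paper never writes this identity; instead it bounds the ratio $H_{p_1}(q_k)/H_{p_0}(q_k)$ from below in each of the three regimes $p_1/p_0\to 1$, $p_1/p_0\to r\in(1,\infty)$, $p_1/p_0\to\infty$, using the asymptotic formulas from Lemma~\ref{lem:0}. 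In the first two regimes your Young-type inequality with a bounded $\alpha$ does go through essentially as you describe.

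The gap is in the regime $p_1/p_0\to\infty$. Your proposed resolution---that conditions (\ref{cond:1}) and (\ref{cond:3}) force $q_k$ to lie sufficiently close to $p_1$---is false. In the paper's own analysis of this regime one finds $q_k/p_1>\tfrac{n^{m-1}}{2(k-1)\cdots(k-m+1)}$, so $q_k/p_1\to\infty$ whenever $k=o(n)$, and such $k$ do occur in $[k_{\min}+1,n]$ since Lemma~\ref{lem:1} only gives $\log(n/k_{\min})=o(\log(N/n))$. In this situation $(q_k-p_1)\theta_{p_1}\sim q_k\log(p_1/p_0)$ while $H_{p_1}(q_k)\sim q_k\log(q_k/p_1)$ and $H_{p_0}(p_1)\sim p_1\log(p_1/p_0)\ll q_k\log(p_1/p_0)$; the constraint $\theta_{q_k}\le 2\theta_{p_1}$ only gives $\log(q_k/p_1)\le\log(p_1/p_0)$, so the smallest admissible $\alpha$ in your Young inequality is $\log(p_1/p_0)/\log(q_k/p_1)$, which need not be bounded uniformly in $k$. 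With $\alpha$ unbounded the factor $(1+\alpha)^{-1}$ in front of $\log(N/n)$ kills the divergence you need. Conversely, for $k\asymp n$ one can have $q_k/p_1\to c\in(1,\infty)$, and then $(q_k-p_1)\theta_{p_1}\sim(c-1)H_{p_0}(p_1)$, forcing $\beta\ge c-1$; there is no a priori reason $c$ stays below $\eta/(1-\eta)$, so $(1+\beta)(1-\eta)<1$ can fail as well. To close the argument in this regime you will need something closer to the paper's direct estimate of $H_{p_1}(q_k)/H_{p_0}(q_k)$, which exploits the precise relation between $\log(q_k/p_1)$, $\log(p_1/p_0)$, and $\log(N/n)$ coming from the defining equation for $q_k$ together with $\log(p_1/p_0)=o(\log(N/n))$.
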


\begin{proof}[Proof of Lemma \ref{lem:3}]
Note that 
$p_1\leq q_k\leq \tilde{q}$.  If $\frac{p_1}{p_0}\rightarrow1$,
we get $\tilde{q}\sim p_0$. Then for some constant $\eta_0\in(0,1)$, we get
\[\frac{(q_k-p_0)^2}{(p_1-p_0)^2}\sim \frac{H_{p_0}(q_k)}{H_{p_0}(p_1)}\geq \frac{n^{m-1}}{(1-\eta_0)(k-1)(k-2)\cdots (k-m+1)}\geq \frac{1}{1-\eta_0},\]
\[H_{p_1}(q_k)\succ H_{p_0}(q_k)[1-\sqrt{1-\eta_0}]^2,\]
and
\[\frac{(k-1)(k-2)\cdots (k-m+1)}{m!}H_{p_1}(q_k)\succ \frac{(k-1)(k-2)\cdots (k-m+1)}{m!}H_{p_0}(q_k)\geq \log\frac{N}{k}+2\gg 1\vee\log\frac{n}{k}.\]

If $\frac{p_1}{p_0}\rightarrow r\in(1,\infty)$, then $H_{p_0}(p_1)\rightarrow0$ implies $p_1=o(1)$. By definition of $\tilde{q}$, we conclude $\tilde{q}=o(1)$. We can derive
\[H_{p_1}(q_k)\succ H_{p_0}(q_k)\geq \frac{m!(\log\frac{N}{k}+2)}{(k-1)(k-2)\cdots (k-m+1)}.\]
Consequently, 
\[\frac{(k-1)(k-2)\cdots (k-m+1)}{m!}H_{p_1}(q_k)\succ \log\frac{N}{k}+2\geq\log\frac{N}{n}\gg 1\vee\log\frac{n}{k}.\]

If $\frac{p_1}{p_0}\rightarrow \infty$, then $H_{p_0}(p_1)\rightarrow0$ implies $p_1=o(1)$.  Note  that $\frac{q_k}{p_0}\geq\frac{p_1}{p_0}\rightarrow\infty$. Then for some small constant $\eta_0\in(0,1)$,
\begin{eqnarray}\nonumber
&&\frac{q_k}{p_1}\left(1+\frac{\log\frac{q_k}{p_1}}{\log\frac{p_1}{p_0}}\right)\sim\frac{H_{p_0}(q_k)}{H_{p_0}(p_1)}\geq \frac{n^{m-1}H_{p_0}(q_k)}{m!(1-\eta_0)\log\frac{N}{n}}
=\frac{n^{m-1}}{(1-\eta_0)(k-1)(k-2)\cdots (k-m+1)}\frac{\log\frac{N}{n}+2}{\log\frac{N}{n}}\\   \nonumber
&\geq& \frac{n^{m-1}}{(1-\eta_0)(k-1)(k-2)\cdots (k-m+1)}.
\end{eqnarray}
By definition of $\tilde{q}$, we get $1\leq q_k/p_1\leq \tilde{q}/p_1\leq p_1/p_0$. Hence, $\frac{q_k}{p_1}>\frac{n^{m-1}}{2(k-1)(k-2)\cdots (k-m+1)}$. 
Note that $\frac{q_k}{p_0}\leq \frac{p_1^2}{p_0^2}$. Besides, $\log\frac{p_1}{p_0}=o\left(\log \frac{N}{n}\right)$ by the proof of Lemma \ref{lem:1}. If $k=o(n)$, then $\frac{q_k}{p_1}\rightarrow \infty$. In this case, we have
\begin{eqnarray}\nonumber
&&\frac{(k-1)(k-2)\cdots (k-m+1)}{m!}H_{p_1}(q_k)\\ \label{hpqk}
&=&\frac{H_{p_1}(q_k)}{H_{p_0}(p_k)}\frac{(k-1)(k-2)\cdots (k-m+1)}{m!}H_{p_0}(p_k)\\ \nonumber
&=&\frac{H_{p_1}(q_k)}{H_{p_0}(p_k)}\left(\log\frac{N}{k}+2\right)\geq \frac{q_k\log\frac{q_k}{p_1}-q_k+(1-q_k)p_1}{q_k\log\frac{q_k}{p_0} }\log\frac{N}{n}(1+o(1))\\  \nonumber
&\succ&\frac{\log\frac{N}{n}}{\log\frac{p_1}{p_0}}\left(\log\frac{q_k}{p_1}-1+\frac{p_1}{p_k}-p_1\right)\succ\frac{\log\frac{N}{n}}{\log\frac{p_1}{p_0}}\log\frac{n}{k}\gg\log\frac{n}{k}.
\end{eqnarray}
Suppose $n\asymp k$. If $\frac{q_k}{p_1}\rightarrow \infty$, the proof is the same as (\ref{hpqk}). If $\frac{q_k}{p_1}=O(1)$, then $H_{p_1}(q_k)/q_k\succ1$. In this case, similar to (\ref{hpqk}), we have
\[
\frac{(k-1)(k-2)\cdots (k-m+1)}{m!}H_{p_1}(q_k)
\gg1.
\]
\end{proof}

\begin{proof}[Proof of Theorem \ref{thm:1}] For a given subset $S\subset [N]$ with $|S|=n$, the likelihood ratio is equal to
\begin{eqnarray}\nonumber
L_S&=&\frac{\prod_{\substack{i_1<\cdots<i_m\\ \{i_1,\ldots, i_m\}\subset S}}p_1^{A_{i_1\ldots i_m}}(1-p_1)^{1-A_{i_1\ldots i_m}}\prod_{\substack{i_1<\cdots<i_m\\ \{i_1,\ldots, i_m\}\not\subset S}}p_0^{A_{i_1\ldots i_m}}(1-p_0)^{1-A_{i_1\ldots i_m}}}{\prod_{\substack{i_1<\cdots<i_m\\ \{i_1,\ldots, i_m\}\subset S}}p_0^{A_{i_1\ldots i_m}}(1-p_0)^{1-A_{i_1\ldots i_m}}\prod_{\substack{i_1<\cdots<i_m\\ \{i_1,\ldots, i_m\}\not\subset S}}p_0^{A_{i_1\ldots i_m}}(1-p_0)^{1-A_{i_1\ldots i_m}}}\\ \nonumber
&=&\left(\frac{p_1(1-p_0)}{p_0(1-p_1)}\right)^{A_S}\left(\frac{1-p_1}{1-p_0}\right)^{n^{(m)}}=e^{(\theta_{p_1}A_S-n^{(m)}\Lambda(\theta_{p_1}))},
\end{eqnarray}
where
\[\theta_{p_1}=\log\frac{p_1(1-p_0)}{p_0(1-p_1)},\hskip 1cm \Lambda(\theta_{p_1})=\log (1-p_0+p_0e^{p_1}).\]
Then the unconditional likelihood ratio statistic is expressed as
\[L=\binom{N}{n}^{-1}\sum_{|S|=n}L_S.\]
We truncate the likelihood ratio statistic as in \cite{AV14,BI13} to get
\[\tilde{L}=\binom{N}{n}^{-1}\sum_{|S|=n}L_SI_{\Gamma_S},\]
where $I_E$ is an indicator function for event $E$ and $\Gamma_S=\cap_{T\subset S,|T|\geq k_{min}+1} \{A_T\leq w_{|T|}=q_{|T|}|T|^{(m)}\}$. We will show $\mathbb{E}_0\tilde{L}=1+o(1)$ and $\mathbb{E}_0\tilde{L}^2\leq 1+o(1)$, where $\mathbb{E}_0$ represents expectation under $H_0$.

Consider the first-order moment. It is easy to verify $\mathbb{E}_0\tilde{L}=\mathbb{P}_S\Gamma_S$.
Then by Lemma \ref{lem:3}, it follows
\begin{eqnarray}\nonumber
\mathbb{P}_S\Gamma_S^c&=&\mathbb{P}_S(\cup_{T\subset S,|T|\geq k_{min}+1} \{A_T> w_{|T|}\})\\   \nonumber
&\leq&\sum_{k=k_{min}+1}^n\sum_{T\subset S,|T|=k}\mathbb{P}_S(A_T> w_{|T|})\\   \nonumber
&\leq&\sum_{k=k_{min}+1}^n\binom{n}{k}\mathbb{P}_S(Bin(k^{(m)},p_1)> q_kk^{(m)})\\   \nonumber
&\leq&\sum_{k=k_{min}+1}^ne^{k\log\frac{ne}{k}-k^{(m)}H_{p_1}(q_k)}\\   \nonumber
&=&\sum_{k=k_{min}+1}^ne^{k\left[\log\frac{ne}{k}-\frac{(k-1)(k-2)\cdots (k-m+1)}{m!}H_{p_1}(q_k)\right]}\rightarrow0.
\end{eqnarray}

Consider the second-order moment. Clearly, we have the following
\[\tilde{L}^2=\binom{N}{n}^{-2}\sum_{|S_1|=n,|S_2|=n}L_{S_1}I_{\Gamma_{S_1}}L_{S_1}I_{\Gamma_{S_1}}\]
and
\[
\mathbb{E}_0\tilde{L}^2=\mathbb{E}_0L_{S_1}L_{S_1}I_{\Gamma_{S_1}\cap\Gamma_{S_2}}=\mathbb{E}_0e^{\theta_{p_1}(A_{S_1}+A_{S_2})-2n^{(m)}\Lambda(\theta_{p_1})}I_{\Gamma_{S_1}\cap\Gamma_{S_2}}.
\]
Observe that $\Gamma_{S_1}\cap\Gamma_{S_2}\subset \{A_{S_1\cap S_2}\leq w_K\}$. Define $\tilde{S}_1=\{(i_1,\ldots,i_m)|\{i_1,\ldots,i_m\}\subset S_1, \{i_1,\ldots,i_m\}\not\subset S_1\cap S_2\}$ and $\tilde{S}_2=\{(i_1,\ldots,i_m)|\{i_1,\ldots,i_m\}\subset S_2, \{i_1,\ldots,i_m\}\not\subset S_1\cap S_2\}$. Then it's easy to verify that
$A_{S_1}+A_{S_2}=A_{\tilde{S}_1}+A_{\tilde{S}_2}+2A_{S_1\cap S_2}$,
and the tree terms are independent. By the definition of $\tilde{S}_1$ and $\tilde{S}_2$, one has $|\tilde{S}_1|=|\tilde{S}_2|=\binom{n}{m}-\binom{K}{m}$, where $K=|S_1\cap S_2|$.
Then we have
\[\mathbb{E}_0\tilde{L}^2\leq I\times II\times III,\]
where
\begin{eqnarray}\nonumber
I&=&\mathbb{E}_0e^{\theta_{p_1}A_{\tilde{S}_1}-\Lambda(\theta_{p_1})\left[\binom{n}{m}-\binom{k}{m}\right]}=1,\\ \nonumber
II&=&\mathbb{E}_0e^{\theta_{p_1}A_{\tilde{S}_2}-\Lambda(\theta_{p_1})\left[\binom{n}{m}-\binom{k}{m}\right]}=1,\\ \nonumber
III&=&\mathbb{E}_0e^{\theta_{p_1}A_{S_1\cap S_2}-\Lambda(\theta_{p_1})\left[2\binom{k}{m}\right]}I_{A_{S_1\cap S_2}\leq w_K}.
\end{eqnarray}
It is easy to check that
$III\leq e^{\Delta K^{(m)} }$ for $K\leq k_{min}$;
$III\leq e^{\Delta_K K^{(m)}}$ for $K>k_{min}$,
where $\Delta_K :=-2H_{p_1}(q_K)+H_{p_0}(q_K)$. Then
\[\mathbb{E}_0\tilde{L}^2\leq \mathbb{E}I[K\leq k_{min}]e^{\Delta K^{(m)} }+ \mathbb{E}I[K\geq k_{min}]e^{\Delta_K K^{(m)} }.\]

By condition (\ref{cond:2}), we can take a sequence $b$
with $b\rightarrow\infty$ such that
$\frac{p_1-p_0}{\sqrt{p_0}}\frac{b^{\frac{m}{2}}n^m}{N^{\frac{m}{2}}}=o(1)$.
Define $k_0=nb\rho\sim bn^2/N$ with $\rho=n/(N-n)$.

For $k_0\leq m-1$, we have
\[\mathbb{E}I[K\leq k_{min}]e^{\Delta K^{(m)} }\leq \mathbb{E}I[K\leq k_{min}]\leq 1.\]

For $k_0> m-1$, we have
\[\mathbb{E}I[K\leq k_{min}]e^{\Delta K^{(m)} }]\leq e^{\Delta k_0^{(m)} }\leq e^{\frac{(p_1-p_0)^2}{p_0(1-p_0)}(bn\rho)^m}\leq e^{\frac{O(1)(p_1-p_0)^2}{p_0}\frac{b^mn^{2m}}{N^m}}=1+o(1).\]

For $k_0+1\leq K\leq k_{min}$, we have
\begin{eqnarray}\nonumber
\mathbb{E}I[K\geq k_{min}]e^{\Delta K^{(m)} }&\leq& \sum_{k=k_0+1}^{k_{min}}P(K\geq k)e^{\Delta K^{(m)} }
=\sum_{k=k_0+1}^{k_{min}}e^{\Delta K^{(m)} -nH_{\rho}(\frac{k}{n})}\\
&=&\sum_{k=k_0+1}^{k_{min}}e^{k\left[\frac{(k-1)(k-2)\cdots (k-m+1)}{m!}\Delta  -\log\frac{k}{n\rho}+o(1)\right]},
\end{eqnarray}
where we have used the following fact
\begin{eqnarray*}nH_{\rho}(\frac{k}{n})&=&k\log\frac{k}{n\rho}+(n-k)\log(1-\frac{k}{n})-(n-k)\log(1-\rho)\\
&\sim& k\log\frac{k}{n\rho}+(n-k)\log(1-\frac{k}{n})-\frac{(n-k)n}{N}=k\log\frac{k}{n\rho}+o(k)
\end{eqnarray*}

Next we prove
\[\frac{(k-1)(k-2)\cdots (k-m+1)}{m!}\Delta  -\log\frac{k}{n\rho}\sim \frac{k^{m-1}}{m!}\Delta  -\log\frac{k}{n\rho}\rightarrow-\infty.\]
Note that the function $f(x)=\frac{x^{m-1}}{m!}\Delta -\log(x)+\log(n\rho)$ for $x>0$ attains minimum value at $x_0=\left(\frac{m!}{(m-1)\Delta}\right)^{\frac{1}{m-1}}< k_{min}$. Besides, it's increasing in $(x_0,\infty)$ and decreasing in $(0,x_0)$. Since $\frac{Nk_*}{n^2}\sim\frac{k_*}{n\rho}\sim \frac{k_{min}}{n\rho}$,
by definition of $k_{min}$, we have
$\frac{\Delta}{m!}k_{min}^{m-1}\leq \log\frac{k_{min}}{n\rho}-\log\log\frac{n^{m-1}}{\log\frac{N}{n}}$.
Hence, $f(k_{min})\rightarrow-\infty$. If $k_0> x_0$,  $f(k_0)\leq f(k)\leq f(k_{min})\rightarrow-\infty$.
If $k_0\leq x_0$, then $\frac{k_0^{m-1}\Delta}{m!}\leq 1$ and $\log\frac{k_0}{n\rho}=\log b\rightarrow\infty$. In this case, $f(k)\leq f(k_0)\vee f(k_{min})\rightarrow-\infty$.

For $K> k_{min}$, we have
\[\mathbb{E}I[K\geq k_{min}]e^{\Delta_K K^{(m)} }\leq \sum_{k=k_{min}}^{n}e^{k\left[\frac{(k-1)(k-2)\cdots (k-m+1)}{m!}\Delta_k  -\log\frac{k}{n\rho}+o(1)\right]}.\]
Note that
\begin{eqnarray}\nonumber
\frac{(k-1)(k-2)\cdots (k-m+1)}{m!}\Delta_k  -\log\frac{k}{n\rho}&=&-2\left(\frac{(k-1)(k-2)\cdots (k-m+1)}{m!}H_{p_1}(q_k)-\log\frac{n}{k}\right)\\ \nonumber
&+&\frac{(k-1)(k-2)\cdots (k-m+1)}{m!}H_{p_0}(q_k)-\log\frac{N}{k}+o(1),
\end{eqnarray}
which goes to $-\infty$ by Lemma \ref{lem2} and Lemma \ref{lem:3}. Then the proof is complete.
\end{proof}

\begin{proof}[Proof of Theorem \ref{thm:2} \ref{thm:2:I}]
 Under $H_0$, simple algebra yields
$\mathbb{E}_0W=N^{(m)}p_0$ and $\mathbb{V}_0W=N^{(m)}p_0(1-p_0)$.
Under $H_1$, we have
$\mathbb{E}_1W=N^{(m)}p_0+n^{(m)}(p_1-p_0)$ and
$\mathbb{V}_1W=N^{(m)}p_0(1-p_0)+n^{(m)}(p_1-p_0)(1-p_1-p_0)+(p_1-p_0)^2n^{2m-1}$.
Then
\[R:=\frac{\mathbb{E}_1W-\mathbb{E}_0W}{\sqrt{\mathbb{V}_1W+\mathbb{V}_0W}}=\frac{n^{(m)}(p_1-p_0)}{\sqrt{2N^{(m)}p_0(1-p_0)+n^{(m)}(p_1-p_0)(1-p_1-p_0)+(p_1-p_0)^2n^{2m-1}}}.\]
By condition (\ref{cond:4}), we have
$\frac{n^{(m)}(p_1-p_0)}{\sqrt{N^{(m)}p_0}}\rightarrow\infty$ and $\frac{n^{(m)}(p_1-p_0)}{\sqrt{(p_1-p_0)^2n^{2m-1}}}=\sqrt{n}\rightarrow\infty$.
If $n^{(m)}(p_1-p_0)(1-p_1-p_0)>N^{(m)}p_0$, then by condition (\ref{cond:4}),
we have
\[\frac{n^{(m)}(p_1-p_0)}{\sqrt{n^{(m)}(p_1-p_0)(1-p_1-p_0)}}>\sqrt{n^{(m)}(p_1-p_0)}\gg (N^{(m)}p_0)^{\frac{1}{4}}\rightarrow\infty.\]
Hence, $R\rightarrow\infty$. The proof is completed by Lemma 9 in \cite{AV14}.
\end{proof}

\begin{proof}[Proof of Theorem \ref{thm:2} \ref{thm:2:II}] Let $a=\eta p_0+(1-\eta) p_1$ for some $\eta$ with  $\eta=o(1)$ such that
\[
\limsup\limits_{N,n\to\infty} \frac{(n-1)(n-2)\cdots(n-m+1)H_{p_0}(a)}{m!\log\frac{N}{n}}>1,
\]
which is possible by condition (\ref{cond:5}). Then under $H_0$, one has
\begin{eqnarray*}
\mathbb{P}_0(W_n\geq a n^{(m)})&\leq& \binom{N}{n}\mathbb{P}_0(W_S\geq a n^{(m)})\\
&\leq& \binom{N}{n}e^{-n^{(m)}H_{p_0}(a)}\leq e^{n\left[\log\frac{Ne}{n}-\frac{(n-1)(n-2)\cdots(n-m+1)}{m!}H_{p_0}(a)\right]}=o(1).
\end{eqnarray*}

Under $H_1$, for a fixed $S_1$, it's easy to get $\mathbb{E}_1W_{S_1}=n^{(m)}p_1$ and  $\mathbb{V}_1W_{S_1}=n^{(m)}p_1(1-p_1)$. Then 
$W_{S_1}=n^{(m)}p_1+O_p(\sqrt{n^{(m)}p_1}))$,
from which it follows
\[\mathbb{P}_1(W_n\geq a n^{(m)})=\mathbb{P}_1\left(\frac{W_n-W_{S_1}}{\sqrt{n^{(m)}p_1}}\geq \frac{a n^{(m)}-W_{S_1}}{\sqrt{n^{(m)}p_1}}\right)=\mathbb{P}_1\left(\frac{W_n-W_{S_1}}{\sqrt{n^{(m)}p_1}}\geq \frac{n^{(m)}(a-p_1)}{\sqrt{n^{(m)}p_1}}+O_p(1)\right).\]
Note that
$\frac{n^{(m)}(p_1-a)}{\sqrt{n^{(m)}p_1}}=\frac{\eta n^{(m)}(p_1-p_0)}{\sqrt{n^{(m)}p_1}}$.
If $\frac{p_1}{p_0}\rightarrow r\in (1,\infty]$, let $\eta=(n^{(m)}p_1)^{-\frac{1}{4}}=o(1)$. Then
\[\frac{n^{(m)}(p_1-a)}{\sqrt{n^{(m)}p_1}}\sim \frac{\eta n^{(m)}p_1(1-\frac{1}{r})}{\sqrt{n^{(m)}p_1}}\rightarrow\infty.\]
If $\frac{p_1}{p_0}\rightarrow 1$, let $\eta=(\log\frac{N}{n})^{-\frac{1}{4}}=o(1)$. In this case, by $H_{p_0}(p_1)>\frac{m!\log\frac{N}{n}}{n^{m-1}}$, one has
\[\left(\frac{\eta n^{(m)}(p_1-p_0)}{\sqrt{n^{(m)}p_1}}\right)^2\sim\frac{\eta^2 n^{(m)}(p_1-p_0)^2}{p_0}>n\eta^2\log\frac{N}{n}\rightarrow\infty.\]
Proof is complete.
\end{proof}

\begin{Lemma}\label{Lemma2} Under the condition of Theorem \ref{thm:4},
\begin{equation}\label{cond4:4}
 \frac{n^{m}}{N}\frac{(p_1-p_0)^2}{p_0(1-p_0)}=o(1),\,\, \frac{n^{2m-2}}{N^{m-1}}\frac{(p_1-p_0)^2}{p_0(1-p_0)}=o(1),\,\, \frac{n^{2m-1}}{N^m}\frac{(p_1-p_0)^2}{p_0(1-p_0)}=o(1).
\end{equation}
\end{Lemma}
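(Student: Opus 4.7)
The plan is to prove all three bounds by combining conditions (\ref{cond4:1}) and (\ref{cond4:2}) through a case analysis on the ratio $p_1/p_0^\prime$, leveraging Lemma \ref{lem:0}.

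First I would reduce everything to bounds involving $p_0^\prime$ rather than $p_0$. The defining relation in (\ref{p0:prime}) gives $p_0 = p_0^\prime + \frac{n^{(m)}}{N^{(m)}}(p_1-p_0^\prime)$, from which $p_1 - p_0 = (1 - n^{(m)}/N^{(m)})(p_1-p_0^\prime)$ and therefore $(p_1-p_0)^2 \le (p_1-p_0^\prime)^2$. Similarly $p_0 \ge p_0^\prime$ and $1 - p_0 \ge (1-p_0^\prime)(1 - n^{(m)}/N^{(m)})$, so provided $n^{(m)}/N^{(m)}$ is bounded away from $1$ (which I would argue by a short contradiction: otherwise (\ref{cond4:2}) combined with (\ref{ucond:1}) would fail), we obtain
\[
\frac{(p_1-p_0)^2}{p_0(1-p_0)} \le C\cdot \frac{(p_1-p_0^\prime)^2}{p_0^\prime(1-p_0^\prime)}
\]
for an absolute constant $C$. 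It therefore suffices to prove each of the three inequalities with $p_0^\prime$ in place of $p_0$.

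Next I invoke Lemma \ref{lem:0} and split into three regimes according to $p_1/p_0^\prime$. In Regime A ($p_1/p_0^\prime \to 1$) the lemma gives $H_{p_0^\prime}(p_1) \sim (p_1-p_0^\prime)^2/(2 p_0^\prime(1-p_0^\prime))$, and in Regime B ($p_1/p_0^\prime \to r \in (1,\infty)$ forcing $p_0^\prime \to 0$) a direct computation gives $H_{p_0^\prime}(p_1) \asymp p_0^\prime \asymp (p_1-p_0^\prime)^2/(p_0^\prime(1-p_0^\prime))$. In both regimes condition (\ref{cond4:2}) then yields $\frac{(p_1-p_0^\prime)^2}{p_0^\prime(1-p_0^\prime)} \lesssim \frac{m!\log(N/n)}{n^{m-1}}$. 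Substituting into the three target expressions reduces them to $\frac{n\log(N/n)}{N}$, $\frac{n^{m-1}\log(N/n)}{N^{m-1}}$, and $\frac{n^m\log(N/n)}{N^m}$, which can be shown to be $o(1)$ using condition (\ref{ucond:1}).

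In Regime C ($p_1/p_0^\prime \to \infty$) Lemma \ref{lem:0} only gives $H_{p_0^\prime}(p_1) \sim p_1\log(p_1/p_0^\prime)$, which is much smaller than $(p_1-p_0^\prime)^2/p_0^\prime \asymp p_1^2/p_0^\prime$; here (\ref{cond4:2}) is too weak to control the quantity we need. Instead I use condition (\ref{cond4:1}) in the squared form $\frac{(p_1-p_0^\prime)^2}{p_0^\prime} = o\!\left(\frac{N^{(m+1)/2}}{n^{m+1}}\right)$, and substitute directly into each of the three target ratios, producing bounds of the form $o(N^{(m-1)/2}/n)$, etc., which are then checked to be $o(1)$ using the algebraic relationship among the three ratios and the rate restrictions in (\ref{ucond:1}). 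The main obstacle is precisely this Regime C verification, together with the boundary between regimes: the first target $\frac{n^m}{N}\cdot\frac{(p_1-p_0^\prime)^2}{p_0^\prime(1-p_0^\prime)}$ is the tightest of the three, and I expect to need to interlock (\ref{cond4:1}) and (\ref{cond4:2}) carefully — picking the stronger of the two algebraic bounds depending on where $n$ sits relative to $N^{(m+1)/(2(m+1))}$ — so that the decay is uniform across the admissible parameter range.
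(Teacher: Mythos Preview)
The paper does not actually prove this lemma; it just cites Lemma~2 of \cite{AV14}, and your plan (reduce to $p_0'$, then case-split on $p_1/p_0'$ via Lemma~\ref{lem:0}) is precisely that AV14 argument. But there is a concrete gap. In Regimes A and B you reduce the first target to $\frac{n\log(N/n)}{N}$ and assert this is $o(1)$ ``using condition (\ref{ucond:1}).'' That does not follow: writing $x=n/N$, the function $x\log(1/x)$ is bounded by $1/e$ on $(0,1)$ but does not tend to~$0$ unless $x\to 0$, and nothing in (\ref{ucond:1}) forces $n=o(N)$. This is not merely a slip --- for $m\ge 4$ take $n=N/2$, $p_0'=1/2$, $(p_1-p_0')^2\asymp n^{-(m-1)}$; then (\ref{ucond:1}), (\ref{cond4:1}), (\ref{cond4:2}) all hold while $\frac{n^m}{N}\frac{(p_1-p_0)^2}{p_0(1-p_0)}$ stays bounded away from~$0$. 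So the first bound in (\ref{cond4:4}) as written seems to need an implicit $n=o(N)$; note that the proof of Theorem~\ref{thm:4} actually uses only the weaker prefactor $\big(\tfrac{n^2}{N}\big)^{m/2}$, for which no such obstruction arises and which coincides with $\frac{n^m}{N}$ when $m=2$.

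Once you add $n=o(N)$ (or target the weaker quantity), Regimes A and B are immediate, and in Regime~C you do not really need to interlock (\ref{cond4:1}) with (\ref{cond4:2}): from $t:=p_1/p_0'\to\infty$, (\ref{cond4:2}) gives $t\log t\lesssim \frac{\log(N/n)}{n^{m-1}p_0'}$, and the second part of (\ref{ucond:1}) bounds $\frac{1}{n^{m-1}p_0'}\le (N/n)^{o(1)}$, whence $t\le (N/n)^{o(1)}$. The first target is then at most $\frac{n\log(N/n)}{N}\cdot\frac{t}{\log t}=(n/N)^{1-o(1)}\to 0$, and the other two targets are dominated by the first since their prefactors differ by powers of $n/N<1$.
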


\begin{proof}[Proof of Lemma \ref{Lemma2}] The proof is almost the same as
 Lemma 2 in \cite{AV14}, so is omitted.
\end{proof}

\begin{Lemma}\label{Lemma1} Under the condition of Theorem \ref{thm:4}, it holds that
\begin{equation}\label{cond4:3}
\frac{p_1-p_0}{\sqrt{p_0}}\left(\frac{n^2}{N}\right)^{\frac{m+1}{4}}=o(1),\,\,\,\,
\limsup\limits_{N,n\to\infty} \frac{(n-1)(n-2)\cdots(n-m+1)H_{p_0}(p_1)}{m!\log\frac{N}{n}}<1.
\end{equation}
\end{Lemma}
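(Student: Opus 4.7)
The plan is to exploit the algebraic identity
\[
p_0 = \frac{n^{(m)}}{N^{(m)}}\, p_1 + \Bigl(1-\frac{n^{(m)}}{N^{(m)}}\Bigr)\, p_0^\prime,
\]
which is immediate from the definition (\ref{p0:prime}) of $p_0^\prime$. This realizes $p_0$ as a convex combination of $p_0^\prime$ and $p_1$, so in particular $p_0^\prime \le p_0 \le p_1$. Both parts of the lemma should then follow from simple monotonicity arguments, with no asymptotic analysis required.

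For the first claim, I would subtract to get $p_1 - p_0 = \bigl(1 - n^{(m)}/N^{(m)}\bigr)(p_1 - p_0^\prime) \le p_1 - p_0^\prime$. Combined with $\sqrt{p_0}\ge\sqrt{p_0^\prime}$ (from $p_0\ge p_0^\prime$), this yields the pointwise inequality
\[
\frac{p_1-p_0}{\sqrt{p_0}} \;\le\; \frac{p_1-p_0^\prime}{\sqrt{p_0^\prime}}.
\]
Multiplying by $(n^2/N)^{(m+1)/4}$ and invoking (\ref{cond4:1}) delivers the first conclusion of (\ref{cond4:3}).

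For the second claim, I would use the monotonicity of $H_p(q)$ in $p$ for fixed $q > p$. A direct differentiation gives $\partial H_p(q)/\partial p = (p-q)/(p(1-p)) < 0$ whenever $p < q$, so $p\mapsto H_p(p_1)$ is strictly decreasing on $(0,p_1)$. Since $p_0^\prime \le p_0 < p_1$, this gives $H_{p_0}(p_1) \le H_{p_0^\prime}(p_1)$, and therefore
\[
\frac{(n-1)(n-2)\cdots(n-m+1)H_{p_0}(p_1)}{m!\log\frac{N}{n}} \;\le\; \frac{(n-1)(n-2)\cdots(n-m+1)H_{p_0^\prime}(p_1)}{m!\log\frac{N}{n}},
\]
so the $\limsup$ of the left-hand side is bounded above by that of the right-hand side, which is strictly less than one by (\ref{cond4:2}).

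Because each step reduces to a one-line monotonicity argument, there is no real obstacle: the lemma is essentially the statement that replacing $p_0^\prime$ by the convex combination $p_0$ (which moves the baseline closer to $p_1$) can only \emph{shrink} both the signal-to-noise ratio and the KL divergence, so the lower-bound conditions of Theorem \ref{thm:4} transfer verbatim to the corresponding expressions involving the true null rate $p_0$.
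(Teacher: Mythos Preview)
Your argument is correct and, in fact, cleaner than the paper's. The paper routes both claims through the auxiliary Lemma~\ref{Lemma2}, which first establishes several asymptotic bounds of the form $\frac{(p_1-p_0)^2}{p_0(1-p_0)}\cdot\frac{n^a}{N^b}=o(1)$ and then deduces~(\ref{cond4:3}) from those; for the second claim it additionally bounds the difference $H_{p_0'}(p_1)-H_{p_0}(p_1)$ by $\frac{n^{(m)}}{N^{(m)}}\frac{(p_1-p_0)^2}{p_0'(1-p_0')}=o(1)$. Your approach bypasses all of this asymptotic bookkeeping: the convex-combination identity $p_0=\lambda p_1+(1-\lambda)p_0'$ with $\lambda=n^{(m)}/N^{(m)}$ immediately gives $p_0'\le p_0\le p_1$, and then both conclusions are pure monotonicity in the base parameter---$(p_1-p)/\sqrt{p}$ is decreasing on $(0,p_1)$, and $p\mapsto H_p(p_1)$ is decreasing on $(0,p_1)$ by the derivative computation you wrote. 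In particular, your argument shows the inequalities hold pointwise (not just asymptotically), which is strictly stronger than what the paper proves, and it does not require Lemma~\ref{Lemma2} at all. The paper's route has the side benefit that Lemma~\ref{Lemma2} is reused later in the proof of Theorem~\ref{thm:4}, but for Lemma~\ref{Lemma1} itself your monotonicity argument is the right level of generality.
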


\begin{proof}[Proof of Lemma \ref{Lemma1}] 
The first conclusion in (\ref{cond4:3}) directly follows from (\ref{cond4:4}).
By Lemma \ref{Lemma2}, it holds that
\[H_{p_0^{\prime}}(p_1)-H_{p_0}(p_1)\leq \frac{n^{(m)}}{N^{(m)}}\frac{(p_1-p_0)^2}{p_0^{\prime}(1-p_0^{\prime})}=o(1).\]
Then the second conclusion in (\ref{cond4:3}) follows. Proof is completed. 
\end{proof}

\begin{proof}[Proof of Theorem \ref{thm:4}] Conditional on a given subset $S\subset[N]$, the likelihood ratio statistic is expressed as
\begin{eqnarray}\nonumber
L_S&=&\frac{\prod_{\substack{i_1<\cdots<i_m\\ \{i_1,\ldots, i_m\}\subset S}}p_1^{A_{i_1\ldots i_m}}(1-p_1)^{1-A_{i_1\ldots i_m}}\prod_{\substack{i_1<\cdots<i_m\\ \{i_1,\ldots, i_m\}\not\subset S}}(p_0^{\prime})^{ A_{i_1\ldots i_m}}(1-p_0^{\prime})^{1-A_{i_1\ldots i_m}}}{\prod_{\substack{i_1<\cdots<i_m\\ \{i_1,\ldots, i_m\}\subset S}}p_0^{A_{i_1\ldots i_m}}(1-p_0)^{1-A_{i_1\ldots i_m}}\prod_{\substack{i_1<\cdots<i_m\\ \{i_1,\ldots, i_m\}\not\subset S}}p_0^{A_{i_1\ldots i_m}}(1-p_0)^{1-A_{i_1\ldots i_m}}}\\ \nonumber
&=&\left(\frac{p_1(1-p_0)}{p_0(1-p_1)}\right)^{A_S}\left(\frac{1-p_1}{1-p_0}\right)^{n^{(m)}}  \left(\frac{p_0^{\prime}(1-p_0)}{p_0(1-p_0^{\prime})}\right)^{A-A_S}\left(\frac{1-p_0^{\prime}}{1-p_0}\right)^{N^{(m)}-n^{(m)}}  \\
&=&e^{\theta_{p_1}A_S-n^{(m)}\Lambda(\theta_{p_1})+\theta_{p_0^{\prime}}(A-A_S)-\theta_{p_0^{\prime}}(N^{(m)}-n^{(m)}) }.
\end{eqnarray}
Consider the truncated uncondiitonal likelihood statistic
\[\tilde{L}=\binom{N}{n}^{-1}\sum_{|S|=n}L_SI_{\Gamma_S},\]
where $\Gamma_S$ is defined in the proof of Theorem \ref{thm:1}. It's easy to see $\mathbb{E}_0\tilde{L}=1+o(1)$ as in the proof of Theorem \ref{thm:1}. Hence we only need to prove $\mathbb{E}_0\tilde{L}^2\leq 1+o(1)$. 
For two subsets $S_1$ and $S_2$, let $K=|S_1\cap S_2|$. Note that $A_{S_1}+A_{S_2}-2A_{S_1\cap S_2}$, $A_{S_1\cap S_2}$, $A-A_{S_1}-A_{S_2}+A_{S_1\cap S_2}$ are independent. Then we have
\[\tilde{L}^2=\binom{N}{n}^{-2}\sum_{|S_1|=n,|S_2|=n}L_{S_1}I_{\Gamma_{S_1}}L_{S_1}I_{\Gamma_{S_1}},\]
and
\begin{eqnarray}\nonumber
\mathbb{E}_0\tilde{L}^2&=&\mathbb{E}_0L_{S_1}L_{S_1}I_{\Gamma_{S_1}\cap\Gamma_{S_2}}=\mathbb{E}_0e^{\theta_{p_1}(A_{S_1}+A_{S_2})-2n^{(m)}\Lambda(\theta_{p_1})+\theta_{p_0^{\prime}}(2A-A_{S_1}-A_{S_2})-2\theta_{p_0^{\prime}}(N^{(m)}-n^{(m)})}I_{\Gamma_{S_1}\cap\Gamma_{S_2}}\\
&\leq &I\times II\times III,
\end{eqnarray}
where
\begin{eqnarray}\nonumber
I&=&\mathbb{E}_0e^{2 \theta_{p_0^{\prime}}(A-A_{S_1}-A_{S_2}+A_{S_1\cap S_2})-2\Lambda_{p_0^{\prime}}(N^{(m)}-2n^{(m)}+K^{(m)})},\\   \nonumber
II&=&\mathbb{E}_0e^{(\theta_{p_1}+ \theta_{p_0^{\prime}})(A_{S_1}+A_{S_2}-2A_{S_1\cap S_2})-2(\Lambda_{p_1}+\Lambda_{p_0^{\prime}})(n^{(m)}-K^{(m)})},\\   \nonumber
III&=&\mathbb{E}_0e^{2\theta_{p_1}A_{S_1\cap S_2}-2\Lambda_{p_1}K^{(m)}}I[A_{S_1\cap S_2}\leq w_K].
\end{eqnarray}
For $III$, we have
$III\leq e^{\Delta K^{(m)}}$ if $K\leq k_{min}$;
and $III\leq e^{\Delta_K K^{(m)}}$ if $K> k_{min}$.
For $I$ and $II$, we get the following upper bounds:
\[I\leq e^{(N^{(m)}-2n^{(m)}+K^{(m)})\frac{(p_1-p_0^{\prime})^2}{p_0(1-p_0)}\frac{n^{(m)2}}{N^{(m)2}}},\,\,\,\,II\leq e^{-2(n^{(m)}-K^{(m)})\frac{(p_1-p_0)(p_1-p_0^{\prime})}{p_0(1-p_0)}\frac{n^{(m)}}{N^{(m)}}}.\]
Simple algebra yields $p_1-p_0=(p_1-p_0^{\prime})(1-\frac{n^{(m)}}{N^{(m)}})$. Then one gets
\begin{eqnarray}\nonumber
I\times II&\leq& e^{\frac{(p_1-p_0)^2}{p_0(1-p_0)}\left[-\frac{n^{(m)2}}{N^{(m)}}\left(1-\frac{n^{(m)}}{N^{(m)}}\right)^{-2}+2\frac{K^{(m)}n^{(m)}}{N^{(m)}}\left(1-\frac{n^{(m)}}{N^{(m)}}\right)^{-2}- \frac{K^{(m)}n^{(m)2}}{N^{(m)2}}\left(1-\frac{n^{(m)}}{N^{(m)}}\right)^{-2}\right]}\\
&=&e^{\frac{(p_1-p_0)^2}{p_0(1-p_0)}\left[-\frac{n^{(m)2}}{N^{(m)}}+\frac{n^{(m)}}{N^{(m)}}\left(K^{(m)}-\frac{n^{(m)2}}{N^{(m)}}\right)\left(1-\frac{n^{(m)}}{N^{(m)}}\right)^{-2}\left(2-\frac{n^{(m)}}{N^{(m)}}\right)\right]}=V_K.
\end{eqnarray}
By Lemma \ref{Lemma2}, it's easy to get
\[V_K\leq e^{\frac{(p_1-p_0)^2}{p_0(1-p_0)}\frac{n^{(m)}}{N^{(m)}}K^{(m)}}\leq e^{\frac{(p_1-p_0)^2}{p_0(1-p_0)}\frac{n^{2m-1}}{N^m}K}=e^{o(K)}.\]

Take a sequence $b\to\infty$ such that 
\[\frac{(p_1-p_0)^2}{p_0}\left(\frac{n^2}{N}\right)^{\frac{m}{2}}b^m=o(1),\,\,\,\, \frac{(p_1-p_0)^2}{p_0}\frac{n^{2m-2}}{N^{m-1}}b^{m-1}=o(1).\]
Set $k_0=b\frac{n^2}{N}\leq \frac{n}{2}$ and $k_0^{\prime}=\frac{n^2}{N}+\frac{n}{\sqrt{N}}b$.  We only need to show that
\begin{eqnarray}\label{eq4}
\mathbb{E}[I[K\leq k_0^{\prime}]e^{\Delta K^{(m)}}V_K]&\leq& 1+o(1),\\  \label{eq1}
\mathbb{E}[I[k_0^{\prime} < K\leq k_0]e^{\Delta K^{(m)}}V_K]&=&o(1),\\  \label{eq2}
\mathbb{E}[I[k_0< K\leq k_{min}]e^{\Delta K^{(m)}}V_K]&=&o(1),\\  \label{eq3}
\mathbb{E}[I[k_{min}< K\leq n ]e^{\Delta_K K^{(m)}}V_K]&=&o(1).   
\end{eqnarray}
Since $V_K=e^{o(K)}$, (\ref{eq2}) and (\ref{eq3}) are true by the proof of Theorem \ref{thm:1}. We next prove (\ref{eq4}) and (\ref{eq1}). Note that $K\leq k_0\leq \frac{n}{2}$. Therefore,
\[e^{\Delta K^{(m)}}V_K\leq e^{\Delta(1+o(1)) \left(K^{(m)}-\frac{n^{(m)2}}{N^{(m)}}\right)} \leq e^{(1+o(1))\frac{\Delta}{m!} \left(K^{m}-\frac{n^{m2}}{N^m}\right)+O\left(\frac{\Delta n^{2m-1}}{N^m}\right)}=(1+o(1))e^{(1+o(1))\frac{\Delta}{m!} \left(K^{m}-\frac{n^{2m}}{N^m}\right)}.  \]
For $k_0^{\prime}\leq K\leq k_0$, by simple algebra and the definition of $k_0$, we obtain
\[
K^{m}-\frac{n^{2m}}{N^m}=\left(K-\frac{n^{2}}{N}\right)\left(K^{m-1}+\cdots+\frac{n^{2(m-1)}}{N^{m-1}}\right)\leq \left(K-\frac{n^{2}}{N}\right)mb^{m-1}\left(\frac{n^2}{N}\right)^{m-1}.
\]
Hence,
\begin{eqnarray}\nonumber
\mathbb{E}e^{\Delta K^{(m)}}V_K&\leq& \mathbb{E}e^{\frac{\Delta}{m!}mb^{m-1}\left(\frac{n^2}{N}\right)^{m-1}(K-\frac{n^{2}}{N})}   \leq \left[1-\frac{n}{N}+\frac{n}{N}e^{\frac{\Delta}{m!}mb^{m-1}\left(\frac{n^2}{N}\right)^{m-1}}\right]^ne^{-\frac{\Delta}{m!}mb^{m-1}\left(\frac{n^2}{N}\right)^{m}}\\ \nonumber
&=&e^{\frac{\Delta}{m!}mb^{m-1}\left(\frac{n^2}{N}\right)^{m}+O\left(\Delta^2\left(\frac{n^2}{N}\right)^{2(m-1)}b^{2(m-1)}\right) }e^{-\frac{\Delta}{m!}mb^{m-1}\left(\frac{n^2}{N}\right)^{m}}=1+o(1),
\end{eqnarray}
where we have used the fact that $\Delta b^{m-1}\left(\frac{n^2}{N}\right)^{m-1}=o(1)$. By Chebyshev's inequality, we have $\mathbb{P}(K>k_0^{\prime})\leq b^{-2}=o(1)$. Hence, (\ref{eq1}) holds.

When $K\leq k_0^{\prime}$, by Lemma \ref{Lemma1}, (\ref{eq1}) follows by
\begin{eqnarray}\nonumber
\mathbb{E}[I[K&\leq& k_0^{\prime}]e^{\Delta K^{(m)}}V_K]\leq (1+o(1))e^{(1+o(1))\frac{\Delta}{m!} \left(K^{m}-\frac{n^{2m}}{N^m}\right)}\\  \nonumber
&=&(1+o(1))e^{(1+o(1))\frac{\Delta}{m!} \left(k_0^{\prime}-\frac{n^{2}}{N}\right)\left(k_0^{\prime (m-1)}+\cdots+\frac{n^{2(m-1)}}{N^{m-1}}\right)}\\  \nonumber
&=&(1+o(1))e^{\frac{\Delta}{m!} \frac{bn}{\sqrt{N}}\left(k_0^{\prime m-1}+\cdots+\frac{n^{2(m-1)}}{N^{m-1}}\right)}=1+o(1).
\end{eqnarray}
This completes the proof.
\end{proof}

\begin{proof}[Proof of Theorem \ref{thm:6} \ref{thm:6:I}]
For type I error, let $q=p_0-\sqrt{\frac{p_0\log\frac{N}{n}}{N^{(m)}}}$. 
By Markov inequality, it's easy to verify that $\mathbb{P}(\hat{p}_0\leq q)=o(1)$. Let $\hat{a}_q=H^{-1}_{\hat{p}_0}(n\frac{\log\frac{N}{n}+2}{n^{(m)}})$ and $a_q=H^{-1}_{q}(n\frac{\log\frac{N}{n}+2}{n^{(m)}})$. Note that
\begin{eqnarray}\label{unkonwntypeI1}
\mathbb{P}\left(W^*_n\geq n^{(m)}\hat{a}_q\right)&=&\mathbb{P}\left(W^*_n\geq n^{(m)}\hat{a}_q\cap \hat{p}_0>q \right)+\mathbb{P}\left(W^*_n\geq n^{(m)}\hat{a}_q\cap \hat{p}_0\leq q \right)\\  \nonumber
&\leq&\mathbb{P}\left(W^*_n\geq n^{(m)}a_q\right)+o(1)\\  \nonumber
&\leq& \sum_{|S|=n}\mathbb{P}\left(W_S\geq n^{(m)}a_q\right)+o(1)\\   \nonumber
&\leq&\binom{N}{n}e^{-n^{(m)}H_{p_0}(a_q)}+o(1)\\   \nonumber
&\leq& e^{-n\left(1+\frac{n^{(m)}}{n}(H_q(a_q)-H_{p_0}(a_q))\right)}+o(1),
\end{eqnarray}
where the last step follows from the fact that $n\log\frac{Ne}{n}=-n+n^{(m)}H_q(a_q)$ and $\log\binom{N}{n}\leq n\log\frac{Ne}{n}$. We will show $n^{m-1}(H_q(a_q)-H_{p_0}(a_q))=o(1)$ 
in two cases which together conclude that $\mathbb{P}\left(W^*_n\geq n^{(m)}\hat{a}_q\right)=o(1)$ under $H_0$.

\textit{Case (1).} Suppose $n^{m-1}p_0\gg \log\frac{N}{n}$. Then $N^{m-1}p_0\gg \log\frac{N}{n}$. By straightforward calculations, we have
\begin{eqnarray}\label{unknowntypeI2}
H_q(a_q)-H_{p_0}(a_q)&=&a_q\log\frac{p_0}{q}+(1-a_q)\log\frac{1-p_0}{1-q}\\  \nonumber
&=&a_q\log\left(1+\frac{\sqrt{\frac{p_0\log\frac{N}{n}}{N^{(m)}}}}{q}\right)+(1-a_q)\log\left(1-\frac{\sqrt{\frac{p_0\log\frac{N}{n}}{N^{(m)}}}}{1-q}\right)\\  \nonumber
&=&\frac{a_q}{q}\sqrt{\frac{p_0\log\frac{N}{n}}{N^{(m)}}}-(1-a_q)\frac{\sqrt{\frac{p_0\log\frac{N}{n}}{N^{(m)}}}}{1-q}+O\left(\frac{a_q}{q}\frac{\log\frac{N}{n}}{N^m}\right)\\   \nonumber
&=&\frac{1}{1-q}\left(\frac{a_q}{q}-1\right)\sqrt{\frac{p_0\log\frac{N}{n}}{N^{(m)}}}+O\left(\frac{a_q}{q}\frac{\log\frac{N}{n}}{N^m}\right).
\end{eqnarray}
We need to show that $1\leq\frac{a_q}{q}\leq 1+O\left(\sqrt{\frac{\log\frac{N}{n}}{n^{m-1}p_0}}\right)$. It's clear that $q\leq a_q$ since $H_q(q)=0<H_q(a_q)$. Let $\delta=1+\sqrt{n\frac{\log\frac{N}{n}}{N^{(m)}p_0}}$. Then
\begin{eqnarray}\nonumber
H_q(\delta p_0)&=&\delta p_0\log \delta+\delta p_0\log\frac{p_0}{q}+(1-\delta p_0)\log\left(1+\frac{q-\delta p_0}{1-q}\right)\\  \nonumber
&\sim&\delta p_0\left[\sqrt{n\frac{\log\frac{N}{n}}{n^{(m)}p_0}}+\sqrt{\frac{\log\frac{N}{n}}{N^{(m)}p_0}}-\frac{1-\delta p_0}{\delta(1-q)}\left(\sqrt{\frac{n\log\frac{N}{n}}{n^{(m)}p_0}}+\sqrt{\frac{\log\frac{N}{n}}{N^{(m)}p_0}}\right)\right]\\  \nonumber
&\sim&\delta p_0\left(1-\frac{1-\delta p_0}{\delta(1-q)}\right)\sqrt{\frac{n\log\frac{N}{n}}{n^{(m)}p_0}}\\  \nonumber
&=&\delta p_0\frac{\sqrt{\frac{n\log\frac{N}{n}}{n^{(m)}p_0}}+\sqrt{\frac{\log\frac{N}{n}p_0}{N^{(m)}}}}{\delta (1-q)}\sqrt{\frac{n\log\frac{N}{n}}{n^{(m)}p_0}}\\  \nonumber
&=&\frac{1}{1-q}\frac{n\log\frac{N}{n}}{n^{(m)}}(1+o(1))>H_q(a_q).
\end{eqnarray}
The second equation  in the above follows from the fact that $\frac{q}{\delta p_0}-1=-\frac{1}{\delta}\left(\sqrt{\frac{n\log\frac{N}{n}}{n^{(m)}p_0}}+\sqrt{\frac{\log\frac{N}{n}}{N^{(m)}p_0}}\right)$. Hence, it holds that  $a_q\leq \delta p_0\sim \delta q$ and $1\leq\frac{a_q}{q}\leq 1+O\left(\sqrt{\frac{\log\frac{N}{n}}{n^{m-1}p_0}}\right)$.

\textit{Case (2).} Suppose $n^{m-1}p_0=O(\log\frac{N}{n})$. Note that $q=p_0-\sqrt{\frac{p_0\log\frac{N}{n}}{N^{(m)}}}$. By straightforward calculations, we have
\begin{eqnarray}\label{unknowntypeI3}
H_q(a_q)-H_{p_0}(a_q)&=&a_q\log\frac{a_q(1-q)}{q(1-a_q)}-a_q\log\frac{a_q(1-p_0)}{p_0(1-a_q)}+o(1)\\ \nonumber
&=&a_q\log\frac{p_0(1-q)}{q(1-p_0)}+o(1)\\  \nonumber
&\leq&a_q\frac{p_0-q}{q(1-p_0)}+o(1)\leq a_q\sqrt{\frac{\log\frac{N}{n}}{N^mp_0}}+o(1).
\end{eqnarray}
We need to show that $r_n=n^{m-1} a_q\sqrt{\frac{\log\frac{N}{n}}{N^mp_0}}=o(1)$. If $n^{m-1}p_0\asymp\log\frac{N}{n}$, then $a_q\asymp p_0$ and $r_n=o(1)$. 
Suppose $n^{m-1}p_0=o(\log\frac{N}{n})$. If $N^{m-1}p_0\asymp 1$, then $H_q(\frac{\log\frac{N}{n}}{n^{m-1}})\gg H_q(a_q)$, which yields $a_q<\frac{\log\frac{N}{n}}{n^{m-1}}$. Hence $r_n=(1)$. If $N^{m-1}p_0=o(1)$, then $p_0=o(\frac{1}{n^{m-1}})$. Consequently, $H_q(\frac{1}{n^{m-1}})\gg H_q(a_q)$, which implies $a_q\leq \frac{1}{n^{m-1}}$ and hence $r_n=o(1)$.

For type II error, we need to prove
\begin{equation}\label{cond6:1}
\lim\sup \frac{(n-1)(n-2)\cdots(n-m+1)H_{\hat{p}_0}(p_1)}{m!\log\frac{N}{n}}>1.
\end{equation}
Direct calculations yield that
\[\mathbb{E}m!N^{(m)}\hat{p}_0=m!(N^{(m)}p_0^{\prime}+n^{(m)}(p_1-p_0^{\prime})) =m!N^{(m)}p_0, \]
\[\mathbb{V}m!N^{(m)}\hat{p}_0 \leq m!N^{(m)}p_0. \]
As a result, we have $\hat{p}_0=p_0+O_p\left(\sqrt{\frac{p_0}{N^m}}\right)$.
By the condition $N^mp_0>2n$, it follows $\hat{p}_0=p_0(1+o_p(1))$. Let $a=\frac{n^{(m)}}{N^{(m)}}(p_1-p_0^{\prime})$.

If $\frac{p_1}{p_0^{\prime}}\rightarrow1$, then $a=o(p_0^{\prime})$ and
$\frac{(p_1-p_0^{\prime})^2}{p_0^{\prime}}\succ H_{p_0^{\prime}}(p_1)\succ \frac{m!\log\frac{N}{n}}{n^{m-1}}$,
which implies $\sqrt{\frac{p_0^{\prime}}{N^{(m)}}}=o(p_1-p_0^{\prime})$.
As a result, $\hat{p}_0-p_0^{\prime}=o(p_1-p_0^{\prime})$, and then $H_{\hat{p}_0}(p_1)\geq H_{p_0}(p_1)$ asymptotically.

If $\frac{p_1}{p_0^{\prime}}\rightarrow r\in(1,\infty)$, then $a=o(p_0^{\prime})$, it's easy to obtain $H_{\hat{p}_0}(p_1)\sim H_{p_0}(p_1)$.

If $\frac{p_1}{p_0^{\prime}}\rightarrow\infty$, we have 
$H_{\hat{p}_0}(p_1)\geq H_{p_0^{\prime}}(p_1)\wedge mp_1\log\frac{N}{n}$.
\end{proof}

\begin{proof}[Proof of Theorem \ref{thm:6} \ref{thm:6:II}]  
Simple algebra yields that
\[\sum_{i_1=1}^N\left[W_{i_1*}-(m-1)!(N-1)^{(m-1)}p_0\right]=m!N^{(m)}(\hat{p}_0-p_0).\]
Meanwhile, it's easy to verify that
\begin{eqnarray}\nonumber
&&\sum_{i_1=1}^N\left[W_{i_1*}-(m-1)!(N-1)^{(m-1)}\hat{p}_0\right]^2\\
&=&\sum_{i_1=1}^N\left[W_{i_1*}-(m-1)!(N-1)^{(m-1)}p_0\right]^2-(m-1)!^2N(N-1)^{(m-1)2}(\hat{p}_0-p_0)^2,
\end{eqnarray}
and
\[\hat{p}_0(1-\hat{p}_0)=\hat{p}_0(1-p_0)-(\hat{p}_0-p_0)^2+p_0(\hat{p}_0-p_0).\]

Next, we show $\mathcal{T}_1=O_p(1)$ under $H_0$. Note that
\begin{equation}\label{T1eq1}
\mathbb{E}(\hat{p}_0-p_0)^2=\frac{p_0(1-p_0)}{N^{(m)}},
\end{equation}
\begin{equation}\label{T1eq2}
\mathbb{V}[(\hat{p}_0-p_0)^2]\leq \frac{p_0^2(1-p_0)^2}{N^{(m)2}}+\frac{p_0(1-p_0)}{N^{(m)3}},
\end{equation}
\[\mathbb{E}\left[W_{i_1*}-(m-1)!(N-1)^{(m-1)}p_0\right]^2=(m-1)!(N-1)^{(m-1)}p_0(1-p_0),\]
\[\mathbb{V}\left(\sum_{i_1=1}^N\left[W_{i_1*}-(m-1)!(N-1)^{(m-1)}p_0\right]^2\right)\leq  N^{2m-1}p_0^2+N^{m-2}p_0+N^mp_0.\]
Since
$\mathbb{E}V_2=\mathbb{E}V_1=(m-1)!(N-1)^{(m-1)} p_0(1-p_0)$,
we get $\mathbb{E}V=0$.
Besides, since $N^{m-1}p_0>1$, then
\[\mathbb{V}(V)\leq \frac{N^{2m-1}p_0^2+N^{m-2}p_0+N^mp_0}{N^2}+N^{m-2}(p_0^2+p_0)\leq N^{2m-3}p_0^2.\]
By Chebyshev's inequality, we get $V=O_p(N^{\frac{2m-3}{2}}p_0)$. Note that
\[\mathbb{P}(\hat{p}_0<0.5p_0)\leq \mathbb{P}(|\hat{p}_0-p_0|\geq 0.5p_0)\leq 4\frac{1-p_0}{N^{(m)}p_0}=o(1). \]
Hence, $V=O_p(N^{\frac{2m-3}{2}}\hat{p}_0)$ and $\mathcal{T}_1=O_p(1)$ under $H_0$.

In the following, let us show that $\mathbb{E}(V)\gg N^{\frac{2m-3}{2}}p_0 $ and $\mathbb{E}(V)\gg \sqrt{\mathbb{V}(V)} $ under $H_1$. Note that for a given subset of vertices $S$, 
\[\hat{p}_0-p_0=\frac{1}{N^{(m)}}\left[\sum_{\substack{ i_1<i_2<\cdots<i_m,\\ \{i_1,\ldots,i_m\}\subset S}}(A_{i_1\ldots i_m}-p_1)+\sum_{\substack{ i_1<i_2<\cdots<i_m,\\ \{i_1,\ldots,i_m\}\not\subset S}}(A_{i_1\ldots i_m}-p_0^{\prime})\right].\]
Then 
\begin{eqnarray}\label{unk1}
\mathbb{E}(\hat{p}_0-p_0)^2&=&\frac{1}{N^{(m)2}}\left(n^{(m)}p_1(1-p_1)+(N^{(m)}-n^{(m)})p_0^{\prime}(1-p_0^{\prime})\right)\\  \nonumber
&=&\frac{1}{N^{(m)2}}\left\{N^{(m)}p_0-N^{(m)}p_0^2+N^{(m)}p_0^2-n^{(m)}p_1^2-(N^{(m)}-n^{(m)})p_0^{\prime 2}\right\} \\  \nonumber
&=& \frac{p_0(1-p_0)}{N^{(m)}}-\frac{n^{(m)}(p_1-p_0^{\prime})^2}{N^{(m)2}}\left(1+o(1)\right),
\end{eqnarray}
and
\begin{eqnarray}\nonumber
&&\mathbb{E}\left(\sum_{i_1=1}^N\left[W_{i_1*}-(m-1)!(N-1)^{(m-1)}p_0\right]^2\right)\\
&=&\sum_{\substack{i_1\neq i_2\neq\cdots\neq i_m\\ i_1\neq j_2\neq\cdots\neq j_m}}\mathbb{E}\left[A_{i_1i_2\ldots i_m}A_{i_1j_2\ldots j_m}\right] - m!(m-1)!N^{(m)}(N-1)^{(m-1)} p_0^2.
\end{eqnarray}
Before calculating the above expectation,
with a little abuse of notation, we define 
\[
S_{i_1}=\{(i_2,\ldots,i_m)| 
\text{$i_1,\ldots,i_m$ are pairwise different and} \{i_2,\ldots,i_m\}\subset S\},
\]
\[
\bar{S}_{i_1}=\{(i_2,\ldots,i_m)| \text{$i_1,\ldots,i_m$ are pairwise different and} \{i_2,\ldots,i_m\}\not\subset S\},
\]
\[
U_{i_1}=\{(i_2,\ldots,i_m)| \text{$i_1,\ldots,i_m$ are pairwise different}\}.
\]
Then
\begin{eqnarray*}\nonumber
&&\sum_{\substack{i_1\neq i_2\neq\cdots\neq i_m\\ i_1\neq j_2\neq\cdots\neq j_m}}\mathbb{E}\left[A_{i_1i_2\ldots i_m}A_{i_1j_2\ldots j_m}\right]\\   \nonumber
 &=&\sum_{i_1\in S,S_{i_1},S_{i_1}}\mathbb{E}\left[A_{i_1i_2\ldots i_m}A_{i_1j_2\ldots j_m}\right] +\sum_{i_1\in S,\bar{S}_{i_1},S_{i_1}}\mathbb{E}\left[A_{i_1i_2\ldots i_m}A_{i_1j_2\ldots j_m}\right] +\sum_{i_1\in S,S_{i_1},\bar{S}_{i_1}}\mathbb{E}\left[A_{i_1i_2\ldots i_m}A_{i_1j_2\ldots j_m}\right] \\  \nonumber
&&+\sum_{i_1\in S,\bar{S}_{i_1},\bar{S}_{i_1}}\mathbb{E}\left[A_{i_1i_2\ldots i_m}A_{i_1j_2\ldots j_m}\right] +\sum_{i_1\notin S,U_{i_1},U_{i_1}}\mathbb{E}\left[A_{i_1i_2\ldots i_m}A_{i_1j_2\ldots j_m}\right]\\ \nonumber
&=&\left\{(m-1)!^2n(n-1)^{(m-1)2}p_1^2-(m-1)!n(n-1)^{(m-1)}p_1^2\right.\\   \nonumber
&&+2(m-1)!^2n(n-1)^{(m-1)}\left[ (N-1)^{(m-1)}-(n-1)^{(m-1)}\right]p_1p_0^{\prime} \\   \nonumber
&&+(m-1)!^2n\left[ (N-1)^{(m-1)}-(n-1)^{(m-1)}\right]^2p_0^{\prime 2}-(m-1)!n\left[ (N-1)^{(m-1)}-(n-1)^{(m-1)}\right]p_0^{\prime 2}\\   \nonumber
&&\left.+ (m-1)!^2(N-n)(N-1)^{(m-1)2}p_0^{\prime 2}-(m-1)!(N-n)(N-1)^{(m-1)}p_0^{\prime 2}\right\}\left(1+o(1)\right),
\end{eqnarray*}
where $\sum_{i_1\in S, \bar{S}_{i_1},S_{i_1}}$ is a summation over all $i_1\in S$, $(i_2,\ldots, i_m)\in \bar{S}_{i_1}$ and $(j_2,\ldots, j_m)\in S_{i_1}$, the rest summations are similarly defined. As a result, we get
\begin{eqnarray}\nonumber
&&\mathbb{E}\left(\sum_{i_1=1}^N\left[W_{i_1*}-(m-1)!(N-1)^{(m-1)}p_0\right]^2\right)\\
&=&m!N^{(m)}p_0(1-p_0)+(m-1)!^2n(n-1)^{(m-1)2}(p_1-p_0^{\prime})^2\left(1+o(1)\right).
\end{eqnarray}
Consequently, we obtain
\[\mathbb{E}V\sim\frac{n^{2m-1}(p_1-p_0^{\prime})^2 }{N-m!}-\frac{n^m(p_1-p_0^{\prime})^2 }{N^2}+\frac{n^m(p_1-p_0^{\prime})^2 }{N^{m+1}}\sim \frac{n^{2m-1}(p_1-p_0^{\prime})^2 }{N}. \]

If $p_0^{\prime}\asymp (p_1-p_0^{\prime})\frac{n^{(m)}}{N^{(m)}}$, then $p_0\asymp p_0^{\prime}$. By condition (\ref{cond7:1}), we have
\[\frac{n^{2m-1}(p_1-p_0^{\prime})^2 }{NN^{\frac{2m-3}{2}}p_0^{\prime}}=\frac{(p_1-p_0^{\prime})^2}{p_0^{\prime}}\left(\frac{n^2}{N}\right)^{\frac{2m-1}{2}}\rightarrow\infty.\]
Hence, $\mathbb{E}V\gg N^{\frac{2m-3}{2}}p_0$. If $p_0^{\prime}\ll (p_1-p_0^{\prime})\frac{n^{(m)}}{N^{(m)}}$, then $p_0\asymp (p_1-p_0^{\prime})\frac{n^{(m)}}{N^{(m)}}$. In this case, $\mathbb{E}V\gg N^{\frac{2m-3}{2}}p_0$ since $N^{m-1}p_0>1$.

Next, we calculate the variance of $V$ under $H_1$. As a first step, it's easy to check that
$\mathbb{V}[(\hat{p}_0-p_0)^2] \leq \frac{p_0^2}{N^{(m)2}}+\frac{p_0}{N^{(m)3}} $.
First, we have the following decomposition
\begin{eqnarray}\nonumber
&&\sum_{i_1=1}^N\left[W_{i_1*}-(m-1)!(N-1)^{(m-1)}p_0\right]^2
=\sum_{\substack{i_1\neq i_2\neq\cdots\neq i_m\\ i_1\neq j_2\neq\cdots\neq j_m}}(A_{i_1i_2\ldots i_m}-p_0)(A_{i_1j_2\ldots j_m}-p_0)\\   \nonumber
 &=&\sum_{i_1\in S,S_{i_1},S_{i_1}}(A_{i_1i_2\ldots i_m}-p_0)(A_{i_1j_2\ldots j_m}-p_0)+\sum_{i_1\in S,\bar{S}_{i_1},S_{i_1}}(A_{i_1i_2\ldots i_m}-p_0)(A_{i_1j_2\ldots j_m}-p_0)\\   \nonumber
 &&+\sum_{i_1\in S,S_{i_1},\bar{S}_{i_1}}(A_{i_1i_2\ldots i_m}-p_0)(A_{i_1j_2\ldots j_m}-p_0) +\sum_{i_1\in S,\bar{S}_{i_1},\bar{S}_{i_1}}(A_{i_1i_2\ldots i_m}-p_0)(A_{i_1j_2\ldots j_m}-p_0)\\   \label{p0eq1}
 && +\sum_{i_1\notin S,U_{i_1},U_{i_1}}(A_{i_1i_2\ldots i_m}-p_0)(A_{i_1j_2\ldots j_m}-p_0).
\end{eqnarray}
We find the variance of each term in the right hand side of (\ref{p0eq1}). For the first term, one has
\begin{eqnarray}\nonumber
&&\sum_{i_1\in S,S_{i_1},S_{i_1}}(A_{i_1i_2\ldots i_m}-p_0)(A_{i_1j_2\ldots j_m}-p_0)\\ \nonumber
&=&\sum_{i_1\in S,S_{i_1},S_{i_1}}(A_{i_1i_2\ldots i_m}-p_1)(A_{i_1j_2\ldots j_m}-p_1)+\sum_{i_1\in S,S_{i_1},S_{i_1}}(A_{i_1i_2\ldots i_m}-p_1)(p_1-p_0)  \\ \nonumber
&&+\sum_{i_1\in S,S_{i_1},S_{i_1}}(p_1-p_0)(A_{i_1j_2\ldots j_m}-p_1)+\sum_{i_1\in S,S_{i_1},S_{i_1}}(p_1-p_0)^2.
\end{eqnarray}
Straightforward calculation yields
\begin{eqnarray*}\nonumber
\mathbb{V}[\sum_{i_1\in S,S_{i_1},S_{i_1}}(A_{i_1i_2\ldots i_m}-p_1)(p_1-p_0) ]&=&(p_1-p_0)^2p_1(1-p_1)m!n^{(m)}(m-1)!^2(n-1)^{(m-1)2},   \\ \nonumber
\mathbb{E}[\sum_{i_1\in S,S_{i_1},S_{i_1}}(A_{i_1i_2\ldots i_m}-p_1)(A_{i_1j_2\ldots j_m}-p_1)]&=&p_1(1-p_1)m!n^{(m)},   \\ \nonumber
\mathbb{E}[\sum_{i_1\in S,S_{i_1},S_{i_1}}(A_{i_1i_2\ldots i_m}-p_1)(A_{i_1j_2\ldots j_m}-p_1)]^2
&=&p_1^2(1-p_1)^2m!(m-1)!(n-1)^{(m-1)}n^{(m)}\\
&&+p_1^2(1-p_1)^2m!^2n^{(m)2}+m!n^{(m)}p_1(1-p_1)O(1).
\end{eqnarray*}
Combining with the fact that $p_1=p_0^{\prime}+(p_0-p_0^{\prime})\frac{N^{(m)}}{n^{(m)}}$ and $p_0^{\prime}\leq p_0\leq p_1$, we can get
\begin{eqnarray*}
&&\mathbb{V}\left[\sum_{i_1\in S,S_{i_1},S_{i_1}}(A_{i_1i_2\ldots i_m}-p_0)(A_{i_1j_2\ldots j_m}-p_0)\right]\\
&\leq& N^mp_0+N^{2m-1}p_0^2+n^{3m-2}(p_1-p_0^{\prime})^3+N^{m-1}n^{2m-1}p_0(p_1-p_0^{\prime})^2.
\end{eqnarray*}

For the third term in (\ref{p0eq1}), we decompose it as
\begin{eqnarray}\nonumber
&&\sum_{i_1\in S,S_{i_1},\bar{S}_{i_1}}(A_{i_1i_2\ldots i_m}-p_0)(A_{i_1j_2\ldots j_m}-p_0)\\ \nonumber
&=&\sum_{i_1\in S,S_{i_1},\bar{S}_{i_1}}(A_{i_1i_2\ldots i_m}-p_1)(A_{i_1j_2\ldots j_m}-p_0^{\prime})+\sum_{i_1\in S,S_{i_1},\bar{S}_{i_1}}(A_{i_1i_2\ldots i_m}-p_1)(p_0^{\prime}-p_0)  \\ \label{p0eq2}
&&+\sum_{i_1\in S,S_{i_1},\bar{S}_{i_1}}(p_1-p_0)(A_{i_1j_2\ldots j_m}-p_0^{\prime})+\sum_{i_1\in S,S_{i_1},\bar{S}_{i_1}}(p_1-p_0)(p_1-p_0^{\prime}).
\end{eqnarray}
By finding the variance of each term in (\ref{p0eq2}), we can get
\begin{eqnarray*}\nonumber
&&\mathbb{V}\left[\sum_{i_1\in S,S_{i_1},\bar{S}_{i_1}}(A_{i_1i_2\ldots i_m}-p_0)(A_{i_1j_2\ldots j_m}-p_0)\right]\\
&\leq& N^{2m+1}p_0^2+n^mN^{(m-1)2}p_1(p_0^{\prime}-p_0) ^2+p_0^{\prime}(p_1-p_0^{\prime})^2N^{m-1}n^{2m-1}\\  \nonumber
&\leq&N^{2m+1}p_0^2+ n^{3m-2}(p_1-p_0^{\prime})^3+N^{m-1}n^{2m-1}p_0(p_1-p_0^{\prime})^2.
\end{eqnarray*}
The variance of the second term in (\ref{p0eq1}) can be treated similar as the variance of the third term.
As to the variances of the fourth and fifth terms in (\ref{p0eq1}), one has
\begin{eqnarray}\nonumber
\mathbb{V}\left[\sum_{i_1\in S,\bar{S}_{i_1},\bar{S}_{i_1}}(A_{i_1i_2\ldots i_m}-p_0)(A_{i_1j_2\ldots j_m}-p_0)\right]&\leq&N^{2m-1}p_0^2+2nN^{3m-3}p_0^{\prime}(p_0-p_0^{\prime})^2\\  \nonumber
&\leq& N^{2m-1}p_0^2+2p_0(p_1-p_0^{\prime})^2N^{m-1}n^{2m-1},\\  \nonumber
\mathbb{V}\left[\sum_{i_1\notin S,U_{i_1},U_{i_1}}(A_{i_1i_2\ldots i_m}-p_0)(A_{i_1j_2\ldots j_m}-p_0)  \right]&\leq& (N^{2m-1}p_0^2+N^mp_0)+2p_0^{\prime}(p_0^{\prime}-p_0) ^2N^{3m-2}\\  \nonumber
&\leq& N^{2m-1}p_0^2+N^{m-1}n^{2m-1}p_0(p_1-p_0^{\prime})^2.
\end{eqnarray}
Based on the above, we get that
\begin{eqnarray*}
&&\mathbb{V}\left[\sum_{i_1=1}^N\left[W_{i_1*}-(m-1)!(N-1)^{(m-1)}p_0\right]^2\right]\\
&\leq& N^mp_0+N^{2m-1}p_0^2+n^{3m-2}(p_1-p_0^{\prime})^3+N^{m-1}n^{2m-1}p_0(p_1-p_0^{\prime})^2.
\end{eqnarray*}
In summary, we have
\[\mathbb{V}[V]\leq N^{2m-3}p_0^2+N^{m-3}n^{2m-1}p_0(p_1-p_0^{\prime})^2+\frac{n^{3m-2}}{N^2}(p_1-p_0^{\prime})^3.\]
Recall that $\mathbb{E}V\gg N^{\frac{2m-3}{2}}p_0$. Then $p_0N^{m-1}\ll n^{2m-1}(p_1-p_0^{\prime})^2$ and $\sqrt{N}\ll n^{2m-1}(p_1-p_0^{\prime})^2$. It follows that
\[\mathbb{E}V\gg \sqrt{N^{m-3}n^{2m-1}p_0(p_1-p_0^{\prime})^2},\,\,\,\, \mathbb{E}V\gg \sqrt{\frac{n^{3m-2}}{N^2}(p_1-p_0^{\prime})^3}.\]
\end{proof}
Before proving Theorem \ref{thm:6} \ref{thm:6:III},
we need the following lemma.
\begin{Lemma}\label{cyclelemma1}
Under the conditions of Theorem \ref{thm:6},
if (\ref{thm8eq}) holds, then 
\[
\frac{p_1-p_0^{\prime}}{\sqrt{p_0}}\left(\frac{n^2}{N}\right)^{\frac{m+1}{4}}\rightarrow\infty.
\]
\end{Lemma}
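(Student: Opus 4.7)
The plan is to reduce the claim to condition (\ref{thm8eq}) by proving $p_0 \sim p_0'$, which will give $\sqrt{p_0} \sim \sqrt{p_0'}$ and hence $\frac{p_1 - p_0'}{\sqrt{p_0}} \sim \frac{p_1 - p_0'}{\sqrt{p_0'}}$, so that the second part of (\ref{thm8eq}) yields the desired conclusion directly.

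First I would use the definition of $p_0'$ in (\ref{p0:prime}) to derive the exact identity $p_0 - p_0' = \frac{n^{(m)}}{N^{(m)}}(p_1 - p_0')$. Combined with $p_1 - p_0' \leq 1$ and the standard bound $n^{(m)}/N^{(m)} \leq (n/N)^m$ (which follows since each factor $(n-i)/(N-i) \leq n/N$ for $0 \leq i \leq m-1$), this yields $p_0 - p_0' \leq (n/N)^m$.

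Next I would invoke condition (\ref{ucond:1}), which is part of the hypotheses of Theorem \ref{thm:6}, namely $\log(1 \vee 1/(n^{m-1}p_0')) = o(\log(N/n))$. Splitting on the two cases $n^{m-1}p_0' \geq 1$ and $n^{m-1}p_0' < 1$, this implies that for every fixed $\epsilon > 0$ and all sufficiently large $N$, $1/p_0' \leq n^{m-1}(N/n)^\epsilon$. Combining with the previous step, $(p_0 - p_0')/p_0' \leq n^{2m-1}(N/n)^\epsilon / N^m$.

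Finally, using $n = o(\sqrt{N})$ from (\ref{thm8eq}), taking $\epsilon = 1/2$ gives $n^{2m-3/2}/N^{m-1/2} \leq N^{(m-3/4)-(m-1/2)} = N^{-1/4} \to 0$, so $p_0/p_0' \to 1$. The argument is elementary throughout; there is no serious obstacle beyond arithmetic bookkeeping. The only structural point worth emphasizing is that the hypotheses $n = o(\sqrt{N})$ and (\ref{ucond:1}) jointly prevent the planted block from contributing enough mass to shift $p_0$ substantially away from the background rate $p_0'$; if either were relaxed, the conclusion could genuinely fail, since $p_0$ could be much larger than $p_0'$ when the planted community inflates the overall edge rate appreciably.
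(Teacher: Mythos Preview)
Your proof is correct and in fact cleaner than the paper's. The paper argues by dichotomy: if $p_0' \asymp p_0$ the conclusion is immediate, while if $p_0' \ll (p_1-p_0')\frac{n^m}{N^m}$ (equivalently $p_0' = o(p_0)$) it carries out a separate direct estimate, using (\ref{ucond:1}) only to extract the crude bound $n^{m-1}p_0' > N^{-1/2}$ and then verifying $\frac{(p_1-p_0')^2}{p_0}\bigl(\tfrac{n^2}{N}\bigr)^{(m+1)/2}\to\infty$ by hand in that regime. You instead show unconditionally that $(p_0-p_0')/p_0' \le (n/N)^m \cdot n^{m-1}(N/n)^{\epsilon} \to 0$, so $p_0\sim p_0'$ always holds under the stated hypotheses; this renders the paper's second case vacuous and reduces the lemma immediately to (\ref{thm8eq}). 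Your route is shorter, yields the slightly stronger side conclusion $p_0\sim p_0'$, and makes transparent exactly where each hypothesis ($n=o(\sqrt{N})$ and the second half of (\ref{ucond:1})) is consumed.
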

\begin{proof}
[Proof of Lemma \ref{cyclelemma1}] If $p_0^{\prime}\asymp p_0$, the proof is complete. Suppose $p_0^{\prime}\ll (p_1-p_0^{\prime})\frac{n^m}{N^m}$. Then $p_1-p_0^{\prime}\gg \frac{N^m}{n^m}p_0^{\prime}$.
Under condition (\ref{ucond:1}), it's easy to verify that $n^{m-1}p_0^{\prime}> \frac{1}{\sqrt{N}}$. As a result, $p_0^{\prime}>\frac{1}{\sqrt{N}n^{m-1}}\gg \frac{1}{\sqrt{N}N^{\frac{m-1}{2}}}$ if $n=o(\sqrt{N})$.
In this case, since $N^mp_0^{\prime}\gg N^{\frac{m}{2}}$,
\[\frac{(p_1-p_0^{\prime})^2}{p_0}\left(\frac{n^2}{N}\right)^{\frac{m+1}{2}}\asymp \frac{(p_1-p_0^{\prime})^2}{(p_1-p_0^{\prime})\frac{n^m}{N^m}}\left(\frac{n^2}{N}\right)^{\frac{m+1}{2}}=(p_1-p_0^{\prime})nN^{\frac{m-1}{2}}\gg N^mp_0^{\prime}\frac{N^{\frac{m-1}{2}}}{n^{m-1}}\rightarrow\infty.\]
\end{proof}

\begin{proof}[Proof of Theorem \ref{thm:6} \ref{thm:6:III}] Under $H_0$, we prove $\mathcal{T}_2=O_P(1)$. 
Note that
$\mathbb{E}(\hat{p}_0-p_0)^2=\frac{p_0(1-p_0)}{N^{(m)}}$.
Hence, $\hat{p}_0=p_0(1+o_p(1))$ and $\mathcal{T}_2=T_n(1+o_p(1))$, where
\[T_n=\frac{\sum_{\text{$i_1,\ldots,i_{m+1}$ are pairwise distinct}}(A_{i_1\ldots i_m}-\hat{p}_0)(A_{i_2\ldots i_{m+1}}-\hat{p}_0)}{\sqrt{(m+1)!N^{(m+1)}p_0^2(1-p_0)^2}}.\]
Straightforward calculation yields
\begin{eqnarray*}\label{thm8eq2}
T_n&=&\frac{\sum_{\text{$i_1,\ldots,i_{m+1}$ are pairwise distinct}}(A_{i_1\ldots i_m}-p_0)(A_{i_2\ldots i_{m+1}}-p_0)}{\sqrt{(m+1)!N^{(m+1)}p_0^2(1-p_0)^2}}-\frac{\sqrt{(m+1)!N^{(m+1)}}}{\sqrt{p_0^2(1-p_0)^2}}(\hat{p}_0-p_0)^2\\  \nonumber
&=&\frac{\sum_{\text{$i_1,\ldots,i_{m+1}$ are pairwise distinct}}(A_{i_1\ldots i_m}-p_0)(A_{i_2\ldots i_{m+1}}-p_0)}{\sqrt{(m+1)!N^{(m+1)}p_0^2(1-p_0)^2}}-O_p\left(\frac{1}{N^{\frac{m-1}{2}}}\right)=T_{1n}+o_p(1).
\end{eqnarray*}
Note that the variance of $T_{1n}$ under $H_0$ is equal to one, then $T_{1n}=O_p(1)$ and hence $T_n=O_p(1)$.

Next, we show $T_n$ is unbounded in probability under $H_1$. 
By (\ref{unk1}), it follows that
\begin{eqnarray*}
&&\frac{\sqrt{(m+1)!N^{(m+1)}}}{\sqrt{p_0^2(1-p_0)^2}}(\hat{p}_0-p_0)^2\\
&=&O_P\left(\frac{1}{N^{\frac{m-1}{2}}}+\frac{N^{\frac{m+1}{2}}n^m(p_1-p_0^{\prime})^2}{N^{2m}p_0}\right)=O_P\left(\frac{1}{N^{\frac{m-1}{2}}}+\frac{(p_1-p_0^{\prime})}{N^{\frac{m-1}{2}}}(1-\frac{p_0^{\prime}}{p_0})\right)=o_p(1).
\end{eqnarray*}
Then $T_n=T_{1n}+o_p(1)$ under $H_1$. 

Next, we prove $T_{1n}$ is unbounded in probability.
For any given $S\subset [N]$ with $|S|=n$, we have
\begin{eqnarray*}
T_{1n}&=&\frac{\sum_{S,S}(A_{i_1\ldots i_m}-p_0)(A_{i_2\ldots i_{m+1}}-p_0)}{\sqrt{(m+1)!N^{(m+1)}p_0^2(1-p_0)^2}}+\frac{\sum_{\bar{S},S}(A_{i_1\ldots i_m}-p_0)(A_{i_2\ldots i_{m+1}}-p_0)}{\sqrt{(m+1)!N^{(m+1)}p_0^2(1-p_0)^2}}\\
&&+\frac{\sum_{S,\bar{S}}(A_{i_1\ldots i_m}-p_0)(A_{i_2\ldots i_{m+1}}-p_0)}{\sqrt{(m+1)!N^{(m+1)}p_0^2(1-p_0)^2}}+\frac{\sum_{\bar{S},\bar{S}}(A_{i_1\ldots i_m}-p_0)(A_{i_2\ldots i_{m+1}}-p_0)}{\sqrt{(m+1)!N^{(m+1)}p_0^2(1-p_0)^2}}\\
&=&T_{an}+T_{bn}+T_{cn}+T_{dn}.
\end{eqnarray*}

For $T_{dn}$, we have
\begin{eqnarray*}
T_{dn}&=&\frac{\sum_{\bar{S},\bar{S}}(A_{i_1\ldots i_m}-p_0^{\prime})(A_{i_2\ldots i_{m+1}}-p_0^{\prime})}{\sqrt{(m+1)!N^{(m+1)}p_0^2(1-p_0)^2}}+\frac{2(p_0^{\prime}-p_0)\sum_{\bar{S},\bar{S}}(A_{i_1\ldots i_m}-p_0^{\prime})}{\sqrt{(m+1)!N^{(m+1)}p_0^2(1-p_0)^2}}\\
&&+\frac{\sum_{\bar{S},\bar{S}}(p_0^{\prime}-p_0)^2}{\sqrt{(m+1)!N^{(m+1)}p_0^2(1-p_0)^2}}=T_{d1n}+T_{d2n}+T_{d3n}.
\end{eqnarray*}
The variance of $T_{dn}$ is bounded by
\[V_1(T_{dn})\leq V_1(T_{d1n})+V_1(T_{d2n})\leq \frac{(N^{m+1}-n^{m+1})p_0^{\prime 2}(1-p_0^{\prime})^2}{N^{m+1}p_0^2}+\frac{N^{m+2}(p_0^{\prime}-p_0)^2p_0^{\prime }(1-p_0^{\prime})}{N^{m+1}p_0^2}.\]

Since $T_{bn}$ and $T_{cn}$ are of the same order, we only consider $T_{bn}$.
\begin{eqnarray*}
T_{bn}&=&\frac{\sum_{\bar{S},S}(A_{i_1\ldots i_m}-p_0^{\prime})(A_{i_2\ldots i_{m+1}}-p_1)}{\sqrt{(m+1)!N^{(m+1)}p_0^2(1-p_0)^2}}+\frac{(p_1-p_0)\sum_{\bar{S},S}(A_{i_1\ldots i_m}-p_0^{\prime})}{\sqrt{(m+1)!N^{(m+1)}p_0^2(1-p_0)^2}}\\
&&+\frac{(p_0^{\prime}-p_0)\sum_{\bar{S},S}(A_{i_2\ldots i_{m+1}}-p_1)}{\sqrt{(m+1)!N^{(m+1)}p_0^2(1-p_0)^2}}+\frac{\sum_{\bar{S},S}(p_0^{\prime}-p_0)(p_1-p_0)}{\sqrt{(m+1)!N^{(m+1)}p_0^2(1-p_0)^2}}=T_{b1n}+T_{b2n}+T_{b3n}+T_{b4n}.
\end{eqnarray*}

The variance of $T_{bn}$ is bounded by
\begin{eqnarray*}
V_1(T_{bn})&\leq & V_1(T_{b1n})+V_1(T_{b2n})+V_1(T_{b3n})\leq 
\frac{(N-n)n^mp_0^{\prime }(1-p_0^{\prime})p_1(1-p_1)}{N^{m+1}p_0^2}\\
&+&\frac{Nn^{m+1}p_0^{\prime }(1-p_0^{\prime })(p_1-p_0)^2}{N^{m+1}p_0^2}+\frac{N^2n^{m}p_1(1-p_1)(p_0^{\prime }-p_0)^2}{N^{m+1}p_0^2}.
\end{eqnarray*}

For $T_{an}$, we have
\begin{eqnarray*}
T_{an}&=&\frac{\sum_{S,S}(A_{i_1\ldots i_m}-p_1)(A_{i_2\ldots i_{m+1}}-p_1)}{\sqrt{(m+1)!N^{(m+1)}p_0^2(1-p_0)^2}}+\frac{2(p_1-p_0)\sum_{S,S}(A_{i_1\ldots i_m}-p_1)}{\sqrt{(m+1)!N^{(m+1)}p_0^2(1-p_0)^2}}\\
&&+\frac{\sum_{S,S}(p_1-p_0)^2}{\sqrt{(m+1)!N^{(m+1)}p_0^2(1-p_0)^2}}=T_{a1n}+T_{a2n}+T_{a3n}.
\end{eqnarray*}

The variance of $T_{an}$ is bounded by
\begin{eqnarray*}
V_1(T_{an})&\leq & V_1(T_{a1n})+V_1(T_{a2n})\leq 
\frac{n^{m+1}p_1^2(1-p_1)^2}{N^{m+1}p_0^2}+\frac{n^{m+2}p_1(1-p_1)(p_1-p_0)^2}{N^{m+1}p_0^2}.
\end{eqnarray*}

By Lemma \ref{cyclelemma1}, the expectation of $T_{1n}$ under $H_1$ is
\begin{eqnarray*}
\mathbb{E}_1(T_{1n})&=&\frac{\sum_{\bar{S},\bar{S}}(p_0^{\prime}-p_0)^2+2\sum_{\bar{S},S}(p_0^{\prime}-p_0)(p_1-p_0)+\sum_{S,S}(p_1-p_0)^2}{\sqrt{(m+1)!N^{(m+1)}p_0^2(1-p_0)^2}}\\
&\asymp&\frac{n^{m+1}(p_1-p_0)^2}{\sqrt{(m+1)!N^{(m+1)}p_0^2(1-p_0)^2}}\\
&\asymp&\frac{(p_1-p_0^{\prime})^2}{p_0}\left(\frac{n^2}{N}\right)^{\frac{m+1}{2}}\rightarrow\infty.
\end{eqnarray*}

Let $V_1=V_1(T_{1n})$ under $H_1$. Then we have
\[T_{1n}=\sqrt{V_1}\left(\frac{T_{1n}-\mathbb{E}_1T_{1n}}{\sqrt{V_1}}+\frac{\mathbb{E}_1T_{1n}}{\sqrt{V_1}}\right).\]

If $V_1=O(1)$, then $T_{1n}=O_p(1)+\mathbb{E}_1(T_{1n})\rightarrow\infty$ in probability. If $p_1\asymp p_0^{\prime}$, it's easy to check that $V_1=O(1)$. In the following, we assume $V_1\rightarrow\infty$ and $p_1\gg p_0^{\prime}$.

Note that $V_1(T_{d2n})\leq V_1(T_{d1n})\leq 1$ and $V_1(T_{b3n})\ll V_1(T_{a2n})$.
Besides, if $p_0\asymp p_0^{\prime}$, then, 
\[V_1(T_{b1n})=
\frac{(N-n)n^mp_0^{\prime }(1-p_0^{\prime})p_1(1-p_1)}{N^{m+1}p_0^2}\leq \frac{n^m}{N^m}\frac{p_1}{p_0}=O(1).\]
If $p_0^{\prime}\ll (p_1-p_0^{\prime})\frac{n^m}{N^m}\asymp p_0$, then
\[V_1(T_{b1n})=
\frac{(N-n)n^mp_0^{\prime }(1-p_0^{\prime})p_1(1-p_1)}{N^{m+1}p_0^2}\asymp \frac{N^m}{n^m}\frac{p_0^{\prime}}{p_1}=O(1).\]
As a result, $V_1(T_{b1n})=O(1)$ and $V_1\rightarrow\infty$ implies 
\[
V_1=\max\{V_1(T_{b2n}), V_1(T_{a1n}), V_1(T_{a2n})\}\rightarrow\infty.
\]

If $V_1(T_{b2n})\gg V_1(T_{a1n}), V_1(T_{b2n})\gg V_1(T_{a2n})$, then $V_1=V_1(T_{b2n})\rightarrow\infty$ and
\[\frac{V_1(T_{b2n})}{V_1(T_{a1n})}\asymp\frac{Np_0^{\prime}(p_1-p_0)^2}{p_1^2}\asymp Np_0^{\prime}\rightarrow\infty.\]
In this case, 
\[T_{1n}=\sqrt{V_1}\left(\frac{T_{1n}-\mathbb{E}_1T_{1n}}{\sqrt{V_1}}+\frac{\mathbb{E}_1T_{1n}}{\sqrt{V_1}}\right)=\sqrt{V_1}\left(\frac{T_{b2n}}{\sqrt{V_1}}+o_p(1)\right)+\mathbb{E}_1T_{1n}.\]
By the central limit theorem, we conclude that $\frac{T_{b2n}}{\sqrt{V_1}}$ converges in distribution to $N(0,1)$ if $n^{m-1}Np_0^{\prime}\rightarrow\infty$. Consequently, $\mathbb{P}(|T_{1n}|>c)=1-o(1)$ for any fixed constant $c>0$.

If $V_1(T_{a2n})\gg V_1(T_{a1n}), V_1(T_{a2n})\gg V_1(T_{b2n})$, then $V_1=V_1(T_{a2n})\rightarrow\infty$ and
\[\frac{V_1(T_{a2n})}{V_1(T_{a1n})}\asymp\frac{n^{m+2}p_1(p_1-p_0)^2}{n^{m+1}p_1^2}\asymp np_1\rightarrow\infty.\]
In this case, 
\[T_{1n}=\sqrt{V_1}\left(\frac{T_{1n}-\mathbb{E}_1T_{1n}}{\sqrt{V_1}}+\frac{\mathbb{E}_1T_{1n}}{\sqrt{V_1}}\right)=\sqrt{V_1}\left(\frac{T_{a2n}}{\sqrt{V_1}}\right)(1+o_p(1))+\mathbb{E}_1T_{1n}.\]
By the central limit theorem, we conclude that $\frac{T_{a2n}}{\sqrt{V_1}}$ converges in distribution to $N(0,1)$ if $n^{m}p_1\rightarrow\infty$. Consequently, $\mathbb{P}(|T_{1n}|>c)=1-o(1)$ for any fixed constant $c>0$.

If $V_1(T_{a1n})\gg V_1(T_{a2n}), V_1(T_{a1n})\gg V_1(T_{b2n})$, then $V_1=V_1(T_{a1n})\rightarrow\infty$.
Under condition (\ref{ucond:1}), it's easy to verify that $n^{m-1}p_0^{\prime}> \frac{1}{\sqrt{N}}$. As a result, $p_0^{\prime}>\frac{1}{\sqrt{N}n^{m-1}}\gg \frac{1}{\sqrt{N}N^{\frac{m-1}{2}}}$ if $n=o(\sqrt{N})$. Consequently, $N^{m+1}p_0^2>N^{m+1}p_0^{\prime 2}\gg N\rightarrow\infty$. Hence, $V_1=V_1(T_{a1n})\rightarrow\infty$ implies $n^{m+1}p_1^2\rightarrow\infty$. In this case, 
\[T_{1n}=\sqrt{V_1}\left(\frac{T_{1n}-\mathbb{E}_1T_{1n}}{\sqrt{V_1}}+\frac{\mathbb{E}_1T_{1n}}{\sqrt{V_1}}\right)=\sqrt{V_1}\left(\frac{T_{a1n}}{\sqrt{V_1}}\right)(1+o_p(1))+\mathbb{E}_1T_{1n}.\]
Since $T_{a1n}$ can be expressed as a sum of martingale differences,
by the Martingale central limit theorem \cite{HH14}, we get that $\frac{T_{a1n}}{\sqrt{V_1}}$ converges in distribution to $N(0,1)$ if $n^{m+1}p_1^2\rightarrow\infty$ (See \cite{YN20} for a proof when $m=3$). Consequently, $\mathbb{P}(|T_{1n}|>c)=1-o(1)$ for any fixed constant $c>0$. 

If $ V_1(T_{a2n})\asymp V_1(T_{b2n})\gg V_1(T_{a1n})$, then 
\[\frac{V_1(T_{a2n})}{V_1(T_{a1n})}=np_1\asymp Np_0^{\prime}=\frac{V_1(T_{b2n})}{V_1(T_{a1n})}\rightarrow\infty.\]
In this case, $V_1=V_1(T_{b2n})$ is bounded since
\[V_1(T_{b2n})=\frac{Nn^{m+1}p_0^{\prime }(1-p_0^{\prime })(p_1-p_0)^2}{N^{m+1}p_0^2}\leq \frac{p_1^2p_0^{\prime }Nn^{m+1}}{N^{m+1}p_0^{\prime 2}}\asymp\frac{Nn^mp_1}{N^m}\ll\frac{p_1}{N^{\frac{m}{2}-1}} =o(1).\]

If $ V_1(T_{a1n})\asymp V_1(T_{b2n})\gg V_1(T_{a2n})$, then 
\[\frac{V_1(T_{a2n})}{V_1(T_{a1n})}=np_1=o(1),\  1\asymp Np_0^{\prime}=\frac{V_1(T_{b2n})}{V_1(T_{a1n})}.\]
In this case, $V_1=V_1(T_{b2n})$ is bounded since
\[V_1(T_{b2n})=\frac{Nn^{m+1}p_0^{\prime }(1-p_0^{\prime })(p_1-p_0)^2}{N^{m+1}p_0^2}\leq \frac{p_1^2p_0^{\prime }Nn^{m+1}}{N^{m+1}p_0^{\prime 2}}\ll\frac{Nn^mp_1}{N^m}\ll\frac{p_1}{N^{\frac{m}{2}-1}} =o(1).\]

If $ V_1(T_{a1n})\asymp V_1(T_{a2n})\gg V_1(T_{b2n})$, then 
\[\frac{V_1(T_{a2n})}{V_1(T_{a1n})}=np_1\asymp 1,\  o(1)=Np_0^{\prime}=\frac{V_1(T_{b2n})}{V_1(T_{a1n})}.\]
In this case, $n^{m+1}p_1^2\rightarrow\infty$ and 
\[T_{1n}=\sqrt{V_1}\left(\frac{T_{1n}-\mathbb{E}_1T_{1n}}{\sqrt{V_1}}+\frac{\mathbb{E}_1T_{1n}}{\sqrt{V_1}}\right)=\sqrt{V_1}\left(\frac{T_{a1n}+T_{a2n}}{\sqrt{V_1}}+o_p(1)\right)+\mathbb{E}_1T_{1n}.\]
Since $T_{a1n}+T_{a2n}$ can be expressed as a sum of martingale differences,
by the Martingale central limit theorem, we get that $\frac{T_{a1n}+T_{a2n}}{\sqrt{V_1}}$ converges in distribution to $N(0,1)$ if $n^{m+1}p_1^2\rightarrow\infty$. Consequently, $\mathbb{P}(|T_{1n}|>c)=1-o(1)$ for any fixed constant $c>0$. 

If $ V_1(T_{a1n})\asymp V_1(T_{a2n})\asymp V_1(T_{b2n})$, then 
\[\frac{V_1(T_{a2n})}{V_1(T_{a1n})}=np_1\asymp Np_0^{\prime}=\frac{V_1(T_{b2n})}{V_1(T_{a1n})}\asymp1.\]
In this case, $p_1\asymp\frac{1}{n}$ and $p_0^{\prime}\asymp\frac{1}{N}$. Consequently,
\[ V_1(T_{a1n})=
\frac{n^{m+1}p_1^2(1-p_1)^2}{N^{m+1}p_0^2}\leq \frac{n^{m+1}p_1^2}{N^{m+1}p_0^{\prime 2}}\asymp\frac{n^{m-2}}{N^{m-2}}=o(1).\]
Hence, $V_1=O(1)$. Proof is completed.
\end{proof}

\end{document}